\setlist[itemize]{itemsep=0pt,parsep=2pt,topsep=2pt}
\setlist[enumerate]{itemsep=0pt,parsep=2pt,topsep=2pt}
\newtheorem{theorem}{Theorem}[section]
\newtheorem{prop}{Proposition}[section]
\newtheorem{lemma}{Lemma}[section]
\newtheorem{assumption}{Assumption}[section]
\theoremstyle{remark}
\newtheorem{rem}{Remark}[section]
\theoremstyle{definition}
\newtheorem{definition}{Definition}[section]
\theoremstyle{plain}
\numberwithin{equation}{section}
 \def\A{\mathbb{A}}
 \def\X{\mathbb{X}}
 \def\E{\mathbb{E}}
 \def\R{\mathbb{R}}
 \def\C{\mathcal{C}}
 \def\P{\mathcal{P}}
 \def\B{\mathcal{B}}
 \def\0{\mathbf{0}}
 \def\1{\mathbf{1}}
 \def\ind{\mathbb{1}}
\def\Pr{\mathbf{P}}
\def\proj{\mathop{\text{\rm proj}}}
\def\Z{\mathbb{Z}}
\def\Q0{{\mathbb{Q}_0}}
\def\W{\mathcal{W}}
\def\wto{\mathop{\overset{\text{\tiny \it w}}{\rightarrow}}}
\def\L{L}
\def\Me{\mathbb{M}}
\def\Fn{\mathbb{F}}
\def\m0{{\it 0}}
\def\urho{\underline{\rho}}
\def\nI{\mathcal{I}}
\def\Pstar{\text{$\text{P}^*$}}
\def\S{\mathcal{S}}
\def\J{\mathcal{J}}
\def\myqed{\hfill \qed}
\newcommand{\overbar}[1]{\mkern 2.5mu\overline{\mkern-3.5mu#1\mkern-0.7mu}\mkern 0.7mu}
\begin{document} 
\markboth{Linear Programming for Average-Cost MDPs}{} 

\title{On Linear Programming for Constrained and Unconstrained Average-Cost Markov Decision Processes with Countable Action Spaces and Strictly Unbounded Costs}
\author{Huizhen Yu\thanks{RLAI Lab, Department of Computing Science, University of Alberta, Canada (\texttt{janey.hzyu@gmail.com})}}
\date{}
\maketitle

\begin{abstract}
We consider the linear programming approach for constrained and unconstrained Markov decision processes (MDPs) under the long-run average cost criterion, where the class of MDPs in our study have Borel state spaces and discrete countable action spaces. Under a strict unboundedness condition on the one-stage costs and a recently introduced majorization condition on the state transition stochastic kernel, we study infinite-dimensional linear programs for the average-cost MDPs and prove the absence of a duality gap and other optimality results. Our results do not require a lower-semicontinuous MDP model. Thus, they can be applied to countable action space MDPs where the dynamics and one-stage costs are discontinuous in the state variable. Our proofs make use of the continuity property of Borel measurable functions asserted by Lusin's theorem.
\end{abstract}

\bigskip
\bigskip
\bigskip
\noindent{\bf Keywords:}\\
Markov decision processes; Borel state space; countable action space; average cost; constraints\\
minimum pair; majorization condition; infinite-dimensional linear programs; duality 

\clearpage
\tableofcontents
\clearpage

\section{Introduction}

We consider discrete-time Markov decision processes (MDPs) with the long-run average cost criterion. Our focus will be on the linear programming (LP) approach, for a class of unconstrained and constrained MDPs that have Borel state spaces, discrete countable action spaces, and unbounded one-stage costs.

LP methods for average-cost MDPs have a long history and an extensive literature. For MDPs with finite state and action spaces, see e.g., \cite{DeF68,HoK79,HoK84,Kar83};  for countable state spaces and countable or compact action spaces, see \cite{Bor88,Bor94,HoL94,HuK97,Las94}; and for Borel state and action spaces, see \cite{HG98,HGL03,HL94,HL02,KNH00,Yam75}. The interested reader may also consult the books \cite{Alt99,FS02,HL96,HL99,Puterman94} and their references.
The third group of results deal with uncountably infinite state spaces and are most closely related to our work. In particular, using the theory of infinite-dimensional LP (Anderson and Nash \cite{AnN87}), Hern\'{a}ndez-Lerma and Lasserre \cite{HL94} (see also \cite[Chap.\ 12]{HL99}, \cite{HL02}) formulated a general LP framework for Borel space average-cost MDPs. They studied the relations between the values of the primal/dual linear programs and the minimum average cost of an MDP, proved the absence of a duality gap under certain continuity conditions on the MDP model, and related the solutions of the programs to stationary optimal policies and average cost optimality equations (ACOE) of the MDP.
Much earlier than \cite{HL94}, Yamada \cite{Yam75} considered linear programs for a special class of geometrically ergodic MDPs with compact Euclidean state/action spaces and proved duality results for these problems. Building on the work \cite{HL94}, Hern\'{a}ndez-Lerma and Gonz\'{a}lez-Hern\'{a}ndez \cite{HG98} provided additional results and generalizations. Extensions of the LP method to constrained average-cost problems were studied by Kurano et al.~\cite{KNH00} for compact spaces and by Hern\'{a}ndez-Lerma et al.~\cite{HGL03} for non-compact spaces.

Another line of research that is closely related to our work, as well as to the prior work on LP mentioned above, is the minimum pair approach for average-cost Borel space MDPs (\cite{HLe93,Kur89}, \cite[Chap.\ 5.7]{HL96}; see also the related convex analytic approach \cite{Bor88}). 
With this approach, one considers minimizing the average cost over all policies and initial distributions, and the interest is in the existence of an optimal pair of policy and initial distribution with the following structure. The policy is stationary, and the associated initial distribution is an invariant probability measure of the Markov chain induced by the policy. In this paper we shall call a pair with such a structure a ``stationary pair'' and if it attains the minimum average cost, a ``stationary minimum pair.'' The feasibility and solvability of the primal linear programs studied in the prior work mentioned earlier in fact depend on the existence of such pairs. Conversely, a stationary minimum pair, when it exists, can be found by solving a linear program in the space of invariant probability measures induced by stationary policies, thus providing a way to find a stationary optimal policy for a subset of states.%footnote starts
\footnote{For finite state and action MDPs, Denardo \cite{Den70} seems to be the first to recognize the relation between the solution of a certain linear program and a stationary minimum pair, and he proposed to find a stationary average-cost optimal policy in a multichain MDP by repeatedly solving those linear programs on subproblems with smaller state spaces. This procedure is not applicable in general when the state space is uncountably infinite, since the ``chain structure'' of an MDP in this case can be complicated and hard to analyze. For some results on LP for ``multichain'' Borel space MDPs, see~\cite{HG98}.
}
%footnote ends
In some cases, with further ergodicity and regularity conditions, one can also extend the policy to an optimal one over the entire state space and establish stronger optimality, including sample-path optimality, of the policy \cite{HLe93,Kur89,Las99,VAm99}.
 
Our work builds upon earlier research on the LP and minimum pair methods for average-cost MDPs mentioned above. In those prior results the action space is more general than the countable action space we deal with in this paper. However, except for \cite{Yam75}, all of those results assume a lower-semicontinuous MDP model. Namely, they require the one-stage cost functions to be lower semicontinuous and the state transition stochastic kernels to be (weakly) continuous (\cite{Yam75} involves different continuity conditions; see Remark~\ref{rmk-Yamada} for details). Our work does not require this assumption.

We recently introduced in \cite{Yu19-minp} a majorization condition on the state transition stochastic kernel to deal with Borel space MDPs that do not satify such continuity conditions. For the case of countable action spaces (with the discrete topology), we obtained the existence of a stationary minimum pair and other average-cost optimality results analogous to those for lower-semicontinuous MDPs given by \cite{HLe93,Kur89,Las99,VAm99}. The purpose of the majorization condition is to make use of Lusin's theorem on the continuity of Borel measurable functions \cite[Thm.~7.5.2]{Dud02}. Roughly speaking, we require the existence of finite Borel measures on the state space that can majorize certain sub-stochastic kernels created from the state transition stochastic kernel, at all admissible state-action pairs (see Assumption \ref{cond-pc-3}(M)). We then use those majorizing finite measures in combination with Lusin's theorem to extract arbitrarily large (according to a given finite measure) sets on which certain Borel measurable functions involved in our analysis have desired continuity properties. With this technique, we are able to avoid the lower-semicontinuous model assumption and obtain results in \cite{Yu19-minp} that can be applied to MDPs with discontinuous dynamics and one-stage costs, although the application range is currently limited to the case of countable action spaces.

The purpose of this work is to further analyze the implications of the majorization condition and Lusin's theorem in the LP context, for both unconstrained and constrained MDPs.
The main contributions of this paper are as follows.
\begin{enumerate}[leftmargin=0.7cm,labelwidth=!]
\item[(i)]  For unconstrained average-cost MDPs, under the strictly unbounded cost condition and the majorization condition (cf.\ Assumption~\ref{cond-pc-3}), we prove there is no duality gap between the primal and dual linear programs in an LP formulation (see Theorem~\ref{thm-1}). 
\item[(ii)] For constrained average-cost MDPs, under conditions similar to those in (i), we first prove the existence of a stationary optimal pair and a stationary lexicographically optimal pair (which are analogous to stationary minimum pairs for unconstrained MDPs), and we then prove the absence of a duality gap for an LP formulation (see Theorems~\ref{thm-4.1} and~\ref{thm-4.2}, respectively). 
\end{enumerate}
In addition, we also discuss the maximizing sequences of dual linear programs and their relation with certain versions of ACOE (see Prop.~\ref{prp-2} for unconstrained MDPs and Props.~\ref{prp-4.3}, \ref{prp-4.4} for constrained MDPs).
Our results for unconstrained (resp., constrained) MDPs given in this paper can be compared with some of the prior results in \cite[Chap.\ 12]{HL99} and \cite{HL02} (resp., \cite{HGL03} and \cite{KNH00}) for lower-semicontinuous models. 

While this paper focuses on the average cost criterion, the analysis we give, with minor changes, can also be applied to constrained (or multi-objective) discounted-cost MDPs similar to those studied in~\cite{FeS96,HG00,HR04}, for finding constrained optimal or Pareto optimal policies (for a given initial distribution) using the LP approach, in the case of countable action spaces. 
In a separate recent work \cite{Yu20} based on similar ideas, we introduced another majorization condition for MDPs where both the state and action spaces are Borel, and used the majorization condition instead of the commonly required continuity/compactness conditions to prove the average cost optimality inequalities via the vanishing discount factor approach. 

The rest of this paper is organized as follows. In Section~\ref{sec-2} we give background materials about the average-cost MDP model, some prior optimality results for the minimum pair approach, and an overview of linear programs in topological vector spaces. In Section~\ref{sec-3} we present our LP formulation and duality results for unconstrained MDPs. We then extend these results to constrained MDPs in Section~\ref{sec-4}. Proofs for the theorems in Sections~\ref{sec-3} and~\ref{sec-4} are given in Section~\ref{sec-5}.

\section{Preliminaries} \label{sec-2}
We start with some notations and basic definitions. For a topological space $X$, $\B(X)$ denotes the Borel $\sigma$-algebra on $X$, and $\P(X)$ denotes the set of probability measures on $\B(X)$. We will refer to nonnegative or signed measures on $\B(X)$ as Borel measures. A \emph{Borel space} (a.k.a.\ \emph{standard Borel space}) is a separable metrizable space that is homeomorphic to a Borel subset of some Polish space (i.e., a separable and completely metrizable space) \cite[Chap.\ 7]{bs}. Let $X$ and $Y$ be Borel spaces. A \emph{Borel measurable stochastic kernel on $Y$ given $X$} is a Borel measurable function from $X$ into $\P(Y)$, where the space $\P(Y)$ is endowed with the topology of weak convergence. We denote the stochastic kernel by $q(dy \,|\, x)$. When it is continuous on $X$, we call it a \emph{continuous stochastic kernel} (it is also called \emph{weakly continuous} or \emph{weak Feller} in the literature).
For the space $\P(X)$ or more generally, the space of finite Borel measures on $X$, besides the topology of weak convergence just mentioned, we shall also consider other topologies in the next section when these spaces appear in infinite-dimensional linear programs.

We now introduce average-cost MDPs and the minimum pair approach, after which we will briefly review infinite-dimensional linear programs in topological vector spaces.

\subsection{MDP Model, Average Cost Criterion, and Minimum Pair Approach} \label{sec-2.1}

We consider an MDP with state space $\X$ and action space $\A$, where $\X$ is a Borel space and $\A$ is a countable space endowed with the discrete topology.   
The control constraint is specified by a set-valued map $A: \X \to 2^{\A}$. In particular, each state $x \in \X$ is associated with a nonempty set $A(x) \subset \A$ of admissible actions, and the graph of the map $A(\cdot)$, 
$$\Gamma := \{(x, a) \mid x \in \X, a \in A(x)\},$$ 
is assumed to be a \emph{Borel subset} of $\X \times \A$. 
If an action $a \in A(x)$ is taken at state $x$, a one-stage cost $c(x,a)$ is incurred, followed by a probabilistic state transition. 
We assume that the state transition is governed by a \emph{Borel measurable} stochastic kernel $q(dy \,|\, x, a)$ on $\X$ given $\X \times \A$, and that the one-stage cost function $c: \X \times \A \to [0, +\infty]$ is \emph{nonnegative and Borel measurable}, real-valued on $\Gamma$, and taking the value $+\infty$ outside $\Gamma$.

A policy is a sequence of stochastic kernels on $\A$ that specify how to take actions at each stage, given the history up to that stage. 
More precisely, for infinite-horizon average cost problems that we consider, a \emph{Borel measurable policy} is an infinite sequence $\pi : =(\mu_0, \mu_1, \ldots)$ where for each $n \geq 0$,
$\mu_n\big(da_n \!\mid x_0, a_0, \ldots, a_{n-1}, x_n \big)$ is a Borel measurable stochastic kernel on $\A$ given $(\X \times \A)^{n} \times \X$ and obeys the control constraint of the MDP:
$$ \mu_n\big(A(x_n) \!\mid x_0, a_0, \ldots, a_{n-1}, x_n \big) = 1, \quad \forall \, (x_0, a_0, \ldots, a_{n-1}, x_n) \in (\X \times \A)^n \times \X.$$
Such a policy is called \emph{nonrandomized} if in the above every measure on $\A$ is a Dirac measure, and it is called \emph{stationary} if the function $(x_0, a_0, \ldots, a_{n-1}, x_n) \mapsto  \mu_n(d a_n \!\mid\! x_0,  a_0, \ldots, a_{n-1}, x_n)$ depends only on the state $x_n$, in the same way for every $n \geq 0$. In the stationary case, we can write the policy as $\pi = (\mu, \mu, \ldots)$ for a Borel measurable stochastic kernel $\mu(da \,|\, x)$ on $\A$ given $\X$ that obeys the control constraint of the MDP, and we will simply designate this policy by $\mu$. 

Let $\Pi$ denote the space of Borel measurable policies, and let $\Pi_s$ be the subset of all stationary policies in $\Pi$. Given that the action space $\A$ is countable, $\Pi$ and $\Pi_s$ are nonempty (see e.g., \cite[Sect.~2]{Yu19-minp}), and the Borel measurable policies will be adequate for our purpose---henceforth, we shall simply call them policies. We also note that although $\A$ is countable, in the above and throughout the paper, we write probability measures on $\A$ using the general notation for probability measures on a possibly uncountably infinite space, for notational simplicity.

\subsubsection{Average Cost Criterion and Minimum Pair}

In an MDP, a policy $\pi \in \Pi$ and an initial (state) distribution $\zeta \in \P(\X)$ induce a stochastic process $\{(x_n, a_n)\}_{n \geq 0}$ on the infinite product of state and action spaces, $(\X \times \A)^\infty$. The probability measure for this process is uniquely determined by the initial distribution $\zeta$, the sequence of stochastic kernels in $\pi$, and the state transition stochastic kernel $q(dy \,|\, x, a)$ \cite[Prop.~7.28]{bs}. We denote this probability measure by $\Pr^\pi_\zeta$ and the corresponding expectation operator by $\E^\pi_\zeta$. 
The long-run expected average cost of the policy $\pi$ for the initial distribution $\zeta$ is defined by
$$ J(\pi,\zeta) : = \limsup_{n \to \infty} \, n^{-1} \E^\pi_\zeta \big[ \, \textstyle{\sum_{k=0}^{n-1} c(x_k, a_k)} \, \big].$$ 
We shall also refer to $J(\pi, \zeta)$ as the average cost of the pair $(\pi, \zeta)$. 
With the minimum pair approach, we consider the average costs of all policy and initial distribution pairs, 
and among these pairs, we are especially interested in the types of pairs defined below.

Let $\rho^*$ be the \emph{minimum average cost} over all policies and initial distributions:
$$ \rho^* : = \inf_{\zeta \in \P(\X)} \inf_{\pi \in \Pi} J(\pi, \zeta).$$ 

%\smallskip
\begin{definition} \rm \label{def-minp}
A pair $(\pi^*, \zeta^*) \in \Pi \times \P(\X)$ with $J(\pi^*, \zeta^*) = \rho^*$ is called a \emph{minimum pair}. 
\end{definition}
%\smallskip

{\samepage
\begin{definition}[stationary pair and stationary minimum pair] \rm \hfill \label{def-sp}
\begin{enumerate}[leftmargin=0.65cm,labelwidth=!]
\item[(a)] For a stationary policy $\mu \in \Pi_s$ and an initial distribution $p \in \P(\X)$, if $p$ is an invariant probability measure of the Markov chain induced by $\mu$ on $\X$, we call $(\mu, p)$ a \emph{stationary pair}. 
The set of all stationary pairs is denoted by $\Delta_{s}$.
\item[(b)] If $(\mu^*, p^*) \in \Delta_s$ is a minimum pair, we call it a \emph{stationary minimum pair}.
\end{enumerate}
\end{definition}}
%\smallskip

\begin{rem} \rm 
Various terminologies are used in the literature for what we call a stationary pair $(\mu,p)$. In the references \cite{HGL03,HL94,HL99}, the policy $\mu$ is called a ``stable policy'' if $J(\mu, p) < \infty$. In the reference \cite{Bor94}, the probability measure $\gamma(d(x,a)) = \mu(da \,|\, x) \, p(dx)$ is called an ``ergodic occupation measure''---we will discuss such measures in Section~\ref{sec-3.1}.
\myqed
\end{rem}

\subsubsection{Model Assumptions and Existence of Stationary Minimum Pair}

We now impose additional conditions on the MDP model. For a set $B$ in some space, let $B^c$ denote its complement; for a set $B \subset \X \times \A$, let $\proj_\X(B)$ denote the projection of $B$ on $\X$. Recall that $\Gamma = \{(x, a) \mid x \in \X, a \in A(x)\}$. 

\begin{assumption} \label{cond-pc-3} \hfill
\begin{enumerate}[leftmargin=0.85cm]
\item[\rm (G)] For some $\pi \in \Pi$ and $\zeta \in \P(\X)$, the average cost $J(\pi,\zeta) < \infty$.
\item[\rm (SU)] There exists a nondecreasing sequence of compact sets $\Gamma_j \uparrow \Gamma$ such that
$$ \lim_{j \to \infty} \inf_{(x, a) \in \Gamma_j^c} c(x, a) = + \infty.$$
\item[\rm (M)] For each compact set $K \in \{\proj_\X(\Gamma_j)\}$, there exist an open set $O \supset K$, a closed set $D \subset \X$, and a finite measure $\nu$ on $\B(\X)$ (all of which can depend on $K$) such that
\begin{equation}
     q\big((O \setminus D) \cap B  \mid x, a \big) \leq \nu(B), \qquad \forall \, B \in \B(\X), \ (x, a) \in \Gamma, \label{eq-maj-minpair}
\end{equation}     
where the closed set $D$ (possibly empty) is such that restricted to $D \times \A$, the state transition stochastic kernel $q(dy\,|\, x, a)$ is continuous and the one-stage cost function $c(x, a)$ is lower semicontinuous.%footnote starts
\footnote{Since $\A$ is discrete, the continuity condition here means that for each action $a \in \A$, $q(dy\,|\, \cdot, a)$ and $c(\cdot,a)$ are continuous and lower semicontinuous, respectively, on the set $D$.}
%footnote ends 
\end{enumerate}
\end{assumption}
%\smallskip

The first two conditions in this assumption are standard: (G) excludes vacuous problems, and (SU) defines the case of \emph{strictly unbounded} one-stage costs. They were used in, e.g., \cite{HLe93,HL99,Las99,VAm99} to derive average-cost optimality and LP duality results for lower-semicontinuous MDP models with strictly unbounded costs. 

When the function $c(\cdot)$ is lower semicontinuous, (SU) is equivalent to $c(\cdot)$ being \emph{inf-compact} on $\Gamma$, i.e., $E_r : = \{(x,a) \in \Gamma \mid c(x,a) \leq r \}$ is compact for all $r \geq 0$. In our case, these sets $E_r$ need not be closed and instead, (SU) is equivalent to $E_r$ having compact closures. 
Note also that the set $\Gamma$ is $\sigma$-compact under (SU) and, since $\proj_\X(\Gamma_j) \uparrow \X$, the space $\X$ thus must also be $\sigma$-compact.

Condition (M) was introduced in our recent work~\cite{Yu19-minp}. We use the majorization property required in (M) instead of the lower-semicontinuity model conditions commonly required in the literature. The set $D$ in (M) is introduced to separate a ``continuous part'' of the model from the rest, in order to sharpen (M), although this condition can also be used with $D = \varnothing$. Condition (M) seems natural for problems where the probability measures $\{ q(\cdot \,|\, x,a) \mid (x,a) \in \Gamma \}$ have densities on $\X \setminus D$ with respect to (w.r.t.)~a common $\sigma$-finite reference measure and those density functions are bounded uniformly from above. For instance, if $\X = \R^n$ and the reference measure is the Lebesgue measure, we can take $\nu$ in (M) to be a multiple of the Lebesgue measure restricted to a bounded open set that contains $K$.
See \cite[Example~3.2 and Remark 3.3]{Yu19-minp} for more specific examples that illustrate situations where (M) is naturally satisfied or cannot be satisfied. 

Under the preceding assumption, the following results are proved in \cite{Yu19-minp} by making use of Lusin's theorem (see \cite[Thm.~3.5]{Yu19-minp} for sample-path and other optimality properties of a stationary minimum pair). They are analogous to the prior results for lower-semicontinuous MDPs \cite{HLe93,HL99,Kur89}, and they will serve as the starting point for the analyses we present in this paper. 

\begin{theorem}[{optimality of stationary pairs \cite[Prop.~3.2, Thm.~3.3]{Yu19-minp}}] \label{thm-2.1} \ \\
Under Assumption~\ref{cond-pc-3}, the following hold:
\begin{itemize}
\item[\rm (i)] For any pair $(\pi, \zeta) \in \Pi \times \P(\X)$ with $J(\pi, \zeta) < \infty$, there exists a stationary pair $(\bar \mu, \bar p) \in \Delta_{s}$ with $J(\bar \mu, \bar p) \leq J(\pi, \zeta)$.
\item[\rm (ii)] There exists a stationary minimum pair $(\mu^*, p^*) \in \Delta_{s}$.
\end{itemize}
\end{theorem}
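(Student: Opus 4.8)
The plan is to recast the average cost in terms of expected state--action occupation measures and pass to a weak limit; the obstacle throughout is that, in the absence of a lower-semicontinuous model, neither $c$ nor $q$ has the continuity needed to make such limits routine, and this is precisely where Assumption~\ref{cond-pc-3}(M) and Lusin's theorem do the work. For a pair $(\pi,\zeta)$ with $J(\pi,\zeta)<\infty$, I would introduce the normalized occupation measures $\bar\gamma_n \in \P(\X\times\A)$ defined by $\bar\gamma_n(B) = n^{-1}\sum_{k=0}^{n-1}\Pr^\pi_\zeta\{(x_k,a_k)\in B\}$, each supported on $\Gamma$, so that $\int c\,d\bar\gamma_n = n^{-1}\E^\pi_\zeta[\sum_{k=0}^{n-1}c(x_k,a_k)]$ and $J(\pi,\zeta)=\limsup_n\int c\,d\bar\gamma_n$. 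First I would prove tightness: along a subsequence realizing the $\limsup$, $\int c\,d\bar\gamma_n$ is bounded, so Markov's inequality and (SU) give $\bar\gamma_n(\Gamma_j^c)\le(\inf_{\Gamma_j^c}c)^{-1}\int c\,d\bar\gamma_n$, which tends to $0$ uniformly as $j\to\infty$; since the $\Gamma_j$ are compact this is tightness. By Prohorov's theorem a further subsequence satisfies $\bar\gamma_{n_k}\wto\gamma$ with $\gamma$ a probability measure concentrated on $\Gamma$.

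Second, I would show that $\gamma$ is the occupation measure of a stationary pair. Disintegrating $\gamma(d(x,a))=\bar p(dx)\,\bar\mu(da\,|\,x)$ (immediate since $\A$ is countable) produces a candidate stationary policy $\bar\mu$ that obeys the control constraint $\bar p$-a.e., and it remains to verify the flow identity $\bar p(B)=\int q(B\,|\,x,a)\,\gamma(d(x,a))$ for all $B\in\B(\X)$, which is equivalent to $\bar p$ being invariant under $\bar\mu$. This would come from passing to the limit in the one-step balance $\int f\,d(\bar\gamma_n)_{\X} - \int \big(\int f\,dq(\cdot\,|\,x,a)\big)\,d\bar\gamma_n = n^{-1}\big(\E^\pi_\zeta f(x_0)-\E^\pi_\zeta f(x_n)\big)$, where $(\bar\gamma_n)_{\X}$ is the state marginal and the right-hand side is $O(1/n)$ for bounded continuous $f$. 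In the lower-semicontinuous setting one simply uses that $(x,a)\mapsto\int f\,dq(\cdot\,|\,x,a)$ is continuous and bounded; here it is continuous only on $D\times\A$, so the limit in the second integral cannot be taken directly.

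This is the main obstacle, and I would resolve it exactly as envisioned by the majorization condition. Fix a compact $K=\proj_\X(\Gamma_j)$ and its associated $O,D,\nu$. On $D$ the kernel $q$ is continuous and $c$ is lower semicontinuous, so the classical passage to the limit applies verbatim there. The contribution of $O\setminus D$ is controlled by the bound $q\big((O\setminus D)\cap\,\cdot\,\big|\,x,a\big)\le\nu$: given $\epsilon>0$, Lusin's theorem furnishes a compact set $F$ with $\nu(\X\setminus F)<\epsilon$ on which the Borel functions entering the analysis (the integrand and the cost $c$) are continuous, and the \emph{uniform} domination by the single measure $\nu$ makes the error committed on $\X\setminus F$ small simultaneously for \emph{all} admissible $(x,a)$. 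Letting $\epsilon\downarrow0$ and $j\to\infty$ then yields both the flow identity above and the lower-semicontinuity estimate $\int c\,d\gamma\le\liminf_k\int c\,d\bar\gamma_{n_k}$. I expect this Lusin-based argument, together with the bookkeeping needed to make the error uniform over $(x,a)$, to be the genuinely hard part; it is the role for which (M) posits one majorizing measure valid at every pair in $\Gamma$. Since for a stationary pair the $\limsup$ collapses and $J(\bar\mu,\bar p)=\int c\,d\gamma$, combining with $\liminf_k\int c\,d\bar\gamma_{n_k}\le\limsup_n\int c\,d\bar\gamma_n=J(\pi,\zeta)$ completes~(i).

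Finally, (ii) would follow by applying (i) along a minimizing sequence. By (G) we have $\rho^*<\infty$; picking $(\pi_m,\zeta_m)$ with $J(\pi_m,\zeta_m)\to\rho^*$ and invoking (i) gives stationary pairs whose occupation measures $\gamma_m$ satisfy $\int c\,d\gamma_m\to\rho^*$. The same Markov/(SU) bound with a uniform cost ceiling makes $\{\gamma_m\}$ tight, so a weak limit $\gamma^*$ exists; re-running the (M)-and-Lusin argument shows $\gamma^*$ again satisfies the flow identity and that $\int c\,d\gamma^*\le\liminf_m\int c\,d\gamma_m=\rho^*$. Hence $\int c\,d\gamma^*=\rho^*$, and disintegrating $\gamma^*$ yields a stationary minimum pair $(\mu^*,p^*)\in\Delta_s$.
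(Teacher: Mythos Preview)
Your proposal is correct and follows essentially the same approach as the paper (which imports the proof from \cite{Yu19-minp} and recapitulates its structure in the proofs of Lemma~\ref{lem-sp-subv} and Theorem~\ref{thm-4.1}): occupation measures, tightness from (SU), Prohorov, disintegration of the weak limit, and then the Lusin/majorization argument to pass to the limit in both the flow identity and the cost integral, followed by a second pass over stationary pairs for part~(ii). The only technical refinements you should be aware of are that the paper works with the \emph{truncated} costs $c^m=\min\{c,m\}$ before invoking Lusin (so that the functions are bounded), applies Lusin separately to each of the finitely many actions in $\proj_\A(\Gamma_j)$, and uses the Tietze--Urysohn extension theorem to patch the continuous pieces on $(D\cup B)\times F$ into globally continuous/l.s.c.\ functions for which weak convergence can be invoked directly.
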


\subsection{Linear Programs in Topological Vector Spaces} \label{sec-2.2}
We now give a brief overview of topological vector spaces over the real field and infinite-dimensional linear programs in such spaces. The reader is referred to the books \cite{AnN87,RoR73} for in-depth studies of these subjects, and to the book \cite[Chap.~12.2]{HL99} for a more detailed introduction than ours. Here we shall focus on a few basic concepts and results that will be needed in this paper. 

Let $X$ and $Y$ be two (real) vector spaces, and let $\m0$ denote the element zero for both spaces. 
The pair $(X, Y)$ is called a \emph{dual pair} if there is a bilinear form $\langle \cdot, \, \cdot\rangle: X \times Y \to \R$ such that
\begin{itemize}[leftmargin=0.7cm,labelwidth=!]
\item for each $x \not=\m0$ in $X$, there exists some $y \in Y$ with $\langle x, \, y \rangle \not = 0$,
\item for each $y \not=\m0$ in $Y$, there exists some $x \in X$ with $\langle x, \, y \rangle \not = 0$.
\end{itemize}
For a dual pair $(X, Y)$, the coarsest topology on $X$ under which the function $|\langle \cdot, \, y\rangle|$ is continuous for every $y \in Y$ is called the \emph{weak topology} on $X$ determined by $Y$, and denoted by $\sigma(X, Y)$. By symmetry, $(Y, X)$ is also a dual pair and $\sigma(Y, X)$, the weak topology on $Y$ determined by $X$, is likewise defined.

We recall that a \emph{topological vector space} is a vector space with a topology that is compatible with its algebraic structure (namely, with that topology, the addition and multiplication operations are continuous; see \cite[Chap.~I.3]{RoR73}). 
When endowed with the weak topologies given above, each space in a dual pair $(X, Y)$ is a topological vector space that is separated (i.e., a Hausdorff space) and locally convex (i.e., every point in the space has a base of convex neighborhoods) \cite[Chap.~II.3]{RoR73}.
Convergence in $X$ under the weak topology $\sigma(X, Y)$ can be characterized as follows: a net $\{x_i\}_{i \in \nI}$ in $X$ converges to $\bar x \in X$ if and only if (iff)
$$ \langle x_i, \, y \rangle \to \langle \bar x, \, y \rangle, \qquad \forall \, y \in Y.$$

We consider equality-constrained linear programs and their dual linear programs in topological vector spaces. The definitions of these programs involve several objects, which we introduce first: 
\begin{itemize}[leftmargin=0.7cm,labelwidth=!]
\item two dual pairs of vector spaces $(X, Y)$ and $(Z, W)$, with each space endowed with its respective weak topology;
\item a linear mapping $\L : X \to Z$ that is required to be \emph{weakly continuous} (i.e., $\L$ is continuous under the topology $\sigma(X, Y)$ for $X$ and the topology $\sigma(Z, W)$ for $Z$);
\item a convex cone $\Lambda$ in $X$ and its \emph{dual cone} $\Lambda^*$ in $Y$ defined as
$$ \Lambda^* : = \big\{ y \in Y \mid \langle x, \, y \rangle \geq 0, \ \forall \, x \in \Lambda \big\}.$$
\end{itemize}
The convex cones $\Lambda$ and $\Lambda^*$ induce a partial ordering ``$\leq$" on $X$ and $Y$, respectively:
$$ x_1 \leq x_2 \ \ \text{iff} \ \ x_2 - x_1  \in \Lambda; \qquad   y_1 \leq y_2 \ \ \text{iff} \ \ y_2 - y_1  \in \Lambda^*. $$

The linear mapping $L$ appears in the constraints of a linear program designated as the primal program (P). Associated with $\L$ is another linear mapping $\L^*$ on the space $W$, called the \emph{adjoint} or \emph{transpose} of $\L$, that maps each $w \in W$ to a linear form $\L^*w$ on $X$ and is defined by the identity relation (where $\langle x, \L^* w \rangle$ stands for $(\L^*w)(x)$):
$$ \langle x, \L^* w \rangle : = \langle \L x, w \rangle, \quad \forall \, x \in X, \ w \in W.$$
An important property of $\L$ and $\L^*$ is given by the following proposition:

\begin{prop}[{\cite[Chap.\ II, Prop.~12 and its corollary]{RoR73}}] \label{prp-wcontinuity}
A linear mapping $\L: X \to Z$ is weakly continuous if and only if $\L^*(W) \subset Y$. If $\L$ is weakly continuous, so is $\L^*$.
\end{prop}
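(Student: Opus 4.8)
The plan is to reduce everything to the fundamental duality fact that, for a dual pair $(X,Y)$, the linear functionals on $X$ that are continuous under $\sigma(X,Y)$ are precisely those of the form $x \mapsto \langle x, \, y\rangle$ with $y \in Y$ (see \cite[Chap.~II]{RoR73}). Granting this, both directions of the equivalence follow quickly from the defining identity $\langle \L x, w\rangle = \langle x, \L^* w\rangle$.

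For the forward direction, suppose $\L$ is weakly continuous and fix $w \in W$. The functional $z \mapsto \langle z, \, w\rangle$ is $\sigma(Z,W)$-continuous by the very definition of the weak topology, so its composition with $\L$, namely $x \mapsto \langle \L x, \, w\rangle = \langle x, \L^* w\rangle$, is a $\sigma(X,Y)$-continuous linear functional on $X$. By the fundamental fact quoted above, this functional is represented by a unique element of $Y$, and that element is exactly $\L^* w$. Since $w \in W$ was arbitrary, $\L^*(W) \subset Y$.

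For the reverse direction, suppose $\L^*(W) \subset Y$. To check weak continuity of the linear map $\L$, it suffices to verify that $x \mapsto \langle \L x, \, w\rangle$ is $\sigma(X,Y)$-continuous for each $w \in W$, since the seminorms $|\langle \cdot, \, w\rangle|$, $w \in W$, generate the topology $\sigma(Z,W)$. But $\langle \L x, \, w\rangle = \langle x, \L^* w\rangle$ with $\L^* w \in Y$, and this is continuous by the definition of $\sigma(X,Y)$. Equivalently, using the net characterization recalled earlier: if $x_i \to \bar x$ in $\sigma(X,Y)$, then $\langle x_i, \L^* w\rangle \to \langle \bar x, \L^* w\rangle$ for every $w$, i.e., $\L x_i \to \L \bar x$ in $\sigma(Z,W)$. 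Hence $\L$ is weakly continuous.

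Finally, for the last assertion, assume $\L$ is weakly continuous, so that $\L^* : W \to Y$ is well defined and linear. By symmetry, $(Y, X)$ and $(W, Z)$ are again dual pairs, and a direct computation with the bilinear forms shows that the adjoint of $\L^*$ sends each $x \in X$ to the functional $w \mapsto \langle \L x, \, w\rangle$, which is represented by $\L x \in Z$; thus the adjoint of $\L^*$ maps $X$ into $Z$. Applying the forward direction already established, now to $\L^*$ in place of $\L$, yields that $\L^*$ is weakly continuous. The only genuinely nontrivial ingredient is the quoted duality theorem identifying the $\sigma(X,Y)$-continuous linear functionals with $Y$; the rest is bookkeeping with the identity $\langle \L x, w\rangle = \langle x, \L^* w\rangle$ and the symmetry of the dual-pair construction, and I expect no obstacle beyond carefully tracking which bilinear form is in play in this last step.
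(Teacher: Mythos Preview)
Your proof is correct and is the standard argument. Note, however, that the paper does not give its own proof of this proposition: it is stated as a citation of \cite[Chap.~II, Prop.~12 and its corollary]{RoR73} and used as a black box, so there is no in-paper proof to compare against. What you have written is essentially the textbook argument one finds in \cite{RoR73}---reduce to the identification of $\sigma(X,Y)$-continuous linear functionals with elements of $Y$, and then use the defining identity $\langle \L x, w\rangle = \langle x, \L^* w\rangle$ in both directions, with the final assertion following by symmetry of the dual-pair construction and the observation that $(\L^*)^* = \L$.
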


This proposition gives a convenient way to verify whether a linear mapping is weakly continuous or not.
When $\L$ is weakly continuous, with the weakly continuous mapping $\L^*: W \to Y$, one can define the dual of the primal linear program.

Let $c \in Y$ and $b \in Z$. Consider the following equality-constrained primal linear program (P) in the space $X$ and its dual linear program (\Pstar) in the space $W$ (cf.~\cite[Chap.~3.3]{AnN87}):
\begin{align}
   \text{(P)} \quad \qquad \text{minimize}  \ \ \ &  \langle \, x,  c \, \rangle \notag \\
                 \text{subject to} \ \ \ & \L x = b, \quad  x \in \Lambda.  \qquad \qquad \label{eq-P0} \\
   \text{(\Pstar)} \quad \ \quad \text{maximize}  \ \ \ &  \langle \, b, w  \,\rangle \notag \\
                 \text{subject to} \ \ \ & - \L^*w + c \in \Lambda^*, \quad w \in W. \label{eq-dP0}
\end{align} 
Similarities between these programs and standard finite-dimensional linear programs can be seen by writing the constraints $x \in \Lambda$ and $- \L^*w + c \in \Lambda^*$ equivalently as $x \geq \m0$ and $\L^* w \leq c$, respectively. 

If the program (P) or (\Pstar) has a feasible solution, it is said to be \emph{consistent}; if it admits an optimal solution, it is said to be \emph{solvable}. 
Let $\inf(\text{P})$ and $\sup  (\Pstar)$ denote the values of (P) and (\Pstar), respectively.
The elementary duality theory (cf.\ \cite[Chap.\ 3.3]{AnN87}) asserts that if (P) and (\Pstar) are both consistent, then 
$$ \sup (\Pstar) \leq \inf (\text{P}).$$
If the equality $\sup (\Pstar) = \inf (\text{P})$ holds, we say there is \emph{no duality gap}. 

There are several sufficient conditions for the absence of a duality gap. 
For our purpose, one duality theorem---Theorem~\ref{thm-subc} below from \cite{AnN87}---will be the most important. 
It characterizes the relation between the value of (\Pstar) and the subvalue of (P), which is defined as follows.

Consider the set $H \subset Z \times \R$ defined by
\begin{equation} \label{eq-H}
  H : = \big\{ \big(\L x, \, \langle \, x, c \, \rangle + r \big) \, \big| \, x \in \Lambda, \ r \geq 0 \big\}.
\end{equation} 
Let $\overbar{H}$ denote the closure of $H$ in the weak topology $\sigma(Z \times \R, \, W \times \R)$ (corresponding to the dual pair $(Z \times \R, \, W \times \R)$ with the bilinear form $\langle (z,r), \, (w, r') \rangle = \langle z, \, w \rangle + r r'$).
We call (P) \emph{subconsistent} if there exists some $r \in \R$ with $(b, r) \in \overbar{H}$.
When (P) is subconsistent, the \emph{subvalue} of (P) is defined by
$$ \text{subvalue} (\text{P}) : = \inf \big\{ r \, \big| \, \big( b, r \big) \in \overbar{H} \, \big\}.$$
For comparison, note that $\inf (\text{P}) = \inf \big\{ r \, \big| \, \big( b, r \big) \in H  \big\}$.
Note also that if $\urho$ is the subvalue of (P), then by the definition of the closure $\overbar{H}$, 
$\big(b,  \urho \big) \in \overbar{H}$ and there exists some net $\{x_i\}_{i \in \nI}$ with $x_i \in \Lambda$ for all $i$, such that $L x_i \to b$ and $\langle x_i, \, c \rangle \to \urho$, where $x_i$ need not be feasible for (P).

\begin{theorem}[subconsistency and duality~{\cite[Thm.~3.3]{AnN87}}] \label{thm-subc}
(P) is subconsistent with a finite subvalue $\urho$ if and only if (\Pstar) is consistent with a finite value $\urho$.
\end{theorem}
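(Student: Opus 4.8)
The plan is to prove both implications through the Hahn--Banach separation theorem applied to the closed convex cone $\overbar{H}$, using the fact that, for the weak topology $\sigma(Z\times\R,\,W\times\R)$, the continuous linear functionals on $Z\times\R$ are exactly the elements of $W\times\R$. First I would record the structural facts I need. The set $H$ of \eqref{eq-H} is a convex cone that contains the origin (take $x=\m0$, $r=0$) and the vertical direction $(\m0,1)$ (take $x=\m0$, $r=1$); hence $\overbar{H}$ is a closed convex cone invariant under adding $(\m0,t)$ for $t\ge 0$. Consequently the fiber $\{r\in\R : (b,r)\in\overbar{H}\}$ is closed (as the preimage of $\overbar{H}$ under $r\mapsto(b,r)$) and upward closed, so whenever (P) is subconsistent this fiber equals $[\urho,\infty)$ and in particular $(b,\urho)\in\overbar{H}$, where $\urho$ is the subvalue.

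Next I would establish weak duality. For any $w$ feasible for (\Pstar), i.e.\ $c-\L^*w\in\Lambda^*$, the adjoint identity $\langle \L x,w\rangle=\langle x,\L^*w\rangle$ gives, for every $(z,s)=(\L x,\langle x,c\rangle+r)\in H$ with $x\in\Lambda$ and $r\ge0$, that $\langle z,w\rangle-s=-\langle x,\,c-\L^*w\rangle-r\le 0$; thus the functional $(w,-1)$ is nonpositive on $H$ and hence on $\overbar{H}$. Evaluating at any $(b,r)\in\overbar{H}$ yields $\langle b,w\rangle\le r$, so $\sup(\Pstar)\le\text{subvalue}(\text{P})$. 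For the ``only if'' direction, assume (P) is subconsistent with finite subvalue $\urho$, so $(b,\urho)\in\overbar{H}$. Fix any $r_0<\urho$; then $(b,r_0)\notin\overbar{H}$, and separation of this point from the cone produces $(w',\theta)\in W\times\R$ with $\langle z,w'\rangle+\theta s\ge 0$ on $\overbar{H}$ and $\langle b,w'\rangle+\theta r_0<0$. Testing on $(\m0,1)$ gives $\theta\ge0$, and $\theta=0$ is impossible, since it would force $\langle b,w'\rangle\ge0$ (from $(b,\urho)\in\overbar{H}$) and $\langle b,w'\rangle<0$ simultaneously. Normalizing $\theta=1$ and testing on $(\L x,\langle x,c\rangle)$ shows $\langle x,\,\L^*w'+c\rangle\ge0$ for all $x\in\Lambda$, i.e.\ $w:=-w'$ is feasible for (\Pstar), while $\langle b,w\rangle>r_0$. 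Letting $r_0\uparrow\urho$ gives $\sup(\Pstar)\ge\urho$, which with weak duality yields consistency and value exactly $\urho$.

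For the ``if'' direction, assume (\Pstar) is consistent with finite value $\urho=\sup(\Pstar)$, and fix any $r_0>\urho$. Suppose $(b,r_0)\notin\overbar{H}$ and separate again to obtain $(w',\theta)$ with $\theta\ge0$. If $\theta>0$, normalizing as before yields a feasible $w=-w'$ with $\langle b,w\rangle>r_0>\urho$, contradicting $\sup(\Pstar)=\urho$. If $\theta=0$, then $\langle x,\L^*w'\rangle\ge0$ for all $x\in\Lambda$ (so $\L^*w'\in\Lambda^*$) and $\langle b,w'\rangle<0$; taking any feasible $\bar w$ and using that $\Lambda^*$ is a convex cone, $\bar w-t w'$ remains feasible for all $t\ge0$ while $\langle b,\bar w-t w'\rangle\to+\infty$, contradicting finiteness of the value. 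Hence $(b,r_0)\in\overbar{H}$ for every $r_0>\urho$, which shows (P) is subconsistent with $\text{subvalue}(\text{P})\le\urho$; weak duality then forces equality.

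The main obstacle, in both directions, is controlling the vertical component $\theta$ of the separating functional, since the degenerate case $\theta=0$ must either be excluded or exploited. In the ``only if'' direction it is ruled out using subconsistency ($(b,\urho)\in\overbar{H}$), whereas in the ``if'' direction it is precisely the case that certifies unboundedness of (\Pstar) along a recession ray, for which the convex-cone structure of $\Lambda^*$ (closure under sums and nonnegative scaling) is essential to preserve feasibility. A secondary point to verify carefully is that the separating functional genuinely lies in $W\times\R$, which is exactly why the separation is carried out inside the weak topology $\sigma(Z\times\R,\,W\times\R)$ rather than in an arbitrary locally convex topology.
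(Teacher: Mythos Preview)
The paper does not prove this statement at all: Theorem~\ref{thm-subc} is quoted as a background result from Anderson and Nash~\cite[Thm.~3.3]{AnN87} and is used as a black box in the proofs of Theorems~\ref{thm-1} and~\ref{thm-4.2}. So there is no ``paper's own proof'' to compare against.

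That said, your argument is correct and is essentially the standard proof (and, in outline, the one in~\cite{AnN87}). The structural observations---$H$ is a convex cone containing $(\m0,1)$, so $\overbar{H}$ is a closed convex cone with an upward-closed fiber over $b$---are right, and your weak-duality computation $\langle b,w\rangle\le r$ for all $(b,r)\in\overbar{H}$ is the key lemma. In the separation steps, the point that the continuous dual of $(Z\times\R,\sigma(Z\times\R,W\times\R))$ is exactly $W\times\R$ is what makes the separating functional usable, and your handling of the degenerate case $\theta=0$ in each direction (ruling it out via $(b,\urho)\in\overbar{H}$ for ``only if''; exploiting it as a recession direction contradicting finiteness of $\sup(\Pstar)$ for ``if'') is precisely where the substance lies. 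One small point worth making explicit is that the separating functional can be taken nonnegative on $\overbar{H}$ with value $0$ at the origin because $\overbar{H}$ is a cone containing $\m0$; you use this implicitly when you write $\langle z,w'\rangle+\theta s\ge 0$ rather than $\ge\alpha$ for some unspecified $\alpha$.
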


%\smallskip
We will apply this theorem in analyzing the duality relationship between the primal and dual linear programs for average-cost MDPs.

\section{Linear Programming for Average-Cost MDPs} \label{sec-3}

In this section we study the LP approach for the average-cost MDP under Assumption~\ref{cond-pc-3}. Roughly speaking, the primal linear program (P) is formulated to find a stationary minimum pair among the stationary pairs of the average cost MDP---this is viable since under Assumption~\ref{cond-pc-3}, the set of stationary pairs is nonempty and a stationary minimum pair exists (cf.\ Theorem~\ref{thm-2.1}). The dual linear program (\Pstar) is then determined by the primal program and the two dual pairs of vector spaces involved in the formulation (cf.\ Section~\ref{sec-2.2}). 
We present the LP formulation and our main duality results in Sections~\ref{sec-3.1} and~\ref{sec-3.2}, respectively. (The proofs of the theorems are given in Section~\ref{sec-5}.) 

Our formulation of the primal linear program is the same as that given by the prior work \cite[Chap.\ 12.3]{HL99}. But our dual program formulation is different; it avoids a condition on the state transition stochastic kernel used in \cite[Chap.\ 12.3]{HL99}, without affecting the desired duality result (cf.\ Remark~\ref{rem-LP}).
This LP formulation we present is one instance of a general class of formulations discussed in the prior work \cite[Sect.~4]{HL94}; however, for the sake of completeness, we will give a detailed account of it using the terminologies introduced in Section~\ref{sec-2.2}.

Regarding notations, in what follows, $\R_+$ denotes the set of nonnegative numbers. For $X = \X$ or $\Gamma$, $\Me(X)$ denotes the space of finite signed Borel measures on $X$, and $\Fn(X)$ the set of real-valued Borel measurable functions on $X$.
We write $\Me^+(X)$ or $\Fn^+(X)$ for the subset of those nonnegative elements in $\Me(X)$ or $\Fn(X)$, and we will use similar notations for the subspaces of $\Me(X)$ or $\Fn(X)$.

For the one-stage cost function $c(\cdot)$, we will also need to work with its restriction to the set $\Gamma$ of state and admissible action pairs (on which $c(\cdot)$ is finite as we recall). For notational simplicity, we shall use the same notation $c$ or $c(\cdot)$ for the restriction of $c(\cdot)$ to $\Gamma$, and the context will make it clear which function is involved in the discussion. 
Likewise, for a Borel measure $\gamma$ on $\Gamma$, sometimes we will also need to work with its extension to the whole state-action space $\X \times \A$, which is simply a Borel measure concentrated on $\Gamma$, and conversely, if $\gamma$ is a Borel measure on $\X \times \A$ concentrated on $\Gamma$, sometimes we will need to consider its restriction to $\Gamma$. In such cases, for notational simplicity, we will use the same notation $\gamma$ for both measures. 

\subsection{Primal and Dual Linear Programs} \label{sec-3.1}

For a Borel measure $\gamma$ on $\Gamma$, let $\hat \gamma$ denote the marginal of $\gamma$ on $\X$.
To define minimization problems on stationary pairs in an MDP, let us first explain a well-known (many-to-one) correspondence between 
a stationary pair $(\mu, p) \in \Delta_s$ and a Borel probability measure $\gamma$ on $\Gamma$ that satisfies 
\begin{equation} \label{eq-inv-gamma}
   \hat \gamma (B) = \int_{\Gamma}  q(B \mid x,a)  \,  \gamma(d(x,a)), \qquad \forall \, B \in \B(\X).
\end{equation}   
The correspondence is essentially given by
\begin{equation} \label{eq-gamma-mup}
\gamma(d(x,a)) = \mu(da \mid x) \, p(dx),
\end{equation}
and has the property that
\begin{equation}  \label{eq-J-gamma}
  J(\mu, p) = \int c \, d\gamma.
\end{equation}  
Indeed, for $(\mu, p) \in \Delta_s$, as $p$ is an invariant probability measure on $\X$ induced by $\mu$, we have
\begin{equation} \label{eq-inv-p}
  p(B) =  \int_{\X} \int_{\A}  q(B \mid x,a)  \, \mu(da \,|\, x) \, p(dx), \qquad \forall \, B \in \B(\X).
\end{equation}  
This is the same as (\ref{eq-inv-gamma}) for the probability measure $\gamma$ given by (\ref{eq-gamma-mup}), since the marginal of $\gamma$ is $\hat \gamma = p$ and $\mu$ obeys the control constraint of the MDP. 
The equality (\ref{eq-J-gamma}) follows from the definition of the average cost and the stationarity of the Markov chain under $\mu$ when the initial distribution is $p$. Conversely, given a probability measure $\gamma$ satisfying (\ref{eq-inv-gamma}), by \cite[Cor.~7.27.2]{bs}, we can decompose $\gamma$ as in (\ref{eq-gamma-mup}) with $p = \hat \gamma$ and $\mu(da \,|\, x)$ being a Borel measurable stochastic kernel  on $\A$ given $\X$ that obeys the control constraint of the MDP.
Then, since $\gamma$ satisfies (\ref{eq-inv-gamma}), the pair $(\mu, p)$ with $p=\hat \gamma$ satisfies (\ref{eq-inv-p}), which means that $p$ is invariant for the Markov chain induced by $\mu$ and hence $(\mu, p)$ is a stationary pair. The policy $\mu$ here is in general not unique; however, by stationarity, every $(\mu, p)$ from this decomposition of $\gamma$ has the same average cost (\ref{eq-J-gamma}).

Due to this correspondence between $(\mu,p)$ and $\gamma$, finding a stationary minimum pair can be expressed as an optimization problem in which one minimizes $\int c \, d\gamma$ over the set of probability measures $\gamma$ that satisfy (\ref{eq-inv-gamma}) (a.k.a.\ the set of ``ergodic occupation measures'' \cite{Bor94}).

Before expressing this optimization problem as a linear program, we also need to restrict attention to those stationary pairs that have finite average costs, so that $\infty$ does not appear in the objective function and the constraints. 
The following definitions are introduced for this purpose.
Consider a positive weight function $w: \Gamma \to \R_+$,
$$ w(x,a) : = 1 + c(x,a), \qquad (x,a) \in \Gamma.$$ 
Let
$\Me_w(\Gamma)$ be the set of finite, signed Borel measures on $\Gamma$ w.r.t.\ which the function $w$ is integrable:
$$ \Me_w(\Gamma): = \textstyle{ \big\{ \gamma \in \Me(\Gamma)  \,\big|\,  \int w \, d |\gamma| < \infty \big\}}$$
where $|\gamma|$ denotes the total variation of $\gamma$.
Let $\Fn_w(\Gamma)$ be the set of Borel measurable functions $\phi$ on $\Gamma$ such that 
$$ | \phi | \leq \ell \, w \quad \text{for some} \  \ell > 0.$$ 
Then every $\phi \in \Fn_w(\Gamma)$ is integrable w.r.t.\ all $\gamma \in \Me_w(\Gamma)$. By (\ref{eq-J-gamma}) and the definition of $w(\cdot)$, if a stationary pair $(\mu, p)$ has finite average cost, then the corresponding probability measure $\gamma \in \Me_w(\Gamma)$. 

We are now ready to define the primal and dual linear programs for the average-cost MDP. Let us specialize the programs (P) and (\Pstar) defined in Section~\ref{sec-2.2}, by identifying the objects involved in those programs as follows:
\begin{itemize}[leftmargin=0.7cm,labelwidth=!]
\item The dual pair $(X, Y) = \big(\Me_w(\Gamma), \Fn_w(\Gamma)\big)$, with the bilinear form 
$$   \langle \gamma \,,\, \phi \rangle : = \int_\Gamma \phi \, d \gamma, \qquad \gamma \in \Me_w(\Gamma), \ \phi \in \Fn_w(\Gamma).$$
\item The dual pair $(Z, W) = \big(\R \times \Me(\X), \, \R \times \Fn_b(\X) \big)$, where
$\Me(\X)$ is the set of finite signed Borel measures on $\X$ as defined earlier, $\Fn_b(\X)$ is the set of bounded Borel measurable functions on $\X$, and the bilinear form on $\big( \R \times \Me(\X) \big) \times \big( \R \times \Fn_b(\X) \big)$  is defined as 
$$ \big\langle (r, \zeta) \,,\, (\rho, h) \big\rangle : = r \rho + \int_\X h \, d \zeta, \qquad (r, \zeta) \in \R \times \Me(\X), \  (\rho, h) \in \R \times \Fn_b(\X).$$
\item The convex cone $\Lambda = \Me_w^+(\Gamma)$, the subset of nonnegative measures in $\Me_w(\Gamma)$. The dual cone of $\Lambda$ is $\Lambda^* = \Fn^+_w(\Gamma)$, the subset of nonnegative functions in $\Fn_w(\Gamma)$.
\item The objective function of the primal program (P) is $\langle \, \gamma, c \, \rangle$, and 
the feasible set of (P) is defined by the following constraints:
\begin{equation} \label{eq-P-feas}
  \gamma \in \Me_w^+(\Gamma), \qquad \gamma(\Gamma) = 1, \qquad \hat \gamma (B) = \int_{\Gamma}  q(B \mid x,a)  \,  \gamma(d(x,a)), \quad \forall \, B \in \B(\X),
\end{equation}  
where $\hat \gamma$ is the marginal of $\gamma$ on $\X$, as we recall.
In other words, in accordance with the earlier discussion, the feasible solutions of (P) correspond to those stationary pairs with finite average costs, and the objective is to minimize the average cost over them.
In the form of (P) discussed in Section~\ref{sec-2.2}, the two equality constraints in (\ref{eq-P-feas}) can be written as 
$$ \L \gamma  = b : = (1, \m0).$$
Here $\m0$ is the trivial measure on $\X$ (i.e., $\m0(B) \equiv 0$ for all $B \in \B(\X)$), and 
the linear mapping $\L$ is defined as $\L:  \Me_w(\Gamma) \to \R \times \Me(\X)$ with $\L = (\L_0, \L_1)$ where, for $\gamma \in \Me_w(\Gamma)$,
\begin{align} 
  \L_0 \gamma  & : = \gamma(\Gamma),  \\
  (\L_1 \gamma)(B) & : = \hat \gamma (B) - \int_{\Gamma}  q(B \mid x,a)  \,  \gamma(d(x,a)), \qquad \forall \, B \in \B(\X).
\end{align} 
\item From the identity $\big\langle \gamma, L^*(\rho, h) \big\rangle = \big\langle L \gamma, (\rho, h) \big\rangle$, the adjoint $\L^*$ of $L$ is given by the linear mapping that maps each $(\rho, h) \in \R \times \Fn_b(\X)$ to the function
\begin{equation}
   \L^*(\rho, h)(x,a) : = \rho + h(x) - \int_\X h(y)  \, q(dy \mid x, a), \qquad (x, a) \in \Gamma.
\end{equation}
Since $\L^*\big(\R \times \Fn_b(\X)\big) \subset \Fn_w(\Gamma)$, both $L$ and $L^*$ are weakly continuous (\cite[Chap.\ II, Prop.~12 and its corollary]{RoR73}; see also Prop.~\ref{prp-wcontinuity}). The inequality constraint in the program (\Pstar) is 
$$- \L^*(\rho, h) + c \in \Fn^+_w(\Gamma).$$ 
We can write this constraint as $\L^*(\rho, h) \leq c$ or more explicitly, as
\begin{equation}
  \rho + h(x) - \int_\X h(y)  \, q(dy \mid x, a) \leq c(x,a), \qquad \forall \, (x, a) \in \Gamma.
\end{equation}
The objective function of the dual program (\Pstar) is $\big\langle b, \, (\rho, h) \big\rangle = \big\langle (1, \m0) , \, (\rho, h) \big\rangle  = \rho$.
\end{itemize}
\smallskip

Expressed in the form introduced in Section~\ref{sec-2.2}, the primal and dual linear programs for the average-cost MDP are:
\begin{align}
   \text{(P)} \quad \qquad \text{minimize}  \ \ \ &  \langle \, \gamma, c \, \rangle \notag \\
                 \text{subject to} \ \ \ & \L\gamma = (1, \m0), \quad  \gamma \in \Me^+_w(\Gamma),   \qquad \ \ \label{eq-P}
\end{align} 
and
\begin{align}
   \text{(\Pstar)} \quad \qquad \text{maximize}  \ \ \ &  \rho \notag \\
                 \text{subject to} \ \ \ & \L^*(\rho, h) \leq c, \quad \rho \in  \R, \ h \in \Fn_b(\X). \label{eq-dP}
\end{align} 
As mentioned earlier, our formulation of (\Pstar) is different from the one given in the book \cite[Chap.~12.3]{HL99}. We will explain the difference and the reason for it in detail in the next subsection (see Remark~\ref{rem-LP}).

A few properties of (P) and (\Pstar) are easy to see. From the relation between stationary pairs and feasible solutions of the primal program (P), it is clear that under Assumption~\ref{cond-pc-3}, the existence of a stationary minimum pair (cf.\ Theorem~\ref{thm-2.1}(ii)) ensures that (P) is both consistent and solvable. Moreover, the proof of Theorem~\ref{thm-2.1}(ii) (cf.\ \cite{Yu19-minp}) shows that due to the strict unbounedness of the one-stage costs, if $\{\gamma_n\}$ is a sequence of feasible solutions of (P) with $\langle \gamma_n, c \rangle \downarrow \inf(\text{P}) = \rho^*$ (such a sequence is called a \emph{minimizing sequence} of (P)), then any subsequence of $\{\gamma_n\}$ contains a further subsequence that converges to an optimal solution of (P) in the topology of weak convergence (of probability measures). The consistency of the dual program (\Pstar) is trivial: since $c \geq 0$, a feasible solution is given by $\rho = 0$ and $h(\cdot) \equiv 0$. We then have $0 \leq \sup (\Pstar) \leq \inf (\text{P}) = \rho^*$ under Assumption~\ref{cond-pc-3}.

Next, we will address the duality between (P) and (\Pstar). 
We will also examine a connection between (\Pstar) and the ACOE for the MDP, through a \emph{maximizing sequence} of (\Pstar).  Such a sequence is defined as a sequence $\{(\rho_n, h_n)\}$ of feasible solutions of (\Pstar) with the property that $\rho_n \uparrow \sup (\Pstar)$.

\subsection{Optimality Results and Discussion} \label{sec-3.2}

Our main result of this section is the absence of a duality gap stated in part (ii) of the following theorem. It can be compared with the prior result of \cite[Chap.~12.3, Thm.~12.3.4]{HL99} for average-cost lower-semicontinuous MDPs. In our case, without lower-semicontinuity model assumptions, we will use Lusin's theorem together with the majorization property in Assumption~\ref{cond-pc-3}(M) to prove it.

\begin{theorem}[consistency and absence of a duality gap] \label{thm-1}
Under Assumption~\ref{cond-pc-3}, the linear programs (P) and (\Pstar) in (\ref{eq-P})-(\ref{eq-dP}) satisfy the following:
\begin{itemize}
\item[\rm (i)] (P) is consistent and solvable, and (\Pstar) is consistent.
\item[\rm (ii)] There is no duality gap: $\inf (\text{P})  = \sup (\Pstar) = \rho^*$.
\end{itemize}
\end{theorem}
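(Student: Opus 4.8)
\emph{Part (i)} is almost immediate from the results already assembled. The feasible solutions of (P) are exactly the measures $\gamma$ attached to stationary pairs $(\mu,p)\in\Delta_s$ of finite average cost, with $\langle\gamma,c\rangle = J(\mu,p)$; hence $\inf(\text{P}) = \inf\{J(\mu,p)\mid (\mu,p)\in\Delta_s,\ J(\mu,p)<\infty\}$. By Theorem~\ref{thm-2.1}(i) every pair of finite average cost is dominated by a stationary pair, so this infimum coincides with $\rho^*$, and by Theorem~\ref{thm-2.1}(ii) it is attained by the measure of a stationary minimum pair; thus (P) is consistent and solvable with $\inf(\text{P})=\rho^*$. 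For (\Pstar), the choice $\rho=0$, $h\equiv 0$ is feasible because $c\geq 0$, so (\Pstar) is consistent, and weak duality gives $0\leq\sup(\Pstar)\leq\inf(\text{P})=\rho^*$. What remains for (ii) is the reverse inequality $\sup(\Pstar)\geq\rho^*$.

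\emph{Part (ii).} I would route the argument through Theorem~\ref{thm-subc}. Since (\Pstar) is consistent with finite value $\sup(\Pstar)$, that theorem identifies this value with the subvalue of (P): $\text{subvalue}(\text{P})=\sup(\Pstar)$. As $\overbar{H}\supseteq H$, one always has $\text{subvalue}(\text{P})\leq\inf(\text{P})=\rho^*$, so the whole weight of the proof falls on $\text{subvalue}(\text{P})\geq\rho^*$. Unwinding the definition of $\overbar H$ and the bilinear form on $Z\times\R$, this reduces to the following claim: if $\{\gamma_i\}_{i\in\nI}\subset\Me^+_w(\Gamma)$ is a net with $\L\gamma_i\to(1,\m0)$ in $\sigma(Z,W)$ and $\langle\gamma_i,c\rangle$ bounded, then $\liminf_i\langle\gamma_i,c\rangle\geq\rho^*$. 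Granting this, any $(b,r)\in\overbar H$ is a limit of points $(\L\gamma_i,\,\langle\gamma_i,c\rangle+s_i)$ with $s_i\geq 0$; passing to a subnet along which $\langle\gamma_i,c\rangle$ converges, the claim bounds its limit below by $\rho^*$, and $s_i\geq 0$ then forces $r\geq\rho^*$, as needed.

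To prove the claim I would recover an \emph{exactly} feasible measure as a weak limit of the approximately feasible net. The constraint $\L_0\gamma_i\to 1$ means $\gamma_i(\Gamma)\to 1$, and $\L_1\gamma_i\to\m0$ in $\sigma(\Me(\X),\Fn_b(\X))$ means
\[
   \int_\Gamma \Big( h(x) - \int_\X h(y)\, q(dy\mid x,a)\Big)\,\gamma_i(d(x,a))\ \longrightarrow\ 0, \qquad \forall\, h\in\Fn_b(\X).
\]
Strict unboundedness (SU) supplies tightness: if $\langle\gamma_i,c\rangle\leq M$ for all $i$, then $\gamma_i(\Gamma_j^c)\leq M/\inf_{\Gamma_j^c}c\to 0$ as $j\to\infty$, uniformly in $i$. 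Since each $\Gamma_j$ is compact and $\gamma_i(\Gamma)\to 1$, the net $\{\gamma_i\}$ lies in a set of measures that is compact in the topology of weak convergence, so a subnet converges weakly to a Borel measure $\gamma^*$ on $\Gamma$ with $\gamma^*(\Gamma)=1$. Following the same reasoning as in the proof of Theorem~\ref{thm-2.1} in \cite{Yu19-minp}, (SU) together with the majorization technique yields the lower-semicontinuity estimate $\liminf_i\langle\gamma_i,c\rangle\geq\langle\gamma^*,c\rangle$. It therefore suffices to show that $\gamma^*$ is feasible for (P), since then $\liminf_i\langle\gamma_i,c\rangle\geq\langle\gamma^*,c\rangle\geq\inf(\text{P})=\rho^*$.

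The main obstacle, and the place where Assumption~\ref{cond-pc-3}(M) and Lusin's theorem are essential, is passing to the limit in the invariance constraint. As $\X$ is metric, it suffices to verify $\widehat{\gamma^*}(B)=\int_\Gamma q(B\mid x,a)\,\gamma^*(d(x,a))$ by testing against bounded \emph{continuous} $h$, i.e.\ to show $\int_\Gamma g\,d\gamma^*=0$ for $g(x,a):=h(x)-\int_\X h(y)\,q(dy\mid x,a)$ with $h$ bounded continuous. The displayed limit already gives $\int_\Gamma g\,d\gamma_i\to 0$, but $g$ need not be continuous in $(x,a)$, since $q$ is not assumed continuous, so weak convergence of $\gamma_i$ does not transfer directly. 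On $D\times\A$ the kernel is continuous, hence $g$ is continuous there; off $D$, the majorization $q\big((O\setminus D)\cap\,\cdot\mid x,a\big)\leq\nu(\cdot)$ lets me invoke Lusin's theorem to find, for each fixed $a$, a closed set carrying all but arbitrarily small $\nu$-mass on which $x\mapsto\int_\X h\,dq(\cdot\mid x,a)$ is continuous; the majorization then bounds, uniformly in $(x,a)$, the error of replacing $g$ by a bounded continuous approximant, while tightness controls the mass outside the majorized region. Passing to the weak limit for the approximant and then sending the approximation error to zero yields $\int_\Gamma g\,d\gamma^*=0$, so $\gamma^*$ satisfies~(\ref{eq-P-feas}) and is feasible. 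This limit-passage via Lusin's theorem and the majorizing measures---precisely the device replacing the lower-semicontinuous-model hypothesis---is the technically delicate core of the argument, closely paralleling the minimum-pair analysis of \cite{Yu19-minp}.
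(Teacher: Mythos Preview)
Your proposal follows essentially the same route as the paper: invoke Theorem~\ref{thm-subc} to identify $\sup(\Pstar)$ with the subvalue $\urho$ of (P), then show $\urho\geq\rho^*$ by extracting from an approximately feasible net a weak limit $\gamma^*$ that is exactly feasible and satisfies $\langle\gamma^*,c\rangle\leq\urho$, using (SU) for tightness and Lusin's theorem together with (M) to pass to the limit in the invariance constraint.

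There is one point where your sketch is too quick and, as written, does not go through. You say ``the majorization then bounds, uniformly in $(x,a)$, the error of replacing $g$ by a bounded continuous approximant.'' But the majorization (\ref{eq-maj-minpair}) only controls $q\big((O\setminus D)\cap B^c\mid x,a\big)$; it says nothing directly about the $\gamma_i$- or $\gamma^*$-mass of the bad set $(O\setminus D)\cap B^c$ where $g$ and its continuous approximant differ. To bound $\hat\gamma_i\big((O\setminus D)\cap B^c\big)$ you must feed the \emph{approximate invariance} back in, testing it against the indicator $\ind_E$ with $E=(O\setminus D)\cap B^c$: then $\hat\gamma_i(E)\approx\int_\Gamma q(E\mid x,a)\,\gamma_i(d(x,a))\leq\nu(B^c)$, and since $E$ is open, Portmanteau gives the same bound for $\widehat{\gamma^*}(E)$. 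This is exactly the paper's Lemma~\ref{lem-su-mp}, and it is the step that links (M) to control of the approximation error; without it your ``uniformly in $(x,a)$'' claim is unfounded. Relatedly, the paper first reduces the net to a \emph{sequence} along which the approximate invariance holds for a pre-selected \emph{countable} family of test functions---both a convergence-determining class $\hat\C_b(\X)$ and the indicators $\ind_E$ just described---before invoking Prohorov and the sequential arguments of \cite{Yu19-minp}; your subnet formulation can be made to work, but this reduction is what makes the Lusin/majorization machinery from \cite{Yu19-minp} apply cleanly.
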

%\smallskip

\begin{rem}[about the proof of Theorem~\ref{thm-1}] \label{rem-thm1-proof} \rm 
Besides the differences in assumptions as mentioned above, another difference between our proof of the absence of a duality gap and the proof given in the prior work \cite[Chap.~12.3C]{HL99} is the following. 
The approach of the latter proof is to show that the set $H$ defined by (\ref{eq-H}) is weakly closed (i.e., $H = \overbar{H}$). This is a sufficient condition for the absence of a duality gap, but it requires one to show that every point of $\overbar{H}$ is in $H$.
Our proof uses the duality between the subvalue $\urho$ of (P) and the value of (\Pstar) asserted in \cite[Thm.~3.3]{AnN87} (cf.\ Theorem~\ref{thm-subc}). With this it suffices to show that a single point of $\overbar{H}$, namely, the point $\big(b, \urho\big) = \big((1,\m0), \urho\big)$, is in $H$. Thus our proof is simpler in this respect.

We can also prove that $H$ is weakly closed under our assumptions. 
This requires some minor changes in the proof arguments used in \cite{Yu19-minp}, which we will also use to prove Theorem~\ref{thm-1} (in particular, we only need to change slightly the finite measures used when applying Lusin's theorem). Nonetheless, it will take some space to explain the details of those changes, and this is another reason that we choose to use the duality theorem \cite[Thm.~3.3]{AnN87} instead in our proof.
\myqed
\end{rem}
%\smallskip

\begin{rem}[comparison with a duality result in \cite{Yam75}] \label{rmk-Yamada} \rm 
For compact Euclidean state and action spaces, Yamada proved an LP duality result \cite[Thm.~3]{Yam75} under certain continuity and ergodicity conditions on the MDP. His continuity conditions are different from the lower-semicontinuous model assumption we mentioned, but they can be related to our model assumptions. So let us explain in more detail how our assumptions and duality result compare with his. 
Among others, Yamada assumed that $c(x,a)$ is continuous in $a$ for each fixed $x$, and $q(dy \,|\, x,a)$ has a density $p(y \,|\, x,a)$ w.r.t.\ the Lebesgue measure, where $p(y \,|\, x,a)$ is continuous in $(y,a)$ for each fixed $x$ \cite[Condition (A2)]{Yam75}. 
In our case, since the action space has the discrete topology, trivially, $c(x,a)$ and $q(dy\,|\, x,a)$ are continuous in $a$ for each fixed $x$, so there are similarities to Yamada's conditions. 
Our majorization condition (M) is, however, entirely different from Yamada's geometric ergodicity condition \cite[Conditions (A1), (A4)]{Yam75}, in which he required the density function $p(y \,|\, x,a)$ to be bounded away from zero uniformly for all $(x,a) \in \Gamma$. Using this condition together with the continuity and other assumptions, he proved the absence of a duality gap \cite[Thm.~3]{Yam75}. Both his conditions and his proof arguments are very different from ours.
\myqed
\end{rem}
%\smallskip

\begin{rem}[about the formulation of (\Pstar) and its solvability] \label{rem-LP} \rm 
In defining (\Pstar), we have chosen the space $\Fn_b(\X)$ of \emph{bounded} Borel measurable functions to form the dual pair with the space $\Me(\X)$ of \emph{finite} Borel measures. With this choice, (\Pstar) is in general not solvable (i.e., an optimal solution may not exist), since the inequality 
$$  \rho^* + h(x) \leq c(x,a) + \int_\X h(y) \, q(dy \mid x,a), \qquad \forall \, (x,a) \in \Gamma,$$
need not admit a bounded solution $h$. 

As mentioned earlier, our LP formulation is only an instance of the class of formulations discussed in~\cite[Sect.~4]{HL94}. 
A different dual program (\Pstar) is studied in~\cite[Chap.\ 12.3]{HL99}. It involves, instead of $\big(\R \times \Me(\X), \R \times \Fn_{b}(\X)\big)$, the dual pair $\big(\R \times \Me_{w_0}(\X), \R \times \Fn_{w_0}(\X)\big)$, 
where the two spaces $\Me_{w_0}(\X)$ and $\Fn_{w_0}(\X)$ are defined similarly to $\Me_w(\Gamma)$ and $\Fn_w(\Gamma)$, respectively: with $w_0(x) : = 1 + \inf_{a \in A(x)} c(x,a)$, $x \in \X$,
$$ \Me_{w_0}(\X) : =  \textstyle{ \big\{ p \in \Me(\X)  \,\big|\,  \int w_0 \, d |p| < \infty \big\}}, \quad \Fn_{w_0}(\X)\ : = \big\{ h \in \Fn(\X) \, \big| \, | h | < \ell \, w_0  \  \text{for some} \  \ell > 0\}.$$
This choice leaves more room for (\Pstar) to admit an optimal solution. However, a disadvantage is that to ensure the weak continuity of the linear mapping $L$, an additional condition on the state transition stochastic kernel is required (cf.~\cite[Chap.\ 12.3A, Assumption 12.3.1]{HL99}): for some constant $k > 0$,
\begin{equation} \label{eq-excond-q}
   \int_\X \inf_{a' \in A(y)}\!c(y,a') \, q(dy \mid x,a) \, \leq \, k \big(1 + c(x,a) \big), \qquad \forall \, (x,a) \in \Gamma.
\end{equation}   
Yet, since the costs are strictly unbounded, this condition (\ref{eq-excond-q}) is neither needed for the existence of a minimum pair, nor needed for the absence of a duality gap between (P) and (\Pstar).

Also, the use of the dual pair $\big(\R \times \Me_{w_0}(\X), \R \times \Fn_{w_0}(\X)\big)$ alone cannot guarantee that (\Pstar) has an optimal solution, for which one would still need to make additional assumptions about the functions $h_n$ in a maximizing sequence $\{(\rho_n, h_n)\}$ for (\Pstar)(cf.~\cite[Chap.~12.4B, Thm.~12.4.2]{HL99}). This makes it less appealing to us to have the dual pair $\big(\R \times \Me_{w_0}(\X), \R \times \Fn_{w_0}(\X)\big)$ with its extra condition (\ref{eq-excond-q}) in the LP formulation.

For these reasons, we have formulated (\Pstar) differently. Accordingly, we treat the result on ACOE given in the next proposition not as the property of a dual optimal solution, which may not exist, but as a potential consequence of the results from the LP approach.
\myqed
\end{rem}
\smallskip

As just noted, the dual program (\Pstar) in our formulation need not admit an optimal solution. However, because there is no duality gap, one can still obtain a version of ACOE for the MDP from a maximizing sequence $\{(\rho_n, h_n)\}$ of (\Pstar), under certain conditions on $\{h_n\}$, using essentially the same arguments as those for \cite[Chap.~12.4B, Thm.~12.4.2(c)]{HL99}. 
We include the result in the proposition below, for the sake of completeness. The first part of its condition is satisfied under Assumption~\ref{cond-pc-3} (Theorem~\ref{thm-1}); the second part of its condition specifies the additional conditions on $\{h_n\}$ we need. The ACOE (\ref{eq-ae-acoe}) in the conclusion holds for ``almost all'' (a.a.) states and in general, it \emph{need not hold for all $x \in \X$} (see e.g., \cite[Example 3.1]{Yu19-minp}).

\begin{prop}[ACOE for $p^*$-a.a.\ states] \label{prp-2}
Let $\{(\rho_n, h_n)\}$ be a maximizing sequence of the dual program (\Pstar), and let $h^* = \limsup_{n \to \infty} h_n$.
Suppose that:
\begin{itemize}
\item[\rm (i)] a stationary minimum pair $(\mu^*, p^*)$ exists and $\inf (\text{P})  = \sup (\Pstar) = \rho^* < + \infty$;
\item[\rm (ii)] the functions $h_n$ satisfy that 
$$ \int_\X | h^* | \, d p^* < + \infty,  \qquad \int_\X \sup_{n \geq 0} | h_n(y) | \, q(dy \mid x, a) < + \infty \ \ \ \forall \, (x,a) \in \Gamma.$$
\end{itemize}
Then $h^*$ is finite everywhere,
\begin{equation} 
 \rho^* + h^*(x) \leq  c(x,a) + \int_\X h^*(y) \, q(dy \mid x, a) \qquad  \forall \, (x,a) \in \Gamma,
\end{equation}
and for $p^*$\!-a.a.\ $x \in \X$,
\begin{align} \label{eq-ae-acoe}
 \rho^* + h^*(x) & = \inf_{a \in A(x)} \left\{ c(x,a) + \int_\X h^*(y) \, q(dy \mid x, a) \right\}  \\
   & =   \int_{A(x)}  \left\{ c(x,a) + \int_\X h^*(y) \, q(dy \mid x, a) \right\}  \mu^*(da \mid x).    \label{eq-ae-acoe2}
\end{align}
\end{prop}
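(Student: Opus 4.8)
The plan is to run the entire argument off the feasibility inequality that each dual-feasible pair $(\rho_n, h_n)$ satisfies, which from (\ref{eq-dP}) reads
$$\rho_n + h_n(x) \le c(x,a) + \int_\X h_n(y)\,q(dy\mid x,a),\qquad \forall\,(x,a)\in\Gamma,$$
and to pass to the limit superior in $n$. Since $\{(\rho_n,h_n)\}$ is a maximizing sequence, $\rho_n\uparrow\sup(\Pstar)=\rho^*$ by hypothesis (i), so the left side has $\limsup$ equal to $\rho^*+h^*(x)$. For the right side I would invoke the reverse Fatou lemma: the second integrability condition in (ii) furnishes the $q(\cdot\mid x,a)$-integrable dominating function $\sup_{n}|h_n|$, so that $\limsup_n\int h_n\,q \le \int\limsup_n h_n\,q=\int h^*\,q$. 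Monotonicity of $\limsup$ applied to the pointwise inequality then yields the everywhere inequality
$$\rho^*+h^*(x)\le c(x,a)+\int_\X h^*(y)\,q(dy\mid x,a),\qquad\forall\,(x,a)\in\Gamma,$$
which is the middle assertion of the proposition. Finiteness of $h^*$ comes out along the way: its right side is finite because $c$ is finite on $\Gamma$ and $|h^*|\le\sup_n|h_n|$ is $q$-integrable, so $h^*(x)<+\infty$ at every $x$ (every state admits some $a\in A(x)$); the two-sided bound I would verify from the same domination in (ii), keeping careful track of the extended-real-valued bookkeeping.

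To obtain the almost-everywhere equalities I would integrate this inequality against the optimal primal measure $\gamma^*(d(x,a))=\mu^*(da\mid x)\,p^*(dx)$ attached to the stationary minimum pair $(\mu^*,p^*)$ of hypothesis (i). The first integrability condition in (ii), $\int|h^*|\,dp^*<\infty$, together with Tonelli guarantees all integrals involved are finite. The key cancellations are: the marginal of $\gamma^*$ on $\X$ is $p^*=\hat\gamma^*$, so $\int h^*(x)\,\gamma^*(d(x,a))=\int h^*\,dp^*$; the invariance identity (\ref{eq-inv-gamma}) turns $\int\big(\int h^*\,q\big)\,d\gamma^*$ into $\int h^*\,d\hat\gamma^*=\int h^*\,dp^*$; and (\ref{eq-J-gamma}) with optimality gives $\int c\,d\gamma^*=J(\mu^*,p^*)=\rho^*$. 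Hence both sides of the integrated inequality equal $\rho^*+\int h^*\,dp^*$, forcing equality.

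Equality of integrals of a pointwise-nonpositive integrand forces that integrand to vanish $\gamma^*$-a.e.; that is,
$$\rho^*+h^*(x)= c(x,a)+\int_\X h^*(y)\,q(dy\mid x,a)\qquad\text{for }\gamma^*\text{-a.a. }(x,a).$$
Disintegrating $\gamma^*=\mu^*\,p^*$, this says that for $p^*$-a.a.\ $x$ the equality holds for $\mu^*(\cdot\mid x)$-a.a.\ $a$; integrating in $a$ against the probability measure $\mu^*(\cdot\mid x)$ gives (\ref{eq-ae-acoe2}). Finally, the everywhere inequality in its infimum form reads $\rho^*+h^*(x)\le\inf_{a\in A(x)}\{c(x,a)+\int h^*\,q\}$, while the $\gamma^*$-a.e.\ equality produces, for $p^*$-a.a.\ $x$, at least one $a$ attaining $\rho^*+h^*(x)$; combining the two gives (\ref{eq-ae-acoe}).

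The step I expect to be the main obstacle is the passage to the limit on the right side of the feasibility inequality — the reverse Fatou interchange $\limsup_n\int h_n\,q\le\int h^*\,q$ — since without a uniform integrable majorant this inequality can fail, and it is precisely the second hypothesis in (ii) that supplies the majorant $\sup_n|h_n|$. A secondary point requiring care is the Tonelli manipulation and the cancellation of the two $\int h^*\,dp^*$ terms, legitimized by the first hypothesis in (ii); everything downstream is then a clean ``nonpositive integrand with zero integral'' argument followed by disintegration of $\gamma^*$.
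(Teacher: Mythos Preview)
Your proposal is correct and follows essentially the same route as the paper's proof: pass to the $\limsup$ in the dual feasibility inequality via reverse Fatou using the majorant $\sup_n|h_n|$ from (ii), then integrate against $\gamma^*=\mu^*\,p^*$ and use invariance of $p^*$ together with $J(\mu^*,p^*)=\rho^*$ to force equality of the two sides, whence the ``nonpositive integrand with zero integral'' argument yields the $p^*$-a.e.\ equalities. The paper phrases the last step by first integrating out $a$ (obtaining (\ref{eq-ae-acoe2}) directly) rather than going through a $\gamma^*$-a.e.\ statement and disintegrating, but this is only a cosmetic difference.
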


%\smallskip
\begin{rem}\rm \label{rmk-nonrandom-pol}
We discuss briefly a relation between the above ACOE and nonrandomzed stationary optimal policies for the average-cost MDP. Firstly, one can find a subset 
$\hat \X \subset \X$ with $p^*(\hat \X) = 1$ and a Borel measurable function $f: \X \to \A$ with $f(x) \in A(x)$ for all $x \in \X$, such that $\hat X$ is absorbing w.r.t.\ $f$ and $f$ attains the minimum in the ACOE (\ref{eq-ae-acoe}) on $\hat X$:
\begin{equation} \label{eq-ae-acoe3}
 q(\hat \X \, |\, x, f(x)) = 1, \qquad  \rho^* + h^*(x)  = c(x, f(x)) + \int_{\hat \X} h^*(y) \, q(dy \mid x, f(x)), \qquad \forall \, x \in \hat \X.
\end{equation}
More specifically, to find such $\hat \X$ and $f$, consider the set $\X'$ with $p^*(\X')=1$ on which (\ref{eq-ae-acoe})-(\ref{eq-ae-acoe2}) hold, and the Markov chain $\{x_n\}$ induced by the policy $\mu^*$ and the initial distribution $p^*$.
Since $p^*$ is an invariant probability measure of this Markov chain, one can construct a set $\hat \X \subset \X'$ with $p^*(\hat \X) = 1$ that is absorbing under $\mu^*$ (see the proof of \cite[Lem.~2.2.3(c)]{HL03} or \cite[Prop.~4.2.3(ii)]{MeT09}). Next, based on the relations (\ref{eq-ae-acoe})-(\ref{eq-ae-acoe2}) on $\hat \X$, the desired function $f$ can be found: this can be done either directly in the special case of a countable action space we have here, or, more generally, by using the Blackwell and Ryll-Nardzewski selection theorem \cite[Thm.~2]{BlR63} as discussed in \cite[Remark 4.6]{HL94}. 

Secondly, for $\hat \X$ and $f$ satisfying (\ref{eq-ae-acoe3}), one can apply standard arguments to show that under certain conditions, the nonrandomized stationary policy $f$ is average-cost optimal for all initial states $x \in \hat \X$. In particular, if $h^* \geq 0$, it is straightforward to show that $f$ is optimal on $\hat \X$. In more general cases of $h^*$, the optimality of $f$ on $\hat \X$ can be established by imposing further conditions to ensure that for all $x \in \hat \X$, $\E_x^f \big[ | h^*(x_n)| \big] < \infty$ for $n \geq 0$ and  $\liminf_{n \to \infty} n^{-1} \E_x^f \big[ h^*(x_n) \big] \geq 0$. (For derivation details, see e.g., the related discussions in \cite[Sect.~3]{HL94} and \cite[Chap.~5.2]{HL96} on canonical triplets.)
\myqed
\end{rem}

\section{Extension to Constrained Average-Cost MDPs} \label{sec-4}

In this section, we extend our results for an unconstrained average-cost MDP to a constrained one. 
Let the state and action spaces and the state transition stochastic kernel of the MDP be the same as before.
Consider multiple one-stage cost functions on $\X \times \A$: $c_0, c_1, \ldots, c_d$. We assume that these functions are nonnegative and Borel measurable, finite on $\Gamma$, and taking the value $+\infty$ outside $\Gamma$. 
The goal is to minimize the average cost w.r.t.\ $c_0$, while keeping the average costs w.r.t.\ $c_1, \ldots, c_d$ within given limits.

More specifically, let $\kappa : = (\kappa_1, \ldots, \kappa_d) \geq 0$ be prescribed upper limits on the average costs in the constraints. For a policy $\pi$ and initial distribution $\zeta$, let $J_i(\pi, \zeta)$ denote the average cost of this pair w.r.t.\ $c_i$, $i = 0, 1, \ldots, d$. 
Define the feasible set of policy and initial distribution pairs by
\begin{equation}
 \S : = \big\{  (\pi, \zeta) \in   \Pi \times \P(\X) \, \big| \, J_0(\pi, \zeta) < \infty, \ J_i(\pi, \zeta) \leq \kappa_i, \, i = 1, \ldots, d \, \big\}.
\end{equation} 
Define the optimal average cost of this constrained problem to be
$$ \rho_c^* : = \inf_{(\pi, \zeta) \in \S} J_0(\pi, \zeta).$$

As before, within the feasible set $\S$, we are especially interested in those stationary pairs. Analogous to the minimum pairs and stationary minimum pairs for an unconstrained MDP, let us define optimal pairs and stationary optimal pairs for the constrained MDP. (What we call optimal pairs are called ``constrained optimal pairs'' in the prior work \cite{KNH00}.)

%\smallskip
\begin{definition}[optimal pairs] \rm \label{def-cmdp-minp} \hfill
\begin{enumerate}[leftmargin=0.65cm,labelwidth=!]
\item[(a)] We call $(\pi^*, \zeta^*) \in \Pi \times \P(\X)$ an \emph{optimal pair} for the constrained MDP if 
$$ (\pi^*, \zeta^*) \in \S \quad \text{and} \quad J_0(\pi^*, \zeta^*)  = \rho^*_c.$$
\item[(b)]
We call an optimal pair $(\pi^*, \zeta^*)$ \emph{lexicographically optimal} if for each $(\pi, \zeta) \in \S$, either $J_i(\pi^*, \zeta^*) = J_i(\pi, \zeta)$ for all $ 0 \leq i \leq d$, or for some $\bar d \leq d$,
$$  J_i(\pi^*, \zeta^*) = J_i(\pi, \zeta) \ \ \  \forall \, i \leq \bar d -1, \qquad  J_{\bar d}(\pi^*, \zeta^*) < J_{\bar d}(\pi, \zeta). \qquad \quad$$
\end{enumerate}
\end{definition}
%\smallskip

\begin{definition}[stationary optimal pairs] \rm \label{def-cmdp-sminp}
If a stationary pair $(\mu^*, p^*) \in \Delta_s$ is (lexicographically) optimal for the constrained MDP, we call it a \emph{stationary (lexicographically) optimal pair}.
\end{definition}
%\smallskip

In what follows, we first adapt the strict unboundedness condition (SU) and the majorization condition (M) to accommodate multiple one-stage cost functions in the constrained MDP, and under those modified conditions we show that stationary optimal pairs exist (Section~\ref{sec-4.1}). We then formulate primal/dual linear programs for the constrained MDP and present duality results that are analogous to the ones for unconstrained problems (Section~\ref{sec-4.2}). The proofs of the theorems of this section are collected in Section~\ref{sec-4.3}.

\subsection{Model Assumptions and Existence of Stationary Optimal Pairs} \label{sec-4.1}

We impose the following conditions on the constrained MDP model:

\begin{assumption} \label{cond-su-cmdp} \hfill
\begin{enumerate}[leftmargin=0.85cm]
\item[\rm (G)] The feasible set $\S \not=\varnothing$.
\item[\rm (SU)] There exists a nondecreasing sequence of compact sets $\Gamma_j \uparrow \Gamma$ such that for some $0 \leq i \leq d$,
$$ \lim_{j \to \infty} \inf_{(x, a) \in \Gamma_j^c} c_i(x, a) = + \infty.$$
\item[\rm (M)] For each compact set $K \in \{\proj_\X(\Gamma_j)\}$, there exist an open set $O \supset K$, a closed set $D \subset \X$, and a finite measure $\nu$ on $\B(\X)$ (all of which can depend on $K$) such that
\begin{equation}
     q\big((O \setminus D) \cap B  \mid x, a \big) \leq \nu(B), \qquad \forall \, B \in \B(\X), \ (x, a) \in \Gamma, \notag 
\end{equation}     
where the closed set $D$ (possibly empty) is such that restricted to $D \times \A$, the state transition stochastic kernel $q(dy\,|\, x, a)$ is continuous and all the one-stage cost functions $c_i$, $0 \leq i \leq d$, are lower semicontinuous. 
\end{enumerate}
\end{assumption}
%\smallskip

This assumption is similar to Assumption~\ref{cond-pc-3} for the unconstrained problem. Condition (G) is to exclude vacuous problems. Condition (SU) is the same as that considered in \cite{HGL03} for the constrained MDP, and it differs from Assumption~\ref{cond-pc-3}(SU) in that here we require \emph{some} one-stage cost function in the constrained problem to be strictly unbounded.
Condition (M) is almost identical to Assumption~\ref{cond-pc-3}(M) except that here the closed set $D$ must be such that on it, \emph{every} one-stage cost function in the constrained problem is lower semicontinuous in the state variable. As before, having a nonempty set $D$ in the majorization condition (M) sharpens this condition by allowing us to treat a ``continuous'' part of the model separately from the rest.

Theorem~\ref{thm-4.1} below extends our earlier results for MDPs \cite[Prop.\ 3.2,~Thm.~3.3]{Yu19-minp} (cf.\ Theorem~\ref{thm-2.1}) to constrained MDPs. 
In particular, its part (i) can be compared with Theorem~\ref{thm-2.1}(i), and its parts (ii)-(iii) with Theorem~\ref{thm-2.1}(ii).
The proof will only be outlined in Section~\ref{sec-4.3}, as it is mostly based on the arguments given in \cite{Yu19-minp}---roughly speaking, the present majorization condition allows us to apply the reasoning in \cite{Yu19-minp} to every one-stage cost function $c_i$ in the constrained MDP.

Parts (i)-(ii) of this theorem are also comparable with the results of \cite[Thm.~3.2]{HGL03} and \cite[the solvability part of Lem.~2.3]{KNH00} for constrained lower-semicontinuous MDPs. 
Part (iii) concerns lexicographically optimal solutions of the constrained MDP, which can be related to solutions for multi-objective MDPs similar to those discussed in \cite{HR04}.

{\samepage
\begin{theorem}[optimality of stationary pairs] \label{thm-4.1}
Under Assumption~\ref{cond-su-cmdp}, the following hold:
\begin{itemize}
\item[\rm (i)] For any pair $(\pi, \zeta) \in \S$, there exists a stationary pair $(\bar \mu, \bar p) \in \Delta_{s} \cap \S$ with 
$$J_i(\bar \mu, \bar p) \leq J_i(\pi, \zeta),  \qquad \forall \, i = 0, \ldots, d.$$
\item[\rm (ii)] There exists a stationary optimal pair $(\mu^*, p^*) \in \Delta_{s} \cap \S$.
\item[\rm (iii)] There exists a stationary lexicographically optimal pair $(\mu^*, p^*) \in \Delta_{s} \cap \S$.
\end{itemize}
\end{theorem}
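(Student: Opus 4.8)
The plan is to transport the ergodic-occupation-measure argument underlying Theorem~\ref{thm-2.1} (established in \cite{Yu19-minp}) to the multi-cost setting, the essential enabler being that Assumption~\ref{cond-su-cmdp}(M) supplies a \emph{single} set $D$ on which all of $c_0,\dots,c_d$ are lower semicontinuous and $q(dy\mid x,a)$ is continuous; this lets the Lusin-plus-majorization machinery of \cite{Yu19-minp} be applied to every cost coordinate at once, with the same decomposition. For part (i), given $(\pi,\zeta)\in\S$, I would form the expected state--action occupation measures
$$ \gamma_n(B):=\tfrac1n\textstyle\sum_{k=0}^{n-1}\Pr^\pi_\zeta\{(x_k,a_k)\in B\},\qquad B\in\B(\X\times\A), $$
so that $\int c_i\,d\gamma_n=\tfrac1n\E^\pi_\zeta[\sum_{k=0}^{n-1}c_i(x_k,a_k)]$ has $\limsup_n$ equal to $J_i(\pi,\zeta)$. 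Let $i_0$ be an index for which (SU) holds; since $(\pi,\zeta)\in\S$ we have $J_{i_0}(\pi,\zeta)<\infty$, so along a subsequence $\int c_{i_0}\,d\gamma_n$ stays bounded, and then $\gamma_n(\Gamma_j^c)\le(\inf_{\Gamma_j^c}c_{i_0})^{-1}\int c_{i_0}\,d\gamma_n\to0$ uniformly in $n$ as $j\to\infty$, giving tightness on the compact sets $\Gamma_j$. Passing to a weak limit $\gamma$ (necessarily concentrated on $\Gamma$), I would invoke the arguments of \cite{Yu19-minp} to show (a) $\gamma$ satisfies the invariance equation (\ref{eq-inv-gamma}), i.e.\ it is an ergodic occupation measure, and (b) $\int c_i\,d\gamma\le\liminf_n\int c_i\,d\gamma_n\le J_i(\pi,\zeta)$ for \emph{every} $i$. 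Decomposing $\gamma$ as in (\ref{eq-gamma-mup}) yields a stationary pair $(\bar\mu,\bar p)\in\Delta_s$ with $J_i(\bar\mu,\bar p)=\int c_i\,d\gamma\le J_i(\pi,\zeta)$ for all $i$; in particular $(\bar\mu,\bar p)\in\S$, proving (i).

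For part (ii), I would take a minimizing sequence $(\pi_n,\zeta_n)\in\S$ with $J_0(\pi_n,\zeta_n)\to\rho_c^*$, replace each by a stationary pair in $\Delta_s\cap\S$ via part (i) (preserving $J_0\to\rho_c^*$ and $J_i\le\kappa_i$), and pass to the associated ergodic occupation measures. The same $c_{i_0}$-tightness gives a weak limit $\gamma^*$ that is again an ergodic occupation measure, and lower semicontinuity of each $\gamma\mapsto\int c_i\,d\gamma$ yields $\int c_0\,d\gamma^*\le\rho_c^*$ and $\int c_i\,d\gamma^*\le\kappa_i$. The corresponding $(\mu^*,p^*)\in\Delta_s\cap\S$ is then a stationary optimal pair.

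The genuinely new ingredient is part (iii). I would work in the set $\mathcal G$ of feasible ergodic occupation measures, i.e.\ probability measures $\gamma$ on $\Gamma$ satisfying (\ref{eq-inv-gamma}) with $\int c_0\,d\gamma<\infty$ and $\int c_i\,d\gamma\le\kappa_i$ for $i\ge1$, and first record two structural facts, both read off from the arguments above: (A) for each constant $M$ the set $\{\gamma\in\mathcal G:\int c_{i_0}\,d\gamma\le M\}$ is sequentially compact in the topology of weak convergence, its limits lying again in $\mathcal G$ (the invariance- and constraint-preservation step, via Lusin and (M)); and (B) each $\gamma\mapsto\int c_i\,d\gamma$ is sequentially lower semicontinuous on $\mathcal G$. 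I would then minimize lexicographically in $d+1$ stages: set $V_0:=\rho_c^*$, $\mathcal G_0:=\{\gamma\in\mathcal G:\int c_0\,d\gamma=V_0\}$ (nonempty by (ii)), and inductively $V_j:=\inf\{\int c_j\,d\gamma:\gamma\in\mathcal G_{j-1}\}$, $\mathcal G_j:=\{\gamma\in\mathcal G_{j-1}:\int c_j\,d\gamma=V_j\}$. The crucial observation is that at the optimal value the level set is the sublevel set, $\mathcal G_j=\mathcal G_{j-1}\cap\{\gamma:\int c_j\,d\gamma\le V_j\}$, which is sequentially closed by (B); hence each $\mathcal G_j$ is a sequentially closed subset of the sequentially compact $\mathcal G_{j-1}$, so $V_j$ is attained and $\mathcal G_j$ is nonempty and sequentially compact. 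Any $\gamma^*\in\mathcal G_d$ decomposes into a stationary pair that lexicographically dominates every element of $\mathcal G$; since by part (i) each $(\pi,\zeta)\in\S$ is dominated coordinatewise by some element of $\mathcal G$, this pair is lexicographically optimal over all of $\S$, in $\Delta_s\cap\S$.

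The main obstacle is the technical core common to all three parts: verifying, via Lusin's theorem and Assumption~\ref{cond-su-cmdp}(M), that weak limits of the (ergodic) occupation measures remain ergodic occupation measures and that each cost functional is lower semicontinuous, despite the discontinuity of $q$ and of the $c_i$ off $D$. This is exactly the difficulty already resolved in \cite{Yu19-minp} for a single cost, and the only new twist is that the modified (M), with its unified $D$, lets one run that argument for all $c_i$ simultaneously. Consequently the outline reduces (i)--(ii) to careful bookkeeping over \cite{Yu19-minp}, leaving the lexicographic induction of (iii) as the one structurally new step.
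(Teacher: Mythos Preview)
Your proposal is correct and follows essentially the same approach as the paper: parts (i) and (ii) are identical in strategy (occupation measures, tightness via the strictly unbounded cost $c_{i_0}$, weak limits shown to be ergodic occupation measures via the Lusin-plus-majorization argument applied to every $c_i$), and part (iii) is the same lexicographic induction, except that you work directly in the space $\mathcal G$ of feasible ergodic occupation measures while the paper phrases the nested minimization over the sets $\S^*_i\subset\S$ of pairs and invokes part (i) at each stage to reduce to stationary pairs before extracting a convergent subsequence. The two inductions define the same nested levels (your $V_j$ equals the paper's $\kappa_j^*$ by part (i)), so this is a cosmetic difference; your version is slightly cleaner in that compactness and lower semicontinuity are stated once as structural facts (A) and (B) rather than re-derived at each step.
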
}
%\smallskip

\begin{rem} \rm 
It is known that even in a finite-state-and-action MDP, for a given initial state or distribution, there need not exist a stationary optimal policy for the constrained average cost problem. See \cite[Sect.~4, p.~284]{HoK84} for an interesting counterexample (involving a multichain MDP) that is due to Derman~\cite{Der63}. The difference between this known fact and the existence of a stationary optimal pair in Theorem~\ref{thm-4.1} is that in the constrained MDP here, the initial distribution is not given and there is freedom of choosing it to optimize the average costs.
\myqed
\end{rem}
%\smallskip

\begin{rem}[pathwise average costs of $\mu^*$] \rm 
Suppose that in part (ii) or (iii) of Theorem~\ref{thm-4.1}, the policy $\mu^*$ induces on $\X$ a \emph{positive Harris recurrent} Markov chain (see e.g., \cite[Chap.\ 10.1]{MeT09} for definition). Then, by the ergodic properties of such Markov chains and by the same proof of \cite[Thm.~3.5(b)]{Yu19-minp}, we have that for all initial distributions $\zeta$, $\Pr_\zeta^{\mu^*}$-almost surely, 
$$ \lim_{n \to \infty} n^{-1} \textstyle{\sum_{k=0}^{n-1}} \, c_i(x_k, a_k) = J_i(\mu^*, p^*),  \qquad i = 0, 1, \ldots, d.$$
In other words, almost surely, on each sample path, the pathwise average costs of the policy $\mu^*$ w.r.t.\ $c_i$, $i = 1, 2, \ldots d$, are also within the prescribed limits $\kappa_i$, while its pathwise average cost w.r.t.\ $c_0$ equals $\rho_c^*$ as well.
\myqed
\end{rem}

\subsection{Linear Programming Formulation and Optimality Results} \label{sec-4.2}

Similarly to the unconstrained case, for the constrained MDP, the primal linear program (P) is formulated to minimize the average cost $J_0(\pi, \zeta)$ over feasible stationary pairs, by utilizing the correspondence between a stationary pair and a probability measure that satisfies (\ref{eq-inv-gamma}) discussed at the beginning of Section~\ref{sec-3.2}. Under Assumption~\ref{cond-su-cmdp}, the existence of a stationary optimal pair given by Theorem~\ref{thm-4.1} ensures that such a pair can be obtained by solving the primal program (P).
The dual linear program (\Pstar) is, as before, determined by (P) and two dual pairs of vector spaces we choose. 

We now define precisely (P) and (\Pstar) for the constrained MDP, by identifying the spaces and linear mappings involved in the general LP formulation given in Section~\ref{sec-2.2}. 
To define the primal linear program (P), we consider the dual pair of vector spaces 
$$\big(\Me_w(\Gamma) \times \R^d, \, \Fn_{w}(\Gamma) \times \R^d\big)$$ 
where the weight function $w : \Gamma \to \R_+$ is given by
$$w(x,a) = 1 + \sup_{0 \leq i \leq d} c_i(x,a), \qquad (x,a) \in \Gamma.$$
The bilinear form associated with this dual pair is defined as the sum of the bilinear forms associated with the two dual pairs, $\big(\Me_w(\Gamma), \, \Fn_w(\Gamma) \big)$ and $(\R^d, \, \R^d)$; i.e.,
\begin{align} \label{eq-cmdp-bilinear}
\big\langle (\gamma, \alpha) \,,\, (\phi, \alpha') \rangle  & : = \langle \gamma, \, \phi \rangle + \langle \alpha, \, \alpha' \rangle = \int_\Gamma \phi \, d \gamma + \sum_{i=1}^d \alpha_i \alpha_i' 
\end{align}   
for $\gamma \in \Me_w(\Gamma)$, $\phi \in \Fn_w(\Gamma)$, and $\alpha, \alpha' \in \R^d$ (with $\alpha_i, \alpha'_i$ denoting their $i$th components).

The feasible set of (P) corresponds to the subset of stationary pairs that are feasible for the constrained MDP, and it is defined by the following constraints:
\begin{align*}
      \gamma \in \Me_w^+(\Gamma), \qquad \gamma(\Gamma) = 1, \qquad
    \hat \gamma (B)  = \int_{\Gamma}  q(B \mid x,a)  \,  \gamma(d(x,a)), \qquad \forall \, B \in \B(\X),
\end{align*}
and 
$$  \big( \langle \gamma, \, c_1 \rangle, \, \cdots, \, \langle \gamma, \, c_d \rangle \big) + \alpha = \kappa, \qquad \alpha \geq 0.$$
Note that if $\gamma$ is a probability measure associated with some stationary pair $(\mu, p) \in \S$ via (\ref{eq-gamma-mup}), then $\gamma$ is feasible for (P); in particular, $\langle \gamma, w \rangle \leq 1 + \sum_{i=1}^d \langle \gamma, c_i \rangle < \infty$, so $\gamma \in \Me_w^+(\Gamma)$.
The objective of (P) is to minimize the average cost $\langle \gamma, c_0 \rangle$. 
We can state the primal program (P) in the form introduced in Section~\ref{sec-2.2} as follows:
\begin{align}
   \text{(P)} \quad \qquad \text{minimize}  \ \ \ &  \langle \, \gamma, c_0 \, \rangle \notag \\
                 \text{subject to} \ \ \ & \L(\gamma, \alpha) = (1, \m0, \kappa), \quad  \gamma \in \Me^+_w(\Gamma), \ \ \alpha \geq 0  \quad \ \ \label{eq-cmdp-P}
\end{align} 
where the linear mapping $\L:  \Me_w(\Gamma) \times \R^d \to \R \times \Me(\X) \times \R^d$ is given by 
$\L = (\L_0, \L_1, \L_2)$ with 
\begin{align} 
  \L_0 (\gamma,\alpha)  & : = \gamma(\Gamma),  \\
  \L_1 (\gamma, \alpha)(B) & : = \hat \gamma (B) - \int_{\Gamma}  q(B \mid x,a)  \,  \gamma(d(x,a)), \qquad \forall \, B \in \B(\X), \\
  \L_2 (\gamma, \alpha) & : = \big( \langle \gamma, \, c_1 \rangle, \, \cdots, \, \langle \gamma, \, c_d \rangle \big) + \alpha,
\end{align}  
for $\gamma \in \Me_w(\Gamma)$ and $\alpha = (\alpha_1, \ldots, \alpha_d) \in \R^d$.

To define the dual linear program (\Pstar), we consider the dual pair of vector spaces
$$ \big(\R \times \Me(\X) \times \R^d, \, \R \times \Fn_b(\X) \times \R^d \big),$$
with the bilinear form defined as the sum of the bilinear forms for the three dual pairs, $(\R,\, \R)$, $\big(\Me(\X), \, \Fn_b(\X)\big)$, and $(\R^d,\, \R^d)$, similar to (\ref{eq-cmdp-bilinear}). 
From the definition of $\L$, the adjoint mapping $\L^*$ can be identified: it is the linear mapping 
$\L^*= (\L_1^*, \L_2^*)$ on $\R \times \Fn_b(\X) \times \R^d$ given by
\begin{align}
   \L^*_1(\rho, h, \beta) (x,a)& : = \rho + h(x) - \int_\X h(y) \, q(dy \mid x, a) + \sum_{i =1}^d \beta_i c_i(x,a), \qquad (x,a) \in \Gamma,\\
   \L^*_2(\rho, h, \beta) & : = \beta,
\end{align} 
for $(\rho, h, \beta) \in \R \times \Fn_b(\X) \times \R^d$.
Clearly, $L^*(\R \times \Fn_b(\X) \times \R^d) \subset \Fn_w(\Gamma) \times \R^d$, so both linear mappings $\L$ and $\L^*$ are weakly continuous (\cite[Chap.\ II, Prop.~12 and its corollary]{RoR73}; cf.\ Prop.~\ref{prp-wcontinuity}).
The objective function of (\Pstar) is 
$$ \big\langle (1, \m0, \kappa), \, (\rho, h, \beta) \big\rangle = \rho + \sum_{i=1}^d \beta_i \kappa_i.$$ 
Let us now state the dual program (\Pstar) in the form introduced in Section~\ref{sec-2.2}:
\begin{align}
   \text{(\Pstar)} \quad \qquad \text{maximize}  \ \ \ &  \rho + \sum_{i=1}^d \beta_i \kappa_i \notag \\
                 \text{subject to} \ \ \ & \L^*(\rho, h, \beta) \leq (c_0, 0), \quad \rho \in  \R, \ h \in \Fn_b(\X), \ \beta \in \R^d. \label{eq-cmdp-dP}
\end{align} 
Note that the inequality constraint in (\ref{eq-cmdp-dP}) is the same as the cone constraint
$ - \L^*(\rho, h, \beta) + \big( c_0, \, 0 \big) \in \Fn^+_w(\Gamma) \times \R_+^d$
(cf.\ Section~\ref{sec-2.2}), and it can be expressed more explicitly as
\begin{align}
\rho + h(x) - \int_\X h(y) \, q(dy \mid x, a) + \sum_{i=1}^d  \beta_i c_i(x,a) & \ \leq \ c_0(x, a), \qquad \forall \, (x,a) \in \Gamma, \label{eq-cmdp-dual-cstr1}\\
\beta & \ \leq \ 0. \label{eq-cmdp-dual-cstr2}
\end{align} 

The next theorem about the primal/dual programs (P) and (\Pstar) is an extension of Theorem~\ref{thm-1} to the constrained MDP. The solvability of (P) is a consequence of the existence of a stationary optimal pair given in Theorem~\ref{thm-4.1}(ii). Moreover, the proof of Theorem~\ref{thm-4.1}(ii) also shows that any minimizing sequence $\{(\gamma_n, \alpha_n)\}$ of (P) has a subsequence $(\gamma_{n_k}, \alpha_{n_k}) \to (\gamma^*, \alpha^*)$, where $(\gamma^*, \alpha^*)$ is an optimal solution of (P) and $\gamma_{n_k} \to \gamma^*$ in the topology of weak convergence of probability measures.

The absence of a duality gap is the main result of this section. Its proof, outlined in Section~\ref{sec-4.3}, uses essentially the same proof arguments for Theorem~\ref{thm-1}(ii), which handle the discontinuous MDP models by making use of Lusin's theorem together with the majorziation property in Assumption~\ref{cond-su-cmdp}(M).

%\smallskip
\begin{theorem}[consistency and absence of a duality gap] \label{thm-4.2}
Under Assumption~\ref{cond-su-cmdp}, the following hold for the linear programs (P) and (\Pstar) given in (\ref{eq-cmdp-P}) and (\ref{eq-cmdp-dP}):
\begin{itemize}[leftmargin=0.7cm,labelwidth=!]
\item[\rm (i)] (P) is consistent and solvable, and (\Pstar) is consistent.
\item[\rm (ii)] There is no duality gap: $\inf (\text{P})  = \sup (\Pstar) = \rho^*_c$.
\end{itemize}
\end{theorem}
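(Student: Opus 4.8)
The plan is to reduce Theorem~\ref{thm-4.2} to the already-proved unconstrained duality result of Theorem~\ref{thm-1} by treating the duality theorem~\ref{thm-subc} as the central tool and re-using the Lusin-theorem machinery essentially verbatim. First I would dispose of part~(i) quickly: Theorem~\ref{thm-4.1}(ii) furnishes a stationary optimal pair $(\mu^*, p^*) \in \Delta_s \cap \S$; passing to the associated occupation measure $\gamma^*$ via (\ref{eq-gamma-mup}) and setting $\alpha^* := \kappa - \big(\langle \gamma^*, c_1\rangle, \ldots, \langle \gamma^*, c_d\rangle\big) \geq 0$ gives a feasible, indeed optimal, solution $(\gamma^*, \alpha^*)$ of (P), so (P) is consistent and solvable with $\inf(\text{P}) = \rho_c^*$. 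Consistency of (\Pstar) is immediate by taking $\rho = 0$, $h \equiv 0$, $\beta = 0$, since $c_0 \geq 0$ forces $-\L^*(0,0,0) + (c_0, 0) = (c_0, 0) \in \Fn_w^+(\Gamma) \times \R_+^d$. Elementary weak duality (cf.\ Section~\ref{sec-2.2}) then already yields $\sup(\Pstar) \leq \inf(\text{P}) = \rho_c^*$.

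For part~(ii), the strategy is to invoke Theorem~\ref{thm-subc}: since (\Pstar) is consistent, its value equals the subvalue $\urho$ of (P) precisely when (P) is subconsistent with finite subvalue, and it suffices to show the single point $\big(b, \rho_c^*\big) = \big((1, \m0, \kappa), \rho_c^*\big)$ lies in the weak closure $\overbar H$ of the set $H \subset \big(\R \times \Me(\X) \times \R^d\big) \times \R$. Concretely I would show $\text{subvalue}(\text{P}) = \inf(\text{P}) = \rho_c^*$, which combined with weak duality closes the gap. The only nontrivial inequality is $\text{subvalue}(\text{P}) \geq \inf(\text{P})$: one must verify that any net $\{(\gamma_i, \alpha_i)\}$ with $(\gamma_i, \alpha_i) \in \Lambda = \Me_w^+(\Gamma) \times \R_+^d$, $r_i \geq 0$, and $\L(\gamma_i, \alpha_i) \to (1, \m0, \kappa)$, $\langle (\gamma_i, \alpha_i), (c_0, 0)\rangle + r_i \to \urho$ in the weak topologies, cannot produce a limit below $\rho_c^*$. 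This is where the strict unboundedness of \emph{some} $c_i$ (Assumption~\ref{cond-su-cmdp}(SU)) and the majorization condition~(M) enter: as in the unconstrained proof, (SU) provides tightness of the marginals $\hat\gamma_i$ so that along a subnet $\gamma_i$ converges weakly to some probability measure $\gamma^*$, while (M) together with Lusin's theorem is used to pass to the limit in the defining relation (\ref{eq-inv-gamma}), i.e.\ to show the limiting constraint $\L_1\gamma^* = \m0$ survives the discontinuity of $q(\cdot\mid x,a)$ in $x$.

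The main obstacle, and the one step I would flag as requiring real care rather than being routine, is handling the additional constraint coordinate $\L_2$ together with the slack variables $\alpha_i \in \R_+^d$ in the limit. Unlike $\L_1$, the map $\L_2(\gamma,\alpha) = \big(\langle \gamma, c_1\rangle, \ldots, \langle \gamma, c_d\rangle\big) + \alpha$ involves the integrals $\int c_i\, d\gamma$, and weak convergence of probability measures does \emph{not} automatically preserve these integrals when $c_i$ is merely nonnegative, lower semicontinuous on $D \times \A$ and possibly unbounded off $\Gamma_j$. The plan here is to exploit the fact that, by construction of the weight $w = 1 + \sup_i c_i$, the objective and all constraint cost integrals are controlled by $\langle \gamma_i, w\rangle$; together with the uniform integrability afforded by (SU) for the strictly unbounded coordinate, one obtains $\liminf_i \langle \gamma_i, c_i\rangle \geq \langle \gamma^*, c_i\rangle$ for each $i$, and since the $\alpha_i \geq 0$ absorb only a nonnegative slack, the limiting $\gamma^*$ remains feasible, namely $\langle \gamma^*, c_i\rangle \leq \kappa_i$. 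Establishing these lower-semicontinuity-of-integral estimates off the continuous part $D$ is exactly where Lusin's theorem is applied to each $c_i$ simultaneously, which is why Assumption~\ref{cond-su-cmdp}(M) demands that \emph{every} $c_i$ be lower semicontinuous on $D$. Once $\gamma^*$ is shown feasible with $\langle \gamma^*, c_0\rangle \leq \urho$, optimality of $\rho_c^*$ forces $\urho \geq \rho_c^*$, and the chain $\rho_c^* \leq \urho = \sup(\Pstar) \leq \inf(\text{P}) = \rho_c^*$ collapses to equality, giving the claimed absence of a duality gap.
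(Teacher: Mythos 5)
Your proposal is correct and follows essentially the same route as the paper's proof: part (i) via Theorem~\ref{thm-4.1} and the trivial feasible point of (\Pstar), and part (ii) by invoking Theorem~\ref{thm-subc} to identify $\sup(\Pstar)$ with the subvalue $\urho$, then extracting from the subvalue-defining net a weakly convergent (sub)sequence via tightness from (SU) and using the Lusin/majorization machinery applied to every $c_i$ to produce a feasible stationary pair with $J_0 \leq \urho$, forcing $\urho = \rho_c^*$. Your flagged concern about the $\L_2$ coordinate and the slack variables is exactly the point the paper handles by running the lower-semicontinuity-of-integral argument for all $d+1$ cost functions simultaneously.
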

%\smallskip

This theorem is comparable with the prior results \cite[Thm.~4.4]{HGL03} and \cite[Lem.~2.3]{KNH00} on the LP approach for constrained lower-semicontinuous MDPs (\cite{KNH00} considers compact spaces, and \cite{HGL03} non-compact spaces). Besides the differences in model assumptions, our formulation of the dual program (\Pstar) also differs from that in \cite{HGL03}. The main difference lies in the choice of the spaces $\Me(\X)$ and $\Fn_{b}(\X)$ for (\Pstar). As in the unconstrained case, our motivation for this choice is to avoid an extra condition on the state transition stochastic kernel used in \cite{HGL03}, which is the same condition (\ref{eq-excond-q}) from \cite[Chap.\ 12.3]{HL99} that we discussed earlier in Remark~\ref{rem-LP}. For the same reason as explained in Remark~\ref{rem-LP}, the dual program (\Pstar) as we formulated above need not admit an optimal solution.

For completeness, in the rest of this section, we discuss some solution properties of the dual program (\Pstar) and derive a version of ACOE for the constrained MDP. 
Consider a maximizing sequence $\{(\rho_n, h_n, \beta_n)\}$ of (\Pstar), i.e., feasible solutions of (\Pstar) with $\rho_n + \langle \kappa, \, \beta_n \rangle \uparrow \sup (\Pstar)$.
We first examine the boundedness property of $\{\beta_n\}$. Denote by $\beta_{n,j}$ the $j$th component of $\beta_n$.
Let us separate the constraints of the MDP into two categories:
\begin{equation}
 \J^{(0)} : = \big\{ i \mid 1 \leq i \leq d, \ \exists\, (\pi, \zeta) \in \S \ s.t.\ J_i(\pi, \zeta) < \kappa_i \big\}, \qquad \J^{(1)} : = \{ 1, 2, \ldots, d\} \setminus \J^{(0)}. \quad
\end{equation} 
When $\S \not= \varnothing$, $\J^{(1)}$ consists of all those $i$ such that w.r.t.\ $c_i$, every feasible pair in $\S$ has the same maximally allowed average cost $\kappa_i$. 

%\smallskip
\begin{prop}  \label{prp-4.3}
Suppose Assumption~\ref{cond-su-cmdp} hold. Let $\{(\rho_n, h_n, \beta_n)\}$ be a maximizing sequence of the dual program (\Pstar).
Then the following hold:
\begin{itemize}
\item[\rm (i)] The sequence $\{\beta_{n,j}\}_{n \geq 0}$ is bounded for every $j \in \J^{(0)}$.
\item[\rm (ii)] For $1 \leq j \leq d$, $\lim_{n \to \infty} \beta_{n,j} = 0$ if $J_j(\mu^*, p^*) < \kappa_j$ for some stationary optimal pair $(\mu^*, p^*)$ of the constrained MDP. 
\item[\rm (iii)] Suppose there exists $(\pi, \zeta) \in \Pi \times \P(\X)$ such that 
$$  J_j(\pi, \zeta) < \kappa_j \quad \forall \, j \in \J^{(1)}, \qquad J_j(\pi, \zeta) < \infty \quad \forall \, j \in \J^{(0)} \cup \{ 0\}.$$ 
Then the sequence $\{\beta_{n}\}_{n \geq 0}$ is bounded. 
\end{itemize}
\end{prop}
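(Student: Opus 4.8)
The common engine for all three parts is a single inequality obtained by testing dual feasibility against occupation measures of stationary pairs. The plan is to take any stationary pair $(\mu,p)\in\Delta_s$ with finite average costs, with associated occupation measure $\gamma$ (so $\gamma$ obeys the invariance relation (\ref{eq-inv-gamma}) and $\langle\gamma,c_i\rangle = J_i(\mu,p)$), and integrate the dual constraint (\ref{eq-cmdp-dual-cstr1}) against $\gamma$. Since each $h_n$ is \emph{bounded} and $\gamma$ is a probability measure concentrated on $\Gamma$, Fubini's theorem combined with (\ref{eq-inv-gamma}) makes the two $h_n$-terms cancel:
$$\int_\Gamma h_n(x)\,\gamma(d(x,a)) = \int_\Gamma\!\Big(\int_\X h_n(y)\,q(dy\mid x,a)\Big)\gamma(d(x,a)).$$
What remains is the clean inequality
$$\rho_n + \sum_{i=1}^d \beta_{n,i}\,J_i(\mu,p) \;\le\; J_0(\mu,p), \qquad (\mu,p)\in\Delta_s,\ J_i(\mu,p)<\infty \qquad (\star)$$
Writing $V_n := \rho_n + \sum_{i=1}^d \beta_{n,i}\kappa_i$ for the dual objective, the maximizing-sequence property together with Theorem~\ref{thm-4.2} gives $V_0\le V_n\uparrow\sup(\Pstar)=\rho_c^*$. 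Subtracting $V_n$ rearranges $(\star)$ into $\sum_{i=1}^d \beta_{n,i}(J_i(\mu,p)-\kappa_i)\le J_0(\mu,p)-V_n$, and whenever $(\mu,p)\in\S$ \emph{every} summand is nonnegative, because $\beta_{n,i}\le 0$ by (\ref{eq-cmdp-dual-cstr2}) and $J_i(\mu,p)-\kappa_i\le 0$.

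For part (i), fix $j\in\J^{(0)}$. By definition there is $(\pi,\zeta)\in\S$ with $J_j(\pi,\zeta)<\kappa_j$, and Theorem~\ref{thm-4.1}(i) produces a stationary feasible pair $(\bar\mu,\bar p)\in\Delta_s\cap\S$ dominating it coordinatewise, so $J_j(\bar\mu,\bar p)<\kappa_j$. Applying the rearranged inequality to $(\bar\mu,\bar p)$ and discarding all nonnegative summands except the $j$-th yields $0\le \beta_{n,j}(J_j(\bar\mu,\bar p)-\kappa_j)\le J_0(\bar\mu,\bar p)-V_0$; dividing by the fixed negative number $J_j(\bar\mu,\bar p)-\kappa_j$ bounds $\beta_{n,j}$ from below, and $\beta_{n,j}\le 0$ finishes it. Part (ii) runs the same computation with the stationary optimal pair $(\mu^*,p^*)$, for which $J_0(\mu^*,p^*)=\rho_c^*$; then the right-hand side $J_0(\mu^*,p^*)-V_n=\rho_c^*-V_n\to 0$, so the nonnegative $j$-th summand is squeezed to zero, and since $J_j(\mu^*,p^*)-\kappa_j$ is a fixed negative constant we conclude $\beta_{n,j}\to 0$.

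For part (iii), I would first manufacture one stationary pair strictly feasible in \emph{all} $d$ constraints, reducing (iii) to a uniform version of the (i)/(ii) bound. Averaging the occupation measures of the pairs supplied by part (i), one per index of $\J^{(0)}$, gives by convexity of the occupation-measure set and linearity of $\gamma\mapsto\langle\gamma,c_i\rangle$ a stationary feasible pair $\bar\gamma$ that is strict on all of $\J^{(0)}$ and tight on $\J^{(1)}$. Converting the hypothesized pair $(\pi,\zeta)$ — all of whose average costs are finite — into a dominating stationary pair $\tilde\gamma$ makes $\tilde\gamma$ strict on $\J^{(1)}$. A convex combination $\lambda\bar\gamma+(1-\lambda)\tilde\gamma$ with $\lambda\in(0,1)$ close to $1$ is then strictly feasible in every constraint. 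Feeding this Slater stationary pair into the rearranged inequality and isolating each summand, exactly as in part (i), bounds every $\beta_{n,j}$ from below, giving boundedness of $\{\beta_n\}$.

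The main obstacle is the conversion step in part (iii): Theorem~\ref{thm-4.1}(i) is stated only for pairs in $\S$, whereas the Slater pair $(\pi,\zeta)$ need \emph{not} lie in $\S$ (it may violate the $\J^{(0)}$ constraints). I would therefore invoke the construction of \cite{Yu19-minp} underlying that theorem directly — it sends any pair with finite one-stage average costs to a coordinatewise-dominating stationary pair — and check that only the cost-domination $J_i(\tilde\mu,\tilde p)\le J_i(\pi,\zeta)$, not the feasibility conclusion, is used. The only other delicate point is the derivation of $(\star)$ itself, which rests on justifying the Fubini interchange and the invariance cancellation; this is routine precisely because the $h_n$ are bounded and the $\gamma$'s are probability measures.
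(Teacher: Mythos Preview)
Your engine inequality $(\star)$ and its use in parts (i) and (ii) match the paper's proof essentially line for line; the only cosmetic difference is that the paper passes to the limit and uses $\rho_c^*$ on the left, whereas you keep $V_n$ (or $V_0$) finite throughout, which is equally valid.

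For part (iii) you take a genuinely different route. The paper converts the hypothesized pair $(\pi,\zeta)$ into a \emph{single} stationary pair $(\bar\mu,\bar p)$ (by applying Theorem~\ref{thm-4.1}(i) with a modified constraint set $\S'$ whose thresholds are $J_i(\pi,\zeta)$, which neatly sidesteps the ``$(\pi,\zeta)\notin\S$'' obstacle you flagged), and then works with this one pair, which is strict only on $\J^{(1)}$. Plugging it into $(\star)$ and splitting the sum into its $\J^{(0)}$- and $\J^{(1)}$-parts, the paper bounds the $\J^{(0)}$-part using the already-established boundedness from (i), forcing the $\J^{(1)}$-part to be bounded below. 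Your approach instead manufactures a full Slater stationary pair by convexly combining (a) an average of the strict-on-$\J^{(0)}$ feasible occupation measures from part (i) with (b) the strict-on-$\J^{(1)}$ stationary pair coming from $(\pi,\zeta)$, and then runs the part-(i) argument once for every coordinate. Both arguments are correct; the paper's avoids the convexity/averaging construction at the cost of the splitting step, while yours gives a cleaner single-pair endgame at the cost of building the Slater point (and of handling the edge case $\J^{(0)}=\varnothing$, where one should simply take $\tilde\gamma$ itself). Your recognition that Theorem~\ref{thm-4.1}(i) must be invoked outside $\S$ is exactly right; the paper's device of changing $\S$ to $\S'$ is a slightly slicker way to phrase the same fix you propose.
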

%\smallskip

\begin{rem} \rm 
An optimal solution $(\gamma^*, \alpha^*)$ of (P) corresponds to a stationary optimal pair $(\mu^*, p^*)$ with $\alpha^*_j = \kappa_j - J_j(\mu^*, p^*)$ for $1 \leq j \leq d$ (this follows from the correspondence relationship explained at the beginning of Section~\ref{sec-3.2}). So Prop.~\ref{prp-4.3}(ii) entails the complementarity relation
$\langle \alpha^*, \, \beta^* \rangle = 0$ for an optimal solution $(\gamma^*, \alpha^*)$ of (P), if we define $\beta^* = (\beta^*_1, \ldots, \beta^*_d)$ as follows: $\beta^*_j = \lim_{n \to \infty} \beta_{n,j}$ if this limit exists, and assign $\beta^*_j$ an arbitrary number otherwise.
Proposition~\ref{prp-4.3}(iii) gives a sufficient condition under which the $\J^{(1)}$-components of $\{\beta_n\}$ are also bounded---note that this condition involves \emph{non-feasible} policy and initial distribution pairs and is different from the Slater condition $J_i(\pi, \zeta) < \kappa_i$, $1 \leq i \leq d$. One exceptional case where Prop.~\ref{prp-4.3} is inapplicable is when $\kappa = 0$.
\myqed
\end{rem}
%\medskip

When $\{\beta_{n}\}_{n \geq 0}$ is bounded, as when the condition of Prop.~\ref{prp-4.3}(iii) holds, we can choose a subsequence of the maximizing sequence $\{(\rho_n, h_n, \beta_n)\}$ so that $\beta_n$ converges. The subsequence is obviously also a maximizing sequence for (\Pstar). Then, with additional assumptions on the functions $h_n$, we can derive an optimality equation for the constrained MDP that is analogous to the ACOE (\ref{eq-ae-acoe}) in Prop.~\ref{prp-2} for the unconstrained MDP. We state this result in the next proposition. It is comparable with the result of \cite[Thm.~5.2(b)]{HGL03} for constrained lower-semicontinuous MDPs; in the latter reference, (\ref{eq-cmdp-ae-acoe}) is called the ``constrained optimality equation.''

\begin{prop}[ACOE for $p^*\!$-a.a.\ states in the constrained MDP] \label{prp-4.4} \ \\
Let $\{(\rho_n, h_n, \beta_n)\}$ be a maximizing sequence of the dual program (\Pstar), and let $h^* = \limsup_{n \to \infty} h_n$.
Suppose that:
\begin{itemize}
\item[\rm (i)] a stationary optimal pair $(\mu^*, p^*)$ exists and $\inf (\text{P})  = \sup (\Pstar) = \rho^*_c < + \infty$;
\item[\rm (ii)] the functions $h_n$ satisfy that 
$$ \int_\X | h^* | \, d p^* < + \infty,  \qquad \int_\X \sup_{n \geq 0} | h_n(y) | \, q(dy  \mid x, a) < + \infty \ \ \ \forall \, (x,a) \in \Gamma;$$
\item[\rm (iii)] the sequence $\{\beta_n\}$ converges to some finite $\beta^*$.
\end{itemize}
Then $h^*$ is finite everywhere and with
$$ c^*(x,a) : = c_0(x,a) - \textstyle{\sum_{i=1}^d} \, \beta^*_i c_i(x,a), \qquad \tilde{\rho}^* : = \rho^*_c - \sum_{i=1}^d \beta^*_i \kappa_i,$$
we have
\begin{equation} \label{eq-cmdp-dsol}
  \tilde{\rho}^* + h^*(x) \leq c^*(x,a) +  \int_\X h^*(y) \, q(dy \mid x, a), \qquad \forall \, (x,a) \in \Gamma,
\end{equation}
and for $p^*$\!-a.a.\ $x \in \X$,
\begin{align} \label{eq-cmdp-ae-acoe}
 \tilde{\rho}^* + h^*(x) & = \inf_{a \in A(x)} \left\{ c^*(x,a) + \int_\X h^*(y) \, q(dy \mid x, a) \right\}  \\
   & = \int_{a \in A(x)} \left\{ c^*(x,a) + \int_\X h^*(y) \, q(dy \mid x, a) \right\}  \mu^*(da \mid x). \label{eq-cmdp-ae-acoe2}
\end{align}
\end{prop}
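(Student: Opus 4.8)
The plan is to adapt the argument sketched for the unconstrained case in Proposition~\ref{prp-2}, reducing the constrained optimality equation to an unconstrained-type ACOE through the multipliers $\beta^*$. First I would fix the maximizing sequence $\{(\rho_n, h_n, \beta_n)\}$ and pass to the subsequence along which $\beta_n \to \beta^*$ (available by condition (iii)); since passing to a subsequence does not change $h^* = \limsup_n h_n$ in the relevant sense once we also control the $h_n$ via condition (ii), I would argue that we may work with the convergent-$\beta$ sequence throughout. Each $(\rho_n, h_n, \beta_n)$ satisfies the dual feasibility inequality~(\ref{eq-cmdp-dual-cstr1}), namely
\begin{equation*}
  \rho_n + h_n(x) - \int_\X h_n(y)\, q(dy \mid x,a) + \sum_{i=1}^d \beta_{n,i}\, c_i(x,a) \ \leq \ c_0(x,a), \qquad \forall\,(x,a)\in\Gamma.
\end{equation*}
Rearranging and using the definition $c^*(x,a) = c_0(x,a) - \sum_{i=1}^d \beta^*_i c_i(x,a)$, I would rewrite this as $\rho_n + h_n(x) - \int_\X h_n(y)\, q(dy\mid x,a) \le c^*(x,a) + \sum_{i=1}^d (\beta^*_i - \beta_{n,i}) c_i(x,a)$, so that the error term vanishes in the limit at each fixed $(x,a)$ because $\beta_n \to \beta^*$ and each $c_i(x,a)$ is finite on $\Gamma$.

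Next I would take $\limsup_{n\to\infty}$ on both sides. On the left, $h_n(x)$ limsups to $h^*(x)$; the delicate piece is the integral term $\int_\X h_n(y)\, q(dy\mid x,a)$, which I would handle exactly as in Proposition~\ref{prp-2}. Condition (ii) supplies the uniform integrable majorant $\sup_{n\ge 0}|h_n(y)|$ against $q(\cdot\mid x,a)$, which licenses a reverse-Fatou / dominated-convergence argument to pass the $\limsup$ inside the integral and obtain $\limsup_n \int_\X h_n\, q(dy\mid x,a) \le \int_\X h^*(y)\, q(dy\mid x,a)$ (taking $h^* = \limsup_n h_n$). Using $\rho_n + \langle \kappa, \beta_n\rangle \uparrow \sup(\Pstar) = \rho^*_c$ together with $\beta_n \to \beta^*$ gives $\rho_n \to \rho^*_c - \sum_i \beta^*_i \kappa_i = \tilde{\rho}^*$. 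Collecting these limits yields the pointwise inequality~(\ref{eq-cmdp-dsol}): $\tilde{\rho}^* + h^*(x) \le c^*(x,a) + \int_\X h^*(y)\, q(dy\mid x,a)$ for every $(x,a)\in\Gamma$. Finiteness of $h^*$ everywhere follows from this inequality once $\int h^* \, q$ is controlled by the majorant in (ii) and $\tilde\rho^*$, $c^*$ are finite.

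To upgrade the inequality to the equalities~(\ref{eq-cmdp-ae-acoe})--(\ref{eq-cmdp-ae-acoe2}) for $p^*$-a.a.\ $x$, I would integrate the established inequality against the stationary optimal pair $(\mu^*, p^*)$. Writing $\gamma^*(d(x,a)) = \mu^*(da\mid x)\, p^*(dx)$, the invariance relation~(\ref{eq-inv-gamma}) forces the telescoping terms $\int h^*\, d\hat\gamma^* = \int\!\int h^*(y)\, q(dy\mid x,a)\, \gamma^*(d(x,a))$ to cancel, and by Theorem~\ref{thm-4.1}(ii) together with the complementary-slackness identity $\langle \alpha^*, \beta^*\rangle = 0$ one computes $\int c^*\, d\gamma^* = J_0(\mu^*,p^*) - \sum_i \beta^*_i J_i(\mu^*,p^*) = \rho^*_c - \sum_i \beta^*_i \kappa_i = \tilde{\rho}^*$; here the finite-integrability needed to justify these manipulations comes from $\int |h^*|\, dp^* < \infty$ in (ii) and $\gamma^* \in \Me_w(\Gamma)$. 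Thus integrating~(\ref{eq-cmdp-dsol}) gives $\tilde\rho^* \le \tilde\rho^*$ with equality, which means the nonnegative integrand $c^*(x,a) + \int h^*\, q(dy\mid x,a) - \tilde\rho^* - h^*(x)$ integrates to zero against $\gamma^*$, hence is zero $\gamma^*$-a.e.; this yields~(\ref{eq-cmdp-ae-acoe2}) for $p^*$-a.a.\ $x$, and combined with the pointwise inequality (which shows the bracketed expression is bounded below by $\tilde\rho^* + h^*(x)$ uniformly in $a$) it forces the infimum in~(\ref{eq-ae-acoe}) to be attained and equal, giving~(\ref{eq-cmdp-ae-acoe}). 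The main obstacle I anticipate is the justification of passing $\limsup$ through the transition integral while simultaneously tracking the vanishing $(\beta^*_i - \beta_{n,i})c_i(x,a)$ correction terms: one must ensure the correction does not interfere with the reverse-Fatou bound, which is why the per-point finiteness of $c_i$ on $\Gamma$ and the uniform majorant in (ii) are both essential and must be invoked carefully.
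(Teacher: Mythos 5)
Your proposal follows essentially the same route as the paper's proof: start from the dual feasibility inequality (\ref{eq-cmdp-dual-cstr1}), pass to the limit using $\rho_n + \langle \kappa, \beta_n\rangle \uparrow \rho^*_c$, $\beta_n \to \beta^*$, and the reverse-Fatou argument licensed by the uniform majorant in condition (ii) to obtain (\ref{eq-cmdp-dsol}); then integrate against $\gamma^*(d(x,a)) = \mu^*(da \mid x)\, p^*(dx)$, cancel the $h^*$ terms by invariance, and conclude that the nonnegative integrand vanishes $\gamma^*$-a.e.

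One step is not justified as written: you invoke ``the complementary-slackness identity $\langle \alpha^*, \beta^*\rangle = 0$'' as an input in order to compute $\int c^*\, d\gamma^* = \tilde\rho^*$. That identity is not among the hypotheses of the proposition; in the paper it is only obtained (in a remark) as a consequence of Prop.~\ref{prp-4.3}(ii), which requires Assumption~\ref{cond-su-cmdp} --- an assumption Prop.~\ref{prp-4.4} does not make. The correct move, and what the paper does, is to get the needed equality from signs: dual feasibility (\ref{eq-cmdp-dual-cstr2}) gives $\beta_n \le 0$, hence $\beta^* \le 0$, and feasibility of the stationary optimal pair gives $\kappa_i - J_i(\mu^*, p^*) \ge 0$, so $\sum_{i=1}^d \beta^*_i \bigl(\kappa_i - J_i(\mu^*, p^*)\bigr) \le 0$, i.e., $\int c^*\, d\gamma^* \le \tilde\rho^*$. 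Combined with the integrated inequality $\tilde\rho^* \le \int c^*\, d\gamma^*$ coming from (\ref{eq-cmdp-dsol}), this forces equality and yields complementary slackness as a byproduct rather than using it as a premise. With that one-line repair your argument is complete and coincides with the paper's.
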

%\smallskip

\section{Proofs} \label{sec-5}
This section collects the proofs of the theorems given in Sections~\ref{sec-3} and~\ref{sec-4}.

\subsection{Proofs for Section~\ref{sec-3}}

Let us first recall a few definitions and facts about probability measures on a metrizable space $X$. 
Let $\C_b(X)$ denote the set of real-valued, bounded continuous functions on $X$. By definition, a sequence of probability measures $p_n \in \P(X)$ \emph{converges weakly} to some $p \in \P(X)$, denoted $p_n \wto p$, if $\int f  d p_n \to \int f  dp$ for all $f \in \C_b(X)$.  
If $\mathcal{E}$ is a family of probability measures in $\P(X)$ such that for any $\epsilon > 0$, there is a compact set $K \subset X$ with $p(K) > 1 - \epsilon$ for all $p \in \mathcal{E}$, we say that $\mathcal{E}$ is \emph{tight}. 

By Prohorov's theorem~\cite[Thm.~6.1]{Bil68}, any sequence in a tight family $\mathcal{E}$ has a further subsequence that converges weakly to a probability measure in $\P(X)$. We will use this fact many times in our proofs, for some family $\mathcal{E} \subset \P(\Gamma)$ that satisfies $\sup_{\gamma \in \mathcal{E}} \langle \gamma, \, c \rangle < \infty$. By the strict unboundedness condition on $c$ given in Assumption~\ref{cond-pc-3}(SU), such a family $\mathcal{E}$ must be tight (as can be seen easily from condition (SU) and the definition of tightness).

\subsubsection{Proof of Theorem~\ref{thm-1}}

The consistency of (P) and (\Pstar) and the solvability of (P) were already discussed in Section~\ref{sec-3.1}, where we also showed that under Assumption~\ref{cond-pc-3}, $0 \leq \sup (\Pstar) \leq \inf (\text{P}) = \rho^*$.

We now prove that there is no duality gap between (P) and (\Pstar). Our approach is to use \cite[Thm.~3.3]{AnN87} (cf.\ Theorem~\ref{thm-subc} in Section~\ref{sec-2.2}), which asserts the equality between the subvalue of (P) and the value of (\Pstar) when they are finite. Specifically, recall from Section~\ref{sec-2.2} that 
the subvalue of (P) is defined as
$$ \urho : = \inf \big\{ r \mid \big( (1, \m0), r \big) \in \overbar{H} \big\},$$
where the set $H \subset \R \times \Me(\X) \times \R$ is given by
\begin{equation}
  H : = \big\{ \big(L \gamma, \, \langle \, \gamma, c \, \rangle + r \big) \, \big| \, \gamma \in \Me_w^+(\Gamma), \ r \geq 0 \big\},
\end{equation} 
and $\overbar{H}$ is the closure of $H$ in the weak topology $\sigma \big(\R \times \Me(\X) \times \R, \, \R \times \Fn_b(\X) \times \R \big)$.
Since (P) and (\Pstar) are consistent, $\sup (\Pstar)$ is finite and equals the subvalue $\urho$ by \cite[Thm.~3.3]{AnN87} (cf.\ Theorem~\ref{thm-subc}). So, to show $\inf(\text{P}) = \sup (\Pstar)$, we need to prove $\rho^* = \urho$. In what follows, we will prove that 
$$\big((1, \m0), \, \urho \, \big) \in H,$$
by constructing a stationary pair whose average cost is no greater than $\urho$. 
This will give us $\rho^* = \urho$ (since it implies $\urho \geq \rho^*$, whereas $\rho^* \geq \urho$). The proof will proceed in four steps, with the first three steps making preparations for the last one.

\medskip
\noindent {\bf Step (i):}
From the definition of $\urho$, it follows that $\big((1, \m0), \urho\big) \in \overbar{H}$ and moreover, there exist a direct set $\nI$ and a net $\{\gamma_i\}_{i \in \nI}$ in $\Me_w^+(\Gamma)$ with 
$$ \big(L \gamma_i \,, \, \langle \, \gamma_i, c \, \rangle \big) \to \big((1, \m0), \urho \,\big)$$
in the $\sigma \big(\R \times \Me(\X) \times \R, \, \R \times \Fn_b(\X) \times \R \big)$ topology. This means that 
\begin{align}
   \gamma_i(\Gamma) & \to 1,   \label{eq-thm1-prf1} \\
  \int_\X h(x) \, \hat \gamma_i(dx) - \int_{\Gamma} \int_\X h(y) \, q(dy \mid x, a) \, \gamma_i(d(x,a)) & \to 0, \qquad \forall \, h \in \Fn_b(\X),  \label{eq-thm1-prf2} \\
   \langle \, \gamma_i, c \, \rangle & \to \urho.  \label{eq-thm1-prf3}
\end{align}
In view of (\ref{eq-thm1-prf1}), there exists $\bar i \in \nI$ such that for all $i \geq \bar i$, $\gamma_i(\Gamma) > 0$. Then, since all $\gamma_i$ are nonnegative measures and $\gamma_i(\Gamma) \to 1$, by restricting attention to $\gamma_i, i \geq \bar i$, and considering the normalized measures $\gamma_i(\cdot)/\gamma_i(\Gamma)$ instead of $\gamma_i$, we can redefine the net $\{\gamma_i\}_{i \in \nI}$ in the above so that every $\gamma_i$ is a probability measure on $\B(\Gamma)$:
$$ \gamma_i(\Gamma) = 1, \qquad \forall \,  i \in \nI.$$

\smallskip
\noindent {\bf Step (ii):} Next, from the net $\{\gamma_i\}_{i \in \nI}$, we will extract a \emph{sequence} of probability measures with the property that the convergence in (\ref{eq-thm1-prf2}) holds for a \emph{countable subset} of the functions in $\Fn_b(\X)$. We start by defining this subset. It consists of two countable families of functions, $\hat \C_b(\X)$ and $\hat \Fn_b(\X)$. The set $\hat \C_b(\X)$ involves continuous bounded functions that will be used to determine if two probability measures on $\X$ are equal. The set $\hat \Fn_b(\X)$ involves indicator functions of certain sets in $\X$ that will be important in the subsequent proof to handle the discontinuities in the MDP model by using Lusin's theorem and the majorization property in Assumption~\ref{cond-pc-3}(M). The construction of $\hat \Fn_b(\X)$ will use the arguments we used in the proof of \cite[Thm.~3.5(a)]{Yu19-minp}. The precise definitions of these two sets are as follows.

Recall that $\C_b(\X)$ is the set of (real-valued) bounded continuous functions on $\X$.
Since $\X$ is metrizable, 
by \cite[Chap.~II, Thm.~6.6]{Par67}, there exists a countable set 
$$\hat \C_b(\X) :  = \{h_1, h_2, \ldots\} \subset \C_b(\X)$$ 
such that in $\P(\X)$, a sequence of probability measures $p_n \wto p \in \P(\X)$ if and only if
$$ \textstyle{ \int h \, d p_n \, \to \, \int h \, dp}, \qquad \forall \, h \in \hat \C_b(\X). $$
Then by \cite[Prop.\ 11.3.2]{Dud02}, for any $p,p' \in \P(\X)$,
\begin{equation} \label{eq-detclass}
  p = p' \qquad \Longleftrightarrow \qquad  \textstyle{\int h \, d p \, = \, \int h \, dp'}, \qquad \forall \, h \in \hat \C_b(\X).
\end{equation}   
The countable set $\hat \C_b(\X)$ is the first family of functions we will need.

We now define the other countable family $\hat \Fn_b(\X)$ of indicator functions mentioned earlier.
The definition of this set involves some new notations and Lusin's theorem. 

Let $\Z_+$ denote the set of all positive integers. 
For $m \in \Z_+$, define the truncated one-stage cost function $c^m(\cdot) : = \min \{c(\cdot), m\}$ on $\X \times \A$ (later, a technical argument in Step (iv) of our proof will involve these $c^m$ functions).
For each $j \in \Z_+$, corresponding to the compact set $\Gamma_j$ in Assumption~\ref{cond-pc-3}(SU), 
let $(O_j, D_j, \nu_j)$ be the open set, the closed set, and the finite measure, respectively, in Assumption~\ref{cond-pc-3}(M) for $K = \proj_\X(\Gamma_j)$.
Let 
$F_j : = \proj_\A(\Gamma_j)$, the projection of $\Gamma_j$ on $\A$. Then the set $F_j$ is compact, and since $\A$ is countable and discrete, this means that the set $F_j$ is finite.

%\smallskip
\begin{lemma} \label{lem-lusin}
For each $j,m \in \Z_+$ and $\ell \in \Z_+$, there exist closed subsets $B^1_{j,m,\ell}$ and $B^2_{j,\ell}$ of $\X$ such that the following hold:
\begin{enumerate}[leftmargin=0.7cm,labelwidth=!]
\item[\rm (i)] $\nu_j \big(\X \setminus B^1_{j,m,\ell} \big) \leq \ell^{-1}$ and $\nu_j \big(\X \setminus B^2_{j,\ell} \big) \leq \ell^{-1}$;
\item[\rm (ii)] restricted to the set $B^1_{j,m,\ell} \times F_j$, the function $c^m(\cdot)$ is continuous, and restricted to the set $B^2_{j,\ell} \times F_j$, the state transition stochastic kernel $q(dy \,|\, \cdot,\cdot)$ is continuous.
\end{enumerate}
\end{lemma}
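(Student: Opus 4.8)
The plan is to obtain each of $B^1_{j,m,\ell}$ and $B^2_{j,\ell}$ by a direct application of Lusin's theorem \cite[Thm.~7.5.2]{Dud02} to the finite Borel measure $\nu_j$ on $\X$, handling the action variable by treating each $a \in F_j$ separately and then intersecting the finitely many closed sets so produced. The structural fact I would exploit is that $F_j = \proj_\A(\Gamma_j)$ is \emph{finite} (a compact subset of the countable discrete space $\A$), so that in the product $B \times F_j$ the second coordinate carries the discrete topology on finitely many points. Consequently, a function on $B \times F_j$ is continuous if and only if, for each fixed $a \in F_j$, it is continuous in the first coordinate on $B$. This reduces the lemma to finitely many one-variable Lusin arguments.

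For $B^1_{j,m,\ell}$, fix $m$ and note that $c^m = \min\{c, m\}$ is a bounded Borel measurable function on $\X \times \A$, so for each fixed $a \in F_j$ the section $x \mapsto c^m(x,a)$ is a bounded, real-valued, Borel measurable function on $\X$. Applying Lusin's theorem to this section with the finite measure $\nu_j$ and tolerance $(\ell\,|F_j|)^{-1}$, I obtain a closed set $C^1_a \subset \X$ with $\nu_j(\X \setminus C^1_a) \leq (\ell\,|F_j|)^{-1}$ on which $c^m(\cdot, a)$ is continuous. Setting $B^1_{j,m,\ell} := \bigcap_{a \in F_j} C^1_a$ gives a closed set (a finite intersection of closed sets) with $\nu_j(\X \setminus B^1_{j,m,\ell}) \leq \sum_{a \in F_j} \nu_j(\X \setminus C^1_a) \leq \ell^{-1}$, and by the observation above $c^m$ is continuous on $B^1_{j,m,\ell} \times F_j$. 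For $B^2_{j,\ell}$ I would repeat the construction with $q(dy \mid \cdot, \cdot)$ in place of $c^m$; here the relevant target space is $\P(\X)$ with the topology of weak convergence, which is separable metrizable because $\X$ is a Borel (hence separable metrizable) space, so Lusin's theorem applies to each Borel measurable section $x \mapsto q(dy \mid x, a)$, $a \in F_j$. This yields closed sets $C^2_a$ with $\nu_j(\X \setminus C^2_a) \leq (\ell\,|F_j|)^{-1}$ on which $q(dy \mid \cdot, a)$ is weakly continuous, and $B^2_{j,\ell} := \bigcap_{a \in F_j} C^2_a$ then has the two required properties.

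The substantive points, rather than true obstacles, are two. First, the applicability of Lusin's theorem: this needs $\nu_j$ to be a finite Borel measure on the metrizable space $\X$ (given) and the ranges $\R$ and $\P(\X)$ to be separable metric (both hold), and I would rely on the fact that finite Borel measures on a metric space are inner regular by closed sets, exactly as in the proof of \cite[Thm.~3.5(a)]{Yu19-minp}. Second, and this is the step I would flag as the crux, passing from sectional continuity in $x$ to joint continuity on $B \times F_j$ is precisely where the finiteness and discreteness of $F_j$ are essential: for a general (e.g.\ uncountable) action space both the finite-intersection bound on $\nu_j$ and the sectional-to-joint continuity argument would break down. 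This is the point at which the countable-action-space hypothesis of Assumption~\ref{cond-pc-3} is genuinely used, and it is what makes Lusin's theorem an adequate substitute for the lower-semicontinuous model conditions.
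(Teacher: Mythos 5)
Your proposal is correct and follows essentially the same route as the paper's proof: apply Lusin's theorem to each section $c^m(\cdot,a)$ (resp.\ $q(dy\mid\cdot,a)$) for the finitely many $a \in F_j$ with tolerance $\ell^{-1}/|F_j|$, intersect the resulting closed sets, and use the finiteness and discreteness of $F_j$ to pass from sectional to joint continuity. The auxiliary facts you invoke (closed regularity of finite Borel measures on the metrizable space $\X$, and separability/metrizability of $\P(\X)$) are exactly those cited in the paper.
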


\begin{proof} 
This lemma is a consequence of Lusin's theorem (see~\cite[Thm.~7.5.2]{Dud02}), which asserts that if $f$ is a Borel measurable function from a topological space $X$ into a separable metric space $S$ and $\nu$ is a closed regular finite Borel measure on $X$, then for any $\delta > 0$, there is a closed set $B$ such that $\nu(X \setminus B) < \delta$ and the restriction of $f$ to $B$ is continuous. 

We apply this theorem with $X = \X$ and $\nu = \nu_j$ for each $j$ in the lemma. 
Since $\X$ is a metrizable topological space, every finite Borel measure is closed regular by \cite[Thm.~7.1.3]{Dud02}, and therefore, the finite measure $\nu_j$ in the lemma meets the condition in Lusin's theorem.

For each $j, m, \ell \in \Z_+$, to find the desired closed set $B^1_{j,m,\ell}$,
we apply Lusin's theorem with $X = \X$, $S = \R$, $\nu = \nu_j$ and $\delta = \ell^{-1}/|F_j|$, and with the function $f(\cdot) = c^m(\cdot, a)$ for each action $a \in F_j$. This gives us, for each $a \in F_j$, a closed set $E_a$ such that $\nu_j(\X \setminus E_a) < \delta$ and restricted to $E_a$, $c^m(\cdot, a)$ is continuous. Then the closed set $B^1_{j,m,\ell} : = \cap_{a \in F_j} E_a$ has the desired property that $\nu_j \big(\X \setminus B^1_{j,m,\ell} \big) \leq \ell^{-1}$ and restricted to $B^1_{j,m,\ell} \times F_j$, $c^m(\cdot, \cdot)$ is continuous.

For each $j, \ell \in \Z_+$, the desired closed set $B^2_{j,\ell}$ is constructed similarly, by applying Lusin's theorem to the state transition stochastic kernel $q(dy \,|\, x, a)$, which is a $\P(\X)$-valued Borel measurable function on $\X \times \A$. Specifically, we let $X = \X$, $S = \P(\X)$, $\nu = \nu_j$, and $\delta = \ell^{-1}/|F_j|$. (Since $\X$ is separable and metrizable, by \cite[Prop.~7.20]{bs}, $\P(\X)$ is also a separable metrizable space and hence meets the condition for the space $S$ in Lusin's theorem.)
We apply Lusin's theorem to $f(\cdot) =  q(dy \,|\, \cdot, a)$ for each $a \in F_j$ to obtain a closed set $E_a$ such that  $\nu_j(\X \setminus E_a) < \delta$ and restricted to $E_a$, $q(dy \,|\, \cdot, a)$ is continuous. We then let the desired set $B^2_{j,\ell} = \cap_{a \in F_j} E_a$.
\end{proof}
%\medskip

We group $(O_j, D_j, \nu_j, B^1_{j,m,\ell})$, $(O_j, D_j, \nu_j, B^2_{j,\ell})$ in the preceding proof into two countable collections $\W_1$ and $\W_2$:
$$ \W_1 := \big\{ (O_j, D_j, \nu_j, B^1_{j,m,\ell}) \mid j,m,\ell \in \Z_+ \big\}, \qquad   \W_2 := \big\{ (O_j, D_j, \nu_j, B^2_{j,\ell}) \mid j,\ell \in \Z_+ \big\}. $$
Let $\ind_E$ denote the indicator function for a set $E$. 
Finally, define a countable set $\hat \Fn_b(\X)$ of indicator functions on $\X$ by
\begin{equation} \label{eq-def-indfn}
    \hat \Fn_b(\X) : = \big\{ \ind_E(\cdot) \mid  E = (O \setminus D) \cap B^c \ \, \text{for some} \ \, (O, D, \nu, B) \in \W_1 \cup \W_2 \big\}.
\end{equation}
Note that the sets $E$ in (\ref{eq-def-indfn}) are open sets (since $O$ is open and $D, B$ are closed); this fact will be useful later.

We now extract a desirable sequence from the net $\{\gamma_i\}_{i \in \nI}$:

%\smallskip
\begin{lemma} \label{lem-net2seq}
There exists a sequence $\{\gamma_n\}_{n \geq 0} \subset \{\gamma_i\}_{i \in \nI}$ such that
\begin{align}
  \int_\X h(x) \, \hat \gamma_n(dx) - \int_{\Gamma} \int_\X h(y) \, q(dy \mid x, a) \, \gamma_n(d(x,a)) & \to 0, \qquad \forall \, h \in \hat \C_b(\X) \cup \hat \Fn_b(\X),  \label{eq-thm1-prf2a} \\
   \langle \, \gamma_n, c \, \rangle  & \to \urho.  \label{eq-thm1-prf3a}
\end{align}
\end{lemma}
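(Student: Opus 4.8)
The plan is to perform a diagonal extraction from the net $\{\gamma_i\}_{i\in\nI}$ that exploits the countability of the test-function family together with the directedness of $\nI$. First I would note that every function in the countable collection $\hat\C_b(\X)\cup\hat\Fn_b(\X)$ belongs to $\Fn_b(\X)$: the members of $\hat\C_b(\X)$ are bounded and continuous, while the members of $\hat\Fn_b(\X)$ are indicator functions of the open sets $E=(O\setminus D)\cap B^c$, hence bounded and Borel measurable. Consequently the net-convergence (\ref{eq-thm1-prf2}) applies to each of them: writing $\Phi(\gamma,h):=\int_\X h\,d\hat\gamma-\int_\Gamma\int_\X h(y)\,q(dy\mid x,a)\,\gamma(d(x,a))$, we have $\Phi(\gamma_i,h)\to 0$ along the net for every $h$ in the collection, and by (\ref{eq-thm1-prf3}) we also have $\langle\gamma_i,c\rangle\to\urho$.

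Next I would enumerate the collection as $\hat\C_b(\X)\cup\hat\Fn_b(\X)=\{g_1,g_2,\ldots\}$ and build an index sequence $\{j_n\}\subset\nI$ as follows. Fix $n\in\Z_+$. By the meaning of net convergence, for each $k\in\{1,\ldots,n\}$ there is $i_{n,k}\in\nI$ with $|\Phi(\gamma_i,g_k)|<1/n$ for all $i\ge i_{n,k}$, and there is $i_{n,0}\in\nI$ with $|\langle\gamma_i,c\rangle-\urho|<1/n$ for all $i\ge i_{n,0}$. Since $\nI$ is directed, I can choose a single $j_n\in\nI$ dominating the finitely many indices $i_{n,0},i_{n,1},\ldots,i_{n,n}$. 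Putting $\gamma_n:=\gamma_{j_n}$ then produces a sequence drawn from the net.

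To verify the conclusion, fix any $g_k$ in the collection. For every $n\ge k$ the choice $j_n\ge i_{n,k}$ gives $|\Phi(\gamma_n,g_k)|<1/n$, whence $\Phi(\gamma_n,g_k)\to0$, which is (\ref{eq-thm1-prf2a}); and $j_n\ge i_{n,0}$ gives $|\langle\gamma_n,c\rangle-\urho|<1/n$, whence $\langle\gamma_n,c\rangle\to\urho$, which is (\ref{eq-thm1-prf3a}).

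The only genuinely delicate point, and the one I would be most careful to state correctly, is that $\nI$ is merely a directed index set with no countable cofinal structure assumed, so there is no ``subsequence of the net'' in the naive sense. What makes the extraction legitimate is precisely that we require convergence for only countably many functions $g_k$ (plus the cost functional): the standard finite-domination argument against a directed set then yields an honest sequence $\{\gamma_n\}$ that simultaneously realizes all the desired limits, which is exactly what is needed to later invoke Prohorov's theorem and $\limsup$-type arguments.
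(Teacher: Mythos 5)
Your proposal is correct and is essentially the paper's own argument: both enumerate the countable family $\hat \C_b(\X) \cup \hat \Fn_b(\X)$, use the net convergences (\ref{eq-thm1-prf2})--(\ref{eq-thm1-prf3}) to obtain, for each $n$, finitely many indices beyond which the first $n$ test functions and the cost functional are within $1/n$ of their limits, and then use directedness of $\nI$ to dominate them by a single index $j_n$, setting $\gamma_n := \gamma_{j_n}$. Your closing remark about why a genuine sequence can be extracted despite $\nI$ having no assumed countable cofinal structure is exactly the point the paper's construction relies on.
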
 

\begin{proof}
Let us order the functions in the countable set $\hat \C_b(\X) \cup \hat \Fn_b(\X)$ as $h_1, h_2, \ldots$. 
Choose any $\bar i_0 \in \nI$ and let $\gamma_n = \gamma_{\bar i_0}$ for $n = 0$. 
For each $n \geq 1$, by (\ref{eq-thm1-prf2})-(\ref{eq-thm1-prf3}), there exists $\bar i_n \in \nI$, $\bar i_n \geq \bar i_{n-1}$ such that for all $i \geq \bar i_n$, 
\begin{align*}
 \left| \, \int_\X h(x) \, \hat \gamma_i(dx) - \int_{\Gamma} \int_\X h(y) \, q(dy \mid x, a) \, \gamma_i(d(x,a)) \, \right| & \leq  n^{-1}, \qquad \forall \,  h \in \big\{ h_1, h_2, \, \ldots, h_n \big\},  \\
 \urho - n^{-1} \leq  \langle \, \gamma_i, c \, \rangle  & \leq \urho+ n^{-1}.  
\end{align*}
Let $\gamma_n = \gamma_{\bar i_n}$.  The resulting sequence $\{\gamma_n\}_{n \geq 0}$ satisfies (\ref{eq-thm1-prf2a})-(\ref{eq-thm1-prf3a}).
\end{proof}

\smallskip
\noindent {\bf Step (iii):} Henceforth, we work with the sequence $\{\gamma_n\}$ of probability measures given by Lemma~\ref{lem-net2seq}.
The relation (\ref{eq-thm1-prf3a}) together with Assumption~\ref{cond-pc-3}(SU) implies that $\{\gamma_n\}$ is a tight family of probability measures on $\B(\Gamma)$. So by Prohorov's theorem \cite[Thm.~6.1]{Bil68}, it has a subsequence that converges weakly to some probability measure $\bar \gamma$ on $\B(\Gamma)$. To simplify notation, let us use the same notation $\{\gamma_n\}$ to denote the convergent subsequence. Thus $\gamma_n \wto \bar \gamma$.

By \cite[Cor.~7.27.2]{bs}, the probability measure $\bar \gamma$ 
can be decomposed into its marginal $\bar p$ on $\X$ and a stochastic kernel $\bar \mu$ on $\A$ given $\X$ that obeys the control constraint of the MDP; i.e.,
$$  \bar \gamma(d(x,a)) = \bar \mu(da \mid x) \, \bar p(dx). $$
This gives us a stationary policy $\bar \mu$. Before we investigate the property of the pair $(\bar \mu, \bar p)$ in the next step, we need the following majorization property, which will be used to deal with the discontinuities in the MDP model:

\begin{lemma} \label{lem-su-mp}
For every $(O, D, \nu, B) \in \W_1 \cup \W_2$,
$$ \limsup_{n \to \infty} \, \hat \gamma_{n} \big((O \setminus D)\cap B^c \big) \leq \nu(B^c), \qquad   \bar p\big( (O \setminus D)\cap B^c \big) \leq \nu(B^c). $$
\end{lemma}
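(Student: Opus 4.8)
The plan is to cash in two ingredients assembled earlier: the deliberate inclusion of the indicator $\ind_E$ in the countable family $\hat \Fn_b(\X)$, so that the convergence (\ref{eq-thm1-prf2a}) is available for it, and the majorization property (\ref{eq-maj-minpair}). Fix an arbitrary $(O, D, \nu, B) \in \W_1 \cup \W_2$ and set $E := (O \setminus D) \cap B^c$. By the definition (\ref{eq-def-indfn}) we have $\ind_E \in \hat \Fn_b(\X)$, so I may substitute $h = \ind_E$ into (\ref{eq-thm1-prf2a}); using $\int_\X \ind_E(y)\, q(dy \mid x,a) = q(E \mid x,a)$ this gives
$$ \hat \gamma_n(E) - \int_\Gamma q(E \mid x,a)\, \gamma_n(d(x,a)) \to 0. $$
This reduces the first inequality to bounding the transition term uniformly in $n$.

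First I would control that transition term via majorization. Since $(O, D, \nu) = (O_j, D_j, \nu_j)$ for some $j$, associated with the compact set $K = \proj_\X(\Gamma_j)$, the measure $\nu$ majorizes $q\big((O \setminus D) \cap\, \cdot \mid x,a\big)$ as in (\ref{eq-maj-minpair}); taking the Borel set there to be $B^c$ yields $q(E \mid x,a) = q\big((O \setminus D) \cap B^c \mid x,a\big) \le \nu(B^c)$ for every $(x,a) \in \Gamma$. Integrating against the probability measure $\gamma_n$ and using $\gamma_n(\Gamma) = 1$ gives $\int_\Gamma q(E \mid x,a)\, \gamma_n(d(x,a)) \le \nu(B^c)$ for all $n$. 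Combining this with the displayed convergence produces $\limsup_{n \to \infty} \hat \gamma_n(E) \le \nu(B^c)$, which is the first assertion.

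For the second assertion I would transfer the bound from the marginals $\hat \gamma_n$ to their weak limit $\bar p$. Because $\gamma_n \wto \bar \gamma$ and the projection $\proj_\X$ is continuous, the marginals converge weakly, $\hat \gamma_n \wto \bar p$. The set $E = O \cap D^c \cap B^c$ is open (as $O$ is open and $D, B$ are closed, a fact already noted after (\ref{eq-def-indfn})), so the Portmanteau theorem supplies the open-set inequality $\bar p(E) \le \liminf_{n \to \infty} \hat \gamma_n(E)$. Chaining this with the previous step, $\liminf_n \hat \gamma_n(E) \le \limsup_n \hat \gamma_n(E) \le \nu(B^c)$, completes the proof.

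I do not expect a genuine obstacle: the lemma is essentially bookkeeping that unites (\ref{eq-thm1-prf2a}) and (\ref{eq-maj-minpair}). The only points requiring care are directional. One must invoke the \emph{liminf} (open-set) form of Portmanteau for $\bar p$ rather than the closed-set form, which is exactly why it matters that each $E$ in (\ref{eq-def-indfn}) is open; and one must keep straight that majorization bounds the transition term while weak convergence (of the marginals) is what carries the resulting bound from the $\hat \gamma_n$ over to $\bar p$. Confirming $E$ is open and that $\hat \gamma_n \wto \bar p$ are the small checks that legitimize the Portmanteau step.
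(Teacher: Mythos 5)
Your proof is correct and follows essentially the same route as the paper's: substitute $h = \ind_E$ into (\ref{eq-thm1-prf2a}), bound the transition term by $\nu(B^c)$ via the majorization condition applied with the Borel set $B^c$, and then pass to $\bar p$ using $\hat\gamma_n \wto \bar p$ together with the open-set (liminf) form of the Portmanteau theorem, which is exactly the citation to \cite[Thm.~11.1.1]{Dud02} in the paper. No gaps.
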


\begin{proof}
For $(O, D, \nu, B) \in \W_1 \cup \W_2$, let $E = (O \setminus D) \cap B^c$ and since the indicator function 
$\ind_E \in \hat \Fn_b(\X)$, we have, by (\ref{eq-thm1-prf2a}) in Lemma~\ref{lem-net2seq}, that
$$ \epsilon_n : = \textstyle{\big| \, \hat \gamma_n(E) - \int_{\Gamma} q(E \mid x, a) \, \gamma_n(d(x,a)) \, \big|} \to 0.$$
We also have, by Assumption~\ref{cond-pc-3}(M),
$$ \int_{\Gamma} q(E \mid x, a) \,  \gamma_n(d(x,a)) \leq \int_{\Gamma} \nu(B^c) \, \gamma_n(d(x,a)) = \nu(B^c).$$
Hence $\hat \gamma_n ( E) \leq \nu(B^c) + \epsilon_n$ for all $n \geq 0$; consequently, $\limsup_{n \to \infty} \!\hat \gamma_{n} (E) \leq \nu(B^c)$.

Now $\hat \gamma_n \wto \bar p$ (since $\gamma_n \wto \bar \gamma$) and $E$ is an open set (since $O$ is open and $D, B$ are closed). Therefore, by \cite[Thm.~11.1.1]{Dud02} and the first part of the proof,
$\bar p(E) \leq \liminf_{n \to \infty} \!\hat \gamma_n (E) \leq \nu(B^c)$.
\end{proof}

\smallskip
\noindent {\bf Step (iv):} We are now ready to prove that $\big((1,\m0), \urho \big) \in H$.

\begin{lemma} \label{lem-sp-subv}
The pair $(\bar \mu, \bar p)$ is a stationary pair with $J(\bar \mu, \bar p) = \langle \bar \gamma, \, c \rangle \leq \urho$. 
\end{lemma}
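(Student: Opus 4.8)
I need to establish two things about the pair $(\bar\mu,\bar p)$ obtained in Step (iii) as the weak limit of the sequence $\{\gamma_n\}$: first, that $(\bar\mu,\bar p)$ is a genuine \emph{stationary} pair, i.e.\ that $\bar p$ is invariant for the Markov chain induced by $\bar\mu$ (equivalently, that $\bar\gamma$ satisfies the invariance equation~(\ref{eq-inv-gamma})); and second, the cost bound $\langle\bar\gamma,c\rangle\le\urho$. The cost bound is the easier half: since $\gamma_n\wto\bar\gamma$ and $c$ is nonnegative and strictly unbounded, I would use the standard lower-semicontinuity of $\gamma\mapsto\langle\gamma,c\rangle$ under weak convergence against $\liminf$, combined with~(\ref{eq-thm1-prf3a}) which gives $\langle\gamma_n,c\rangle\to\urho$. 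The subtlety is that $c$ itself need not be lower semicontinuous, so I cannot apply the portmanteau theorem to $c$ directly. Instead I would work through the truncations $c^m=\min\{c,m\}$ introduced before Lemma~\ref{lem-lusin}, and on each compact piece $\Gamma_j$ use Lemma~\ref{lem-lusin}(ii) (continuity of $c^m$ on $B^1_{j,m,\ell}\times F_j$) together with Lemma~\ref{lem-su-mp} to control the mass $\bar\gamma$ places where continuity fails; letting $\ell\to\infty$ and then $m\to\infty$ recovers $\langle\bar\gamma,c\rangle\le\liminf_n\langle\gamma_n,c\rangle=\urho$ via monotone convergence.

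\textbf{Establishing invariance.} The core of the lemma is showing that $\bar\gamma$ satisfies~(\ref{eq-inv-gamma}), i.e.\ that for all $h\in\Fn_b(\X)$,
\begin{equation} \label{eq-plan-inv}
  \int_\X h\,d\bar p \;=\; \int_\Gamma \int_\X h(y)\,q(dy\mid x,a)\,\bar\gamma(d(x,a)).
\end{equation}
By the determining-class property~(\ref{eq-detclass}), it suffices to verify this for every $h\in\hat\C_b(\X)$. For the left side, $\hat\gamma_n\wto\bar p$ gives $\int h\,d\hat\gamma_n\to\int h\,d\bar p$ directly since $h$ is bounded continuous. The difficulty is entirely in the right side: I must pass to the limit in $\int_\Gamma\int_\X h(y)\,q(dy\mid x,a)\,\gamma_n(d(x,a))$ and identify the limit with the corresponding integral against $\bar\gamma$. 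If $q(dy\mid\cdot,\cdot)$ were weakly continuous on all of $\Gamma$, the map $(x,a)\mapsto\int h\,dq(\cdot\mid x,a)$ would be bounded continuous and weak convergence $\gamma_n\wto\bar\gamma$ would finish it immediately. This is exactly where the lower-semicontinuous model assumption is avoided and where the real work lies.

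\textbf{The main obstacle and how Lusin's theorem resolves it.} The hard part is the discontinuity of $q$ in $(x,a)$: the integrand $(x,a)\mapsto\int_\X h(y)\,q(dy\mid x,a)$ is merely Borel measurable, so $\gamma_n\wto\bar\gamma$ does not by itself transfer. My plan is to localize to the compact sets $\Gamma_j$ (outside of which the tight family $\{\gamma_n\}$ and $\bar\gamma$ place vanishing mass as $j\to\infty$, by Assumption~\ref{cond-pc-3}(SU)), and on each $\Gamma_j$ split the state space using the closed sets $B^2_{j,\ell}$ from Lemma~\ref{lem-lusin}(ii), on which $q(dy\mid\cdot,a)$ \emph{is} continuous for each $a\in F_j$. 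On the ``good'' set $B^2_{j,\ell}\times F_j$ the integrand is continuous and weak convergence applies. On the ``bad'' remainder, the set where continuity fails is contained (after intersecting with the open set $O_j$ and removing the continuous part $D_j$) in sets of the form $(O_j\setminus D_j)\cap(B^2_{j,\ell})^c$; here Lemma~\ref{lem-su-mp} bounds both $\limsup_n\hat\gamma_n$ and $\bar p$ of such sets by $\nu_j\big((B^2_{j,\ell})^c\big)\le\ell^{-1}$, and the majorization inequality~(\ref{eq-maj-minpair}) in Assumption~\ref{cond-pc-3}(M) simultaneously bounds the contribution of $q(\cdot\mid x,a)$ over these bad sets uniformly in $(x,a)$. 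Thus the error from the discontinuous region is $O(\ell^{-1})$ uniformly in $n$, and on $D_j$ the asserted continuity of $q$ handles that piece directly. Taking $\ell\to\infty$ and $j\to\infty$ then yields~(\ref{eq-plan-inv}) for each $h\in\hat\C_b(\X)$, hence invariance of $\bar p$ under $\bar\mu$, which together with the cost bound completes the proof. I expect the bookkeeping of these three nested approximations ($\ell$, $j$, and the truncation $m$ for the cost) to be the most delicate part, with the interplay between Lusin continuity and the majorizing measures $\nu_j$ being the essential mechanism.
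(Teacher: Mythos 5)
Your proposal follows essentially the same route as the paper: reduce invariance to the countable determining class $\hat\C_b(\X)$ via (\ref{eq-detclass}), prove the cost bound through the truncations $c^m$, localize to the compact sets $\Gamma_j$ by tightness, and use the Lusin sets $B^1_{j,m,\ell}$, $B^2_{j,\ell}$ together with Lemma~\ref{lem-su-mp} and the majorization in Assumption~\ref{cond-pc-3}(M) to control the error on the sets where continuity fails. The only point where the paper is more careful than your sketch is that ``weak convergence applies on the good set'' is made rigorous not by restricting the integral to $B\times F$ (whose boundary may carry mass) but by extending the continuous restriction of $c^m$ and of $\phi(x,a)=\int h\,dq(\cdot\mid x,a)$ from the closed set $(D\cup B)\times F$ to the whole space via the Tietze--Urysohn theorem and then bounding the difference off that set.
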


\begin{proof}[Proof outline] 
We will only outline the proof, because the arguments for this lemma are essentially the same as those we used in an earlier work to prove the existence and pathwise optimality properties of stationary pairs~\cite[Sect.~4.1 and Sect.~4.3.1]{Yu19-minp}. 
By Lemma~\ref{lem-net2seq}, it suffices to prove the inequality
\begin{equation} \label{eq-lemdp-prf1}
   \langle \bar \gamma, \, c \rangle \leq \lim_{n \to \infty}  \langle \, \gamma_n, c \, \rangle  = \urho
\end{equation} 
and to prove that for all $h \in \hat \C_b(\X)$,
\begin{equation} \label{eq-lemdp-prf2}
   \lim_{n \to \infty}  \int_{\Gamma} \int_{\X} h (y) \, q(dy \mid x, a) \, \gamma_{n} (d(x,a)) = \int_{\Gamma} \int_{\X} h(y) \, q(dy \mid x, a) \, \bar \gamma (d(x,a)).
 \end{equation}
To see the sufficiency of (\ref{eq-lemdp-prf1}) and (\ref{eq-lemdp-prf2}), note that (\ref{eq-lemdp-prf2}), together with (\ref{eq-thm1-prf2a}) in Lemma~\ref{lem-net2seq} and the fact 
$\lim_{n \to \infty} \int h \, d \hat \gamma_n =  \int h \, d \bar p$ for all $h \in \hat \C_b(\X)$ (since $\hat \gamma_n \wto \bar p$),
will imply that
$$ \int_{\Gamma} \int_{\X} h(y) \, q(dy \mid x, a) \, \bar \gamma (d(x,a)) = \int_\X h \,  \bar p(dx), \qquad \forall \, h \in \hat \C_b(\X).$$
In turn, this will imply that $\bar p$ is identical to the probability measure $\int_{\Gamma} q(\cdot \mid x, a) \, \bar \gamma (d(x,a))$ (cf.~(\ref{eq-detclass})), thus proving that $\bar p$ is an invariant probability measure for the Markov chain induced by the policy $\bar \mu$ and hence $(\bar \mu, \bar p)$ is a stationary pair. Then the first relation (\ref{eq-lemdp-prf1}) will give us the desired inequality $J(\bar \mu, \bar p) = \langle \bar \gamma, \, c \rangle  \leq \urho$.

\medskip
\noindent {\bf Proving (\ref{eq-lemdp-prf1}):} The proof of (\ref{eq-lemdp-prf1}) is essentially the same as that given in \cite[Sect.~4, proofs of Lems.~4.3, 4.9]{Yu19-minp}. Below, we sketch the main proof arguments (see the proofs in \cite{Yu19-minp} for the details of each step):
\begin{enumerate}[leftmargin=0.7cm,labelwidth=!]
\item[1.] To show (\ref{eq-lemdp-prf1}), it suffices to show that for each $m \in \Z_+$, 
\begin{equation} \label{eq-lemdp-prf3}
   \int c^m  \, d \bar \gamma  \, \leq \, \liminf_{n \to \infty} \int c^m  \, d  \gamma_n.
\end{equation}   
(In the above, the probability measures $\bar \gamma$ and $\gamma_n$ are extended from $\Gamma$ to $\X \times \A$, and $c^m$ is the truncated one-stage cost function $\min \{c(\cdot), m\}$, as we recall.)
\item[2.] Fix $m$. To prove (\ref{eq-lemdp-prf3}), consider arbitrarily small $\epsilon = \delta = \ell^{-1}$, for some arbitrarily large $\ell \in \Z_+$. 
Assumption~\ref{cond-pc-3}(SU) together with (\ref{eq-thm1-prf3a}) in Lemma~\ref{lem-net2seq} allows us to choose $j \in \Z_+$ large enough so that for the compact set $\Gamma_j$ in Assumption~\ref{cond-pc-3}(SU), we have $\gamma_n(\Gamma_j^c) \leq \epsilon$ for all $n$ and $\bar \gamma(\Gamma_j^c) \leq \epsilon$. 
This in turn allows us to bound $\int_{\Gamma_j^c} c^m  d \gamma_n$ and $\int _{\Gamma_j^c} c^m  d \bar \gamma$ by $m  \epsilon$, an negligible term when we take $\epsilon \to 0$. Consequently, to prove (\ref{eq-lemdp-prf3}), we can focus on the integrals of $c^m$ on the compact set $\Gamma_j$ and on
bounding the difference
\begin{equation} \label{eq-lemdp-prf4}
   \int_{\Gamma_j} c^m  d \gamma_n - \int _{\Gamma_j} c^m  d \gamma.
\end{equation}    
 \item[3.] We now handle the term (\ref{eq-lemdp-prf4})---this is where we apply Lusin's theorem and the majorization property given in Assumption~\ref{cond-pc-3}(M). 
Corresponding to $\Gamma_j$, let us choose the element $(O, D, \nu, B) : = (O_j, D_j, \nu_j, B^1_{j,m,\ell}) \in \W_1$ (cf.\ the definition of the set $\W_1$ given in Step (ii)). By the definition of the set $B^1_{j,m,\ell}$ (cf.~Lemma~\ref{lem-lusin} in Step (ii)), the function $c^m$ is continuous on the closed set $B \times F$, where $F = \proj_\A(\Gamma_j)$, and $\nu(B^c) \leq \delta = \ell^{-1}$. 
We handle the continuous part of $c^m$ separately from the rest of $c^m$. 
Specifically, we first consider the restriction of $c^m$ to the closed set $(D \cup B) \times F$, which is a lower semicontinuous function on $(D \cup B) \times F$ in view of the property of $D$ given in Assumption~\ref{cond-pc-3}(M).
We apply the Tietze--Urysohn extension theorem \cite[Thm.~2.6.4]{Dud02} to extend this function to a function $\tilde c^m$ on the entire space $\X \times \A$ that is nonnegative, lower semicontinuous, and also bounded above by $m$. 
Since $\gamma_n \wto \bar \gamma$, by \cite[Prop.~E.2]{HL96},
$$ \liminf_{n \to \infty} \int \tilde c^m d \gamma_n \geq \int \tilde c^m \, d \bar \gamma.$$
We then handle the difference between $c^m$ and $\tilde c^m$. These two functions differ only outside the set $(D \cup B) \times F$. By using the fact $\nu(B^c) \leq \delta$ and $O \supset \proj_\X(\Gamma_j)$ (cf.\ Assumption~\ref{cond-pc-3}(M)), the majorization property given in Lemma~\ref{lem-su-mp}, and the bounds $\int_{\Gamma_j^c} c^m d \gamma_n \leq m \epsilon$, $\int _{\Gamma_j^c} c^m d \bar \gamma \leq m  \epsilon$ from Step 2, we can calculate that 
$$ \limsup_{n \to \infty} \left| \int_{\X \times \A} ( c^m - \tilde c^m)  d \gamma_n  \right| \leq m \, (\delta +\epsilon), \qquad \ \ \left| \int_{\X \times \A} ( c^m - \tilde c^m)  d \bar \gamma  \right| \leq m \, (\delta +\epsilon).$$
\item[4.] Finally, putting all the pieces together gives us the inequality
$$ \liminf_{n \to \infty} \int c^m \, d \gamma_{n} \geq \int c^m \, d \bar \gamma - 2m \, ( \delta + \epsilon).$$
By letting $\ell \to \infty$ so that $\delta, \epsilon \to 0$, the desired relation (\ref{eq-lemdp-prf3}) follows for all $m \in \Z_+$ and this implies (\ref{eq-lemdp-prf1}).
\end{enumerate} 

\medskip
\noindent {\bf Proving (\ref{eq-lemdp-prf2}):}
The proof of (\ref{eq-lemdp-prf2}) is similar to the above and essentially the same as that given in \cite[Sect.~4, proofs of Lems.~4.4,~4.10]{Yu19-minp}. We outline the main arguments below (see \cite{Yu19-minp} for detailed derivations):
\begin{enumerate}[leftmargin=0.7cm,labelwidth=!]
\item[1.] Consider an arbitrary $h \in \hat C_b(\X)$. Let $\epsilon = \delta = \ell^{-1}$, for some arbitrarily large $\ell \in \Z_+$.
Proceed as in Step 2 of the proof of (\ref{eq-lemdp-prf1}) to choose $j \in \Z_+$ large enough so that for the compact set $\Gamma_j$  in Assumption~\ref{cond-pc-3}(SU), we have
$\gamma_n(\Gamma_j^c) \leq \epsilon$ for all $n$ and $\bar \gamma(\Gamma_j^c) \leq \epsilon$. 
\item[2.] Define a function $\phi(x,a) : = \int_\X h(y) q(dy \, |\, x,a)$ on $\X \times \A$. 
Corresponding to the chosen $j$ and $\ell$, choose the element $(O,D,\nu,B) : = (O_j, D_j, \nu_j, B^2_{j,\ell}) \in \W_2$ and let $F : = \proj_\A(\Gamma_j)$. 
By the definition of the set $B^2_{j,\ell}$ (cf.~Lemma~\ref{lem-lusin} in Step (ii)), $\nu(B^c) \leq \delta = \ell^{-1}$ and on the closed set $B \times F$, $q(dy \,|\, \cdot, \cdot)$ is continuous. Then, since $q(dy \,|\, \cdot, \cdot)$ is also continuous on the closed set $D \times \A$ (cf.\ Assumption~\ref{cond-pc-3}(M)) and $h$ is a bounded continuous function, we have, by \cite[Prop.~7.30]{bs}, that the function $\phi$ is continuous on the closed set $(D \cup B) \times F$. We now treat the continuous part of $\phi$ separately: by the Tietze--Urysohn extension theorem \cite[Thm.~2.6.4]{Dud02}, the restriction of $\phi$ to $(D \cup B) \times F$ can be extended to a bounded continuous function $\tilde \phi$ on the entire space $\X \times \A$, with $\| \tilde \phi \|_\infty \leq \| \phi\|_\infty \leq \| h \|_\infty$. Since $\gamma_n \wto \bar \gamma$, we have
$$ \lim_{n \to \infty} \int \tilde \phi \, d \gamma_n = \int \tilde \phi \, d \bar \gamma.$$
We then handle the difference between $\phi$ and $\tilde \phi$. These two functions differ only outside the set $(D \cup B) \times F$. By using the fact $\nu(B^c) \leq \delta$ and $O \supset \proj_\X(\Gamma_j)$ (cf.\ Assumption~\ref{cond-pc-3}(M)), the majorization property given in Lemma~\ref{lem-su-mp}, and the bounds $\gamma_n(\Gamma_j^c) \leq \epsilon$, $\bar \gamma(\Gamma_j^c) \leq \epsilon$ from Step 1, we can calculate that
$$ \limsup_{n \to \infty} \left| \int_{\X \times \A} ( \phi - \tilde \phi ) \, d \gamma_n  \right| \leq 2 \|h\|_\infty \cdot (\delta +\epsilon), \qquad \ \ \left| \int_{\X \times \A} ( \phi - \tilde \phi ) \, d \bar \gamma  \right| \leq 2 \|h\|_\infty \cdot (\delta +\epsilon).$$
\item[3.] Finally, putting all the pieces together gives us the bound
$$ \limsup_{n \to \infty} \left| \int  \phi  \, d \gamma_n  -  \int \phi \, d \bar \gamma  \right| \leq 4 \|h\|_\infty \cdot (\delta +\epsilon).$$
By letting $\ell \to \infty$ so that $\delta, \epsilon \to 0$, the desired relation (\ref{eq-lemdp-prf2}) follows.
\end{enumerate}
%\smallskip

The lemma now follows from (\ref{eq-lemdp-prf1})-(\ref{eq-lemdp-prf2}), as discussed earlier.
\end{proof}
%\smallskip \smallskip

By Lemma~\ref{lem-sp-subv}, $\big((1,\m0), \, \urho \big) = \big(\L \bar \gamma, \, \langle \bar \gamma, \, c \rangle + \bar r \big)$ for 
$\bar r = \urho - \langle \bar \gamma, \, c \rangle  \geq 0$. Thus $\big((1,\m0), \, \urho \big) \in H$ and consequently, $\urho=\rho^*$. This completes the proof of Theorem~\ref{thm-1}.

\subsubsection{Proof of Prop.~\ref{prp-2}}

The proof is similar to that of \cite[Chap.~12.4B, Thm.~12.4.2(c)]{HL99}. 
Since $\{(\rho_n, h_n)\}$ is a maximizing sequence of (\Pstar), for all $n \geq 0$, $(\rho_n, h_n)$ is feasible for (\Pstar):
\begin{equation} \label{eq-prp2-prf4}
  \rho_n + h_n(x) \leq c(x,a) + \int_\X h_n(y) \, q(dy \mid x, a), \qquad \forall \, (x,a) \in \Gamma.
\end{equation} 
By assumption $\rho_n \uparrow \rho^*$ and for each $(x,a) \in \Gamma$, $\int_\X \sup_n | h_n(y) | \, q(dy \,|\, x, a) < + \infty$. The latter implies 
\begin{equation} 
  \limsup_{n \to \infty} \int_\X h_n (x) \, q(dy \mid x, a) \leq \int_\X \limsup_{n \to \infty} h_n (x) \, q(dy \mid x, a) < + \infty \notag
\end{equation}   
by Fatou's lemma. So, letting $n \to \infty$ and taking limit superior on both sides of (\ref{eq-prp2-prf4}), we obtain
\begin{equation} \label{eq-prp2-prf1}
   \rho^* + h^*(x) \leq c(x,a) + \int_\X h^*(y) \, q(dy \mid x, a) < + \infty, \qquad \forall \, (x,a) \in \Gamma,
\end{equation} 
which is the desired inequality and also shows that $h^*$ is finite everywhere.

Next, we prove the ACOE for $p^*$-a.a.\ states. 
Since $(\mu^*, p^*)$ is a stationary minimum pair and  $\int | h^*|  \, d p^* < \infty$ by assumption, we have
\begin{align*}
 \rho^* & = \int_\X \int_\A c(x,a) \, \mu^*(da \mid x) \, p^*(dx),   \\
  - \infty < \int_\X h^*(x) \, d p^* & = \int_\X \int_\A \int_\X h^*(y) \, q(dy \mid x, a) \, \mu^*(da \mid x) \, p^*(dx) < + \infty,
 \end{align*}
and hence
$$ \int_\X \int_\A \left\{ \rho^* + h^*(x) - c(x,a) - \int_\X h^*(y) \, q(dy \mid x, a) \right\} \, \mu^*(da \mid x) \, p^*(dx) = 0.$$
This together with (\ref{eq-prp2-prf1}) implies that for $p^*\!$-a.a.~$x \in \X$,
$$ \rho^* + h^*(x) - \int_\A \left\{ c(x,a) + \int_\X h^*(y) \, q(dy \mid x, a) \right\} \, \mu^*(da \mid x) = 0,$$
which in turn implies that for $p^*\!$-a.a.~$x \in \X$,
\begin{align}
   \rho^* + h^*(x) & = \int_\A \left\{ c(x,a) + \int_\X h^*(y) \, q(dy \mid x, a) \right\} \, \mu^*(da \mid x) \notag \\
       & \geq \inf_{a \in A(x)} \left\{ c(x,a) + \int_\X h^*(y) \, q(dy \mid x, a) \right\}. \label{eq-prp2-prf3}
\end{align}       
Then, by (\ref{eq-prp2-prf1}), equality must hold in (\ref{eq-prp2-prf3}), and this gives the desired ACOE (\ref{eq-ae-acoe}) and (\ref{eq-ae-acoe2}). 
The proof of Prop.~\ref{prp-2} is now complete.

\subsection{Proofs for Section~\ref{sec-4}} \label{sec-4.3}

\subsubsection{Proof of Theorem~\ref{thm-4.1} (Outline)}

The proof of Theorem~\ref{thm-4.1} is similar to that of Theorem~\ref{thm-2.1} on stationary minimum pairs for an unconstrained MDP. The latter proof is given in our prior work \cite[Sect.~4.1, proofs of Prop.~3.2 and Thm.~3.3]{Yu19-minp}, and its main arguments have already been explained earlier in the proof of Lemma~\ref{lem-sp-subv}. So we will only outline the proof of Theorem~\ref{thm-4.1}, in order to avoid repetition. We will first state some of our prior results for unconstrained MDPs. We will then directly apply them to the present case of constrained MDPs.

In \cite[Sect.~4.1]{Yu19-minp} we considered two kinds of sequences $\{\gamma_n\} \subset \P(\Gamma)$. In the first case, $\{\gamma_n\}$ are the occupancy measures of a policy $\pi$, for an initial distribution $\zeta$ that satisfies $J(\pi, \zeta) < \infty$: 
\begin{equation} \label{eq-thm4.1-prf0}
 \gamma_n(B) : = \tfrac{1}{n} \, \textstyle{\sum_{k=1}^n} \, \Pr^\pi_\zeta \big\{ (x_k, a_k) \in B \big\}, \qquad B \in \B(\Gamma).
\end{equation}
In the second case, $\{\gamma_n\}$ corresponds to a sequence of stationary pairs $(\mu_n, p_n)$ that satisfy \linebreak[4] $\sup_{n} J(\mu_n, p_n) < \infty$:
\begin{equation} \label{eq-thm4.1-prf2}
   \gamma_n (d(x,a)) : = \mu_n(da \,|\, x) \, p_n(dx).
\end{equation}
In both cases, $\sup_n \langle \gamma_n, \, c \rangle < \infty$, which, together with the strict unboundedness condition in Assumption~\ref{cond-pc-3}(SU), implies that (i) $\{\gamma_n\}$ is tight and for the compact sets $\Gamma_j$ in Assumption~\ref{cond-pc-3}(SU), as $j \to \infty$, $\gamma_n (\Gamma_j) \to 0$ uniformly in $n$; and (ii) a weakly convergent subsequence $\{\gamma_{n_k}\}$ can be extracted from any subsequence of $\{\gamma_n\}$: $\gamma_{n_k} \wto \bar \gamma \in \P(\Gamma)$. 
For both cases, the limiting probability measure $\bar \gamma$ is proved to have the following properties, by using (i)-(ii) and the majorization condition in 
Assumption~\ref{cond-pc-3}(M): 
\begin{itemize}[leftmargin=0.7cm,labelwidth=!]
\item[(a)] $\bar \gamma$ corresponds to a stationary pair $(\bar \mu, \bar p) \in \Delta_s$ (i.e., $\bar \gamma (d(x,a)) = \bar \mu(da \,|\, x) \, \bar p(dx)$). 
\item[(b)] The average cost of the pair $(\bar \mu, \bar p)$ satisfies
\begin{equation} \label{eq-thm4.1-prf1}
  J(\bar \mu, \bar p) = \langle \bar \gamma, \, c \rangle \leq \liminf_{k \to \infty} \, \langle \gamma_{n_k}, \, c \rangle.
\end{equation}  
\end{itemize}

We now explain how we can apply these results to prove Theorem~\ref{thm-4.1} for the constrained MDP. 
To prove Theorem~\ref{thm-4.1}(i), we consider $\{\gamma_n\}$ defined by (\ref{eq-thm4.1-prf0}) for a pair $(\pi, \zeta) \in \S$. 
By the feasibility of $(\pi, \zeta)$, its average costs are all finite: 
$$J_i(\pi,\zeta) = \limsup_{n \to \infty} \, \langle \gamma_n, \, c_i \rangle  < \infty, \qquad \forall \, i = 0, 1, \ldots, d.$$ 
Since at least one of the one-stage cost functions $c_0, c_1, \ldots, c_d$ is strictly unbounded by Assumption~\ref{cond-su-cmdp}(SU), 
this implies that $\{\gamma_n\}$ is a tight family of probability measures on $\Gamma$ and for the compact sets $\Gamma_j$ in Assumption~\ref{cond-su-cmdp}(SU), the convergence $\gamma_n(\Gamma_j) \to 0$ as $j \to \infty$ is uniform in $n$.
We then proceed as in the unconstrained case to obtain, from a weakly convergent subsequence $\{\gamma_{n_k}\}$ of $\{\gamma_n\}$, the limiting probability measure $\bar \gamma$. 
Next, using the majorization condition in Assumption~\ref{cond-su-cmdp}(M), it follows as before that $\bar \gamma$ has the property (a) given above and gives us a stationary pair $(\bar \mu, \bar p)$. Moreover, because Assumption~\ref{cond-su-cmdp}(M) is the same as Assumption~\ref{cond-pc-3}(M) holding for every one-stage cost function $c_i$ in the constrained MDP, (\ref{eq-thm4.1-prf1}) in the property (b) above now holds with the function $c$ replaced by every $c_i$; that is
$$   J_i(\bar \mu, \bar p) = \langle \bar \gamma, \, c_i \rangle \leq \liminf_{k \to \infty} \, \langle \gamma_{n_k}, \, c_i \rangle, \qquad \forall \, i = 0, 1, \ldots, d.$$ 
Since 
$J_i(\pi, \zeta) = \limsup_{n \to \infty} \langle \gamma_{n}, \, c_i \rangle \geq \liminf_{k \to \infty} \, \langle \gamma_{n_k}, \, c_i \rangle$,
it follows that
$$ J_i(\bar \mu, \bar p) \leq J_i(\pi, \zeta), \qquad \forall \, i = 0, 1, \ldots, d.$$
This proves Theorem~\ref{thm-4.1}(i). 

To prove Theorem~\ref{thm-4.1}(ii), which asserts the existence of a stationary optimal pair, we consider a sequence of stationary pairs $(\mu_n, p_n) \in \S$ with $J_0(\mu_n, p_n) \downarrow \rho_c^*$ (there exists such a sequence by the part (i) just proved).
Let $\gamma_n$ be defined as in (\ref{eq-thm4.1-prf2}). Then, since $J_i(\mu_n, p_n) = \langle \gamma_n, \, c_i \rangle$ for all $0 \leq i \leq d$, we have
$$  \sup_{n \geq 0} \, \langle \gamma_n, \, c_i \rangle < \infty, \qquad   \forall \, i = 0, 1, \ldots, d.$$
Since at least one of the functions $c_i$ is strictly unbounded under our assumption, as in the proof of the part (i), we can extract a weakly convergent subsequence $\{\gamma_{n_k}\}$ of $\{ \gamma_n\}$ and from its limiting probability measure $\gamma^*$, we can obtain a stationary pair $(\mu^*, p^*)$ such that for all $i = 0, 1, \ldots, d$,
\begin{equation} \label{eq-thm4.1-prf3}
 J_i(\mu^*, p^*) = \langle \gamma^*, \, c_i \rangle \leq \liminf_{k \to \infty} \, \langle \gamma_{n_k}, \, c_i \rangle. 
\end{equation} 
Since $\langle \gamma_{n_k}, \, c_i \rangle =J_i(\mu_{n_k}, p_{n_k})$ and $(\mu_{n_k}, p_{n_k})$ is feasible for the constrained problem, (\ref{eq-thm4.1-prf3}) implies
$$ J_0(\mu^*, p^*) \leq \rho_c^*, \qquad J_i(\mu^*, p^*) \leq \kappa_i, \qquad \forall \,  i = 1, 2, \ldots, d.$$
Hence $(\mu^*, p^*)$ is a stationary optimal pair for the constrained MDP.

We now prove Theorem~\ref{thm-4.1}(iii), which asserts the existence of a stationary lexicographically optimal pair. 
First, let us define recursively sets $\S^*_i$ and scalars $\kappa^*_i$ as follows:
Let 
$$  \kappa^*_0 : = \rho_c^*, \qquad \S^*_0 : = \big\{ (\pi, \zeta) \in \S \mid J_0(\pi, \zeta) = \rho_c^* \big\},  $$
and for $1 \leq i \leq d$, let
$$  \kappa^*_i : = \inf \big\{ J_i(\pi, \zeta) \, \big| \,  (\pi, \zeta) \in \S^*_{i-1} \big\}, \qquad  \S^*_i : = \big\{ (\pi, \zeta) \in \S^*_{i-1} \mid J_i(\pi, \zeta) = \kappa^*_i \big\}.$$
Then $\S \supset \S^*_0 \supset \S^*_1 \cdots \supset \S^*_d$ and $\S^*_d$ consists of all the lexicographically optimal pairs. So, to prove Theorem~\ref{thm-4.1}(iii), we need to show $\Delta_s \cap \S^*_d \not=\varnothing$. 
By Theorem~\ref{thm-4.1}(ii) just proved, $\Delta_s \cap \S^*_0 \not= \varnothing$. 
Let us prove by induction that $\Delta_s \cap \S^*_i \not= \varnothing$ for all $i \leq d$.

Assume that for some $j \leq d$, $\S^*_{j-1} \not= \varnothing$. Then $\kappa^*_j$ is well-defined,
and there exists a sequence of policy and initial distribution pairs $(\pi_n, \zeta_n) \in \S^*_{j-1}$ with
$$ J_j(\pi_n, \zeta_n) \downarrow \kappa^*_j.$$
By Theorem~\ref{thm-4.1}(i) proved earlier, for each $(\pi_n, \zeta_n)$, there is a stationary pair $(\mu_n, p_n)$ with
$$ J_i( \mu_n, p_n) \leq J_i(\pi_n, \zeta_n), \qquad \forall \, i = 0, 1, \ldots, d.$$
This together with the fact $(\pi_n, \zeta_n) \in \S^*_{j-1}$ implies that $(\mu_n, p_n) \in \S^*_{j-1}$. 
Consider now the sequence $\{(\mu_n, p_n)\}$ of stationary pairs thus constructed. Exactly the same proof arguments for establishing the part (ii) can be applied here, and they yield that there exists a stationary pair $(\mu^*, p^*)$ that satisfies (\ref{eq-thm4.1-prf3}). Therefore, 
$$ J_i(\mu^*, p^*)  = \kappa_i^*, \qquad i = 0, 1, \ldots, j,$$
and consequently, $(\mu^*, p^*) \in \S^*_j$. This proves $\Delta_s \cap \S^*_j \not= \varnothing$; then, by induction, $\Delta_s \cap \S^*_d \not= \varnothing$. Hence there is a stationary lexicographically optimal pair for the constrained MDP.

This completes the proof of Theorem~\ref{thm-4.1}.

\subsubsection{Proof of Theorem~\ref{thm-4.2} (Outline)}

The consistency and solvability of (P) follow from Theorem~\ref{thm-4.1}(i)-(ii). The consistency of (\Pstar) is trivial (e.g., let $\rho=0, h(\cdot) \equiv0$, $\beta = 0$). Thus, $0 \leq \sup (\Pstar) \leq \inf (\text{P}) = \rho_c^*$.

We now prove the absence of a duality gap. This proof is similar to that of Theorem~\ref{thm-1}(ii) for the unconstrained MDP case. Since the value of (\Pstar) is finite, by \cite[Thm.~3.3]{AnN87} (cf.\ Theorem~\ref{thm-subc}), the value of (\Pstar) equals the subvalue $\urho$ of (P). 
Therefore, to prove there is no duality gap is to prove $\urho = \rho_c^*$. For this, it suffices to show
$$\big((1, \m0, \kappa), \urho \big) \in H, $$
where the set $H$ is as defined in (\ref{eq-H}) and, for the case here, is given by
\begin{equation}
 H : = \big\{ \big(L (\gamma, \alpha), \, \langle \, \gamma, \, c_0 \, \rangle + r \big) \, \big| \, \gamma \in \Me_w^+(\Gamma), \ \alpha \in \R_+^d, \ r \geq 0 \big\}.
\end{equation}
Recall that by definition the subvalue $\urho = \inf \big\{ r \mid \big( (1, \m0, \kappa), r \big) \in \overbar{H} \big\}$ (cf.\ Section~\ref{sec-2.2}).

To prove $\big((1, \m0, \kappa), \urho \big) \in H$, we will construct a stationary pair $(\bar \mu, \bar p) \in \S$ with $J_0(\bar \mu, \bar p) \leq \urho$, and the proof proceeds in four steps as in the proof of Theorem~\ref{thm-1}(ii). 
Let us outline these steps, explaining briefly some minor changes in the details of the arguments.

\medskip
\noindent {\bf Step (i):} From the definition of $\urho$, it follows that $\big((1, \m0, \kappa), \urho \big) \in \overbar{H}$ and
there exist a direct set $\nI$ and a net $\{(\gamma_i, \alpha_i)\}_{i \in \nI}$ in $\Me_w^+(\Gamma) \times \R_+^d$ such that
\begin{align} 
    \gamma_i(\Gamma) & \to 1,   \label{eq-thm4.2-prf1} \\
  \int_\X h(x) \, \hat \gamma_i(dx) - \int_{\Gamma} \int_\X h(y) \, q(dy \mid x, a) \, \gamma_i(d(x,a)) & \to 0, \qquad \forall \, h \in \Fn_b(\X),  \label{eq-thm4.2-prf2} \\
\big( \langle \gamma_i, \, c_1 \rangle, \, \cdots, \, \langle \gamma_i, \, c_d \rangle \big) + \alpha_i & \to \kappa,  \label{eq-thm4.2-prf2b} \\
 \langle \, \gamma_i, c_0 \, \rangle & \to \urho.  \label{eq-thm4.2-prf3}
\end{align}
As before, in view of (\ref{eq-thm4.2-prf1}) and the fact $\gamma_i \in \Me_w^+(\Gamma)$, by redefining the net $\{(\gamma_i, \alpha_i)\}_{i \in \nI}$ if necessary, we may assume that every $\gamma_i$ in the above is a probability measure on $\B(\Gamma)$.

\medskip
\noindent {\bf Step (ii):} Similarly to Lemma~\ref{lem-net2seq}, we extract a \emph{sequence} $\{(\gamma_n, \alpha_n)\}_{n \geq 0} \subset \{(\gamma_i, \alpha_i)\}_{i \in \nI}$ such that
\begin{align}
  \int_\X h(x) \, \hat \gamma_n(dx) - \int_{\Gamma} \int_\X h(y) \, q(dy \mid x, a) \, \gamma_n(d(x,a)) & \to 0, \qquad \forall \, h \in \hat \C_b(\X) \cup \hat \Fn_b(\X),  \label{eq-thm4.2-prf2a} \\
  \big( \langle \gamma_n, \, c_1 \rangle, \, \cdots, \, \langle \gamma_n, \, c_d \rangle \big) + \alpha_n & \to \kappa,  \label{eq-thm4.2-prf2ab} \\
   \langle \, \gamma_n, c_0 \, \rangle  & \to \urho,  \label{eq-thm4.2-prf3a}
\end{align}
where $\hat \C_b(\X)$ and $\hat \Fn_b(\X)$ in (\ref{eq-thm4.2-prf2a}) are two chosen \emph{countable} subsets of $\Fn_b(\X)$, the properties of which are needed in the subsequent two steps of our proof. In particular, the set $\hat \C_b(\X)$ is the countable set of bounded continuous functions with the property (\ref{eq-detclass}), the same set as defined in the proof of Theorem~\ref{thm-1}(ii).
The countable set $\hat \Fn_b(\X)$ is also defined by the equation (\ref{eq-def-indfn}) in that proof:
$$  \hat \Fn_b(\X) : = \big\{ \ind_E(\cdot) \mid  E = (O \setminus D) \cap B^c \ \, \text{for some} \ \, (O, D, \nu, B) \in \W_1 \cup \W_2 \big\}.$$
However, while the set $\W_2$ is defined in the same way as before, we define the set $\W_1$ slightly differently here, to take into account the multiple one-stage cost functions in the constrained MDP. 
Specifically, in the definition of $\W_1$ (cf.\ Lemma~\ref{lem-lusin} and the definitions preceding this lemma), we make the following changes. We now use the sets and finite measures $(O,D,\nu)$ involved in Assumption~\ref{cond-su-cmdp}(M) instead of Assumption~\ref{cond-pc-3}(M). We choose the sets $B^1_{j,m,\ell}$ for each $j, m, \ell \in \Z_+$ such that besides the property in Lemma~\ref{lem-lusin}(i), we have that restricted to $B^1_{j,m,\ell} \times F_j$, \emph{all} the $(d+1)$ truncated one-stage cost functions, $c_i^m$, $i = 0, 1, \ldots, d$, are continuous (where $c_i^m(\cdot) = \min\{ c_i(\cdot), m\}$). This is possible by Lusin's theorem (since we have only a finite number of these cost functions, we can apply Lusin's theorem to each one of them and then combine the results).

\medskip
\noindent {\bf Step (iii):} This step is the same as before. The relations (\ref{eq-thm4.2-prf2ab})-(\ref{eq-thm4.2-prf3a}) together with Assumption~\ref{cond-su-cmdp}(SU) imply that $\{\gamma_n\}$ is a tight family of probability measures and therefore has a weakly convergent subsequence $\{\gamma_{n_k}\}$. Consider the corresponding subsequence $\{(\gamma_{n_k}, \alpha_{n_k})\}$; for notational simplicity, we will drop the subscript $k$ by redefining $\{(\gamma_n, \alpha_n)\}$ to be this subsequence.
Now, denote the limit of $\{\gamma_n\}$ by $\bar \gamma$, and decompose $\bar \gamma$ as $\bar \gamma(d(x,a)) = \bar \mu(da \,|\, x) \, \bar p(dx)$, where $\bar p$ is the marginal of $\bar \gamma$ on $\X$ and $\bar \mu$ is a stationary policy. Then, using Assumption~\ref{cond-su-cmdp}(M) instead of Assumption~\ref{cond-pc-3}(M), we have that Lemma~\ref{lem-su-mp} holds as before, which gives us the desired majorization properties for $\hat \gamma_n$ and $\bar p$ that we will need in the next, last step.

\medskip
\noindent {\bf Step (iv):} This step is almost the same as before, except that we apply those arguments in the proof of (\ref{eq-lemdp-prf1}) to \emph{every} cost function $c_i$, $0 \leq i \leq d$, in the present constrained problem. 
Then, similar to Lemma~\ref{lem-sp-subv}, we obtain that the pair $(\bar \mu, \bar p)$ is a stationary pair and satisfies that
$$ J_i(\bar \mu, \bar p)  = \langle \bar \gamma, \, c_i \rangle \leq \liminf_{n \to \infty} \, \langle \gamma_{n}, c_i \rangle, \qquad \forall \, i = 0, 1, \ldots, d.$$
Combining this with (\ref{eq-thm4.2-prf2ab}) and (\ref{eq-thm4.2-prf3a}) (recall also $\alpha_n \geq 0$), we obtain
$$ J_0(\bar \mu, \bar p) \leq \urho, \qquad J_i(\bar \mu, \bar p) \leq \kappa_i , \qquad \forall \, i = 0, 1, \ldots, d.$$
Therefore, if we let
$$ \bar r : = \urho - J_0(\bar \mu, \bar p) \geq 0, \qquad \bar \alpha : = \kappa - \big(J_1(\bar \mu, \bar p), \, \ldots, \, J_d(\bar \mu, \bar p) \big) \geq 0,$$
then 
$$ \big((1, \m0, \kappa), \urho \big) = \big( \L( \bar \gamma, \bar \alpha), \, \langle \bar \gamma, \, c_0 \rangle + \bar r \big) \in H.$$
This implies $\urho = \rho^*_c$ (since it implies $\rho^*_c \leq \urho$, whereas $\urho \leq \rho^*_c$). Hence there is no duality gap between (P) and (\Pstar).

\subsubsection{Proofs of Props.~\ref{prp-4.3} and \ref{prp-4.4}}

\smallskip
\begin{proof}[Proof of Prop.~\ref{prp-4.3}]
(i) Consider any $j \in \J^{(0)}$ and some pair $(\pi, \zeta) \in \S$ with $J_j(\pi, \zeta) < \kappa_j$.
By Theorem~\ref{thm-4.1}(i), there exists a stationary pair $(\bar \mu, \bar p) \in \S$ with $J_i(\bar \mu, \bar p) \leq J_i(\pi, \zeta)$ for all $0 \leq i \leq d$. 
Then $J_j(\bar \mu, \bar p) < \kappa_j$. 

Now for each $n \geq 0$, since $(\rho_n, h_n, \beta_n)$ is feasible for (\Pstar), we have from (\ref{eq-cmdp-dual-cstr1})-(\ref{eq-cmdp-dual-cstr2}) that $\beta_n \leq 0$ and for all $(x,a) \in \Gamma$,
$$ \rho_n + h_n(x) \leq c_0(x,a) - \sum_{i=1}^d \beta_{n,i} c_i(x,a) + \int_{\X} h_n(y) \, q(dy \mid x, a),$$
and therefore, by adding $\sum_{i=1}^d \beta_{n,i} \kappa_i$ to both sides,
\begin{equation} \label{eq-prp4.3-prf0}
 \rho_n +  \sum_{i=1}^d \beta_{n,i} \kappa_i + h_n(x) \leq  c_0(x,a) + \sum_{i=1}^d \beta_{n,i} \big( \kappa_i  - c_i(x,a) \big) + \int_{\X} h_n(y) \, q(dy \mid x, a).
\end{equation} 
Integrate both sides of (\ref{eq-prp4.3-prf0}) w.r.t.\ the probability measure $\bar \gamma(d(x,a)) = \bar \mu(da \,|\, x) \, \bar p(dx)$. 
Notice that $\int h_n \, d \bar p = \int_{\Gamma} \int_{\X} h_n(y) \, q(dy \,|\, x, a) \, \bar \gamma(d(x,a))$ since $(\bar \mu, \bar p)$ is a stationary pair. We thus obtain
\begin{equation}
 \rho_n + \sum_{i=1}^d \beta_{n,i} \kappa_i \leq J_0(\bar \mu, \bar p) + \sum_{i=1}^d \beta_{n,i} \big( \kappa_i - J_i(\bar \mu, \bar p) \big). \label{eq-prp4.3-prf0a}
\end{equation}
Take $n \to \infty$. Since $\{(\rho_n, h_n, \beta_n)\}$ is a maximizing sequence for (\Pstar), $\rho_n +  \sum_{i=1}^d \beta_{n,i} \kappa_i \to \rho^*_c$ by Theorem~\ref{thm-4.2}(ii). It then follows from (\ref{eq-prp4.3-prf0a}) that
\begin{align}
    \rho_c^* - J_0(\bar \mu, \bar p) & \leq \liminf_{n \to \infty} \, \sum_{i=1}^d \beta_{n,i} \big( \kappa_i - J_i(\bar \mu, \bar p) \big) \label{eq-prp4.3-prf1} \\
       &  \leq \liminf_{n \to \infty}  \, \beta_{n,j} \big( \kappa_j - J_j(\bar \mu, \bar p) \big), \label{eq-prp4.3-prf2}
\end{align}
where we used the fact $ \kappa_i - J_i(\bar \mu, \bar p) \geq 0$ and $\beta_{n,i} \leq 0$ for all $i$ to derive (\ref{eq-prp4.3-prf2}).
Since $\kappa_j - J_j(\bar \mu, \bar p) > 0$, (\ref{eq-prp4.3-prf2}) implies $\liminf_{n \to \infty} \beta_{n,j} > - \infty$. Hence the sequence $\{\beta_{n,j}\}_{n \geq 0}$ is bounded.

\medskip
\noindent (ii) In this case, suppose $j$ is such that $J_j(\mu^*, p^*) < \kappa_j$. Then $j \in \J^{(0)}$ and (\ref{eq-prp4.3-prf2}) holds with $(\bar \mu, \bar p) = (\mu^*, p^*)$ and with its left-hand side equal to $\rho_c^* - J_0(\mu^*, p^*) = 0$. This yields $\lim_{n \to \infty} \beta_{n,j} = 0$.

\medskip
\noindent (iii) In this case, by assumption there is some pair $(\bar \pi, \bar \zeta) \in \Pi \times \P(\X)$ satisfying 
$$ J_j(\bar \pi, \bar \zeta) < \kappa_j \quad \forall \, j \in \J^{(1)}, \qquad J_j(\bar \pi, \bar \zeta) < \infty \quad \forall \, j \in \J^{(0)} \cup \{ 0\}. $$
As in part (i), let us consider a stationary pair $(\bar \mu, \bar p)$ with $J_i(\bar \mu, \bar p) \leq J_i(\bar \pi, \bar \zeta)$ for all $0 \leq i \leq d$. 
Such a pair exists by Theorem~\ref{thm-4.1}(i), since we can apply this theorem with a different feasible set $\S'$ instead of $\S$ and in $\S'$ we can use $J_i(\bar \pi, \bar \zeta)$ as the upper limits on the average costs w.r.t.\ $c_i$ for $1 \leq i \leq d$, for instance. 

The average costs of this stationary pair $(\bar \mu, \bar p)$ thus satisfy
\begin{equation}
  J_j(\bar \mu, \bar p) < \kappa_j \quad \forall \, j \in \J^{(1)}, \qquad J_j(\bar \mu, \bar p) < \infty \quad \forall \, j \in \J^{(0)} \cup \{ 0\}. \label{eq-prp4.3-prf3}
\end{equation}  
We also have, as in part (i), that (\ref{eq-prp4.3-prf1}) holds for this pair $(\bar \mu, \bar p)$.
Now, as we proved in part (i), $\{\beta_{n,i}\}_{n \geq 0}$ is bounded for every $i \in \J^{(0)}$. This together with the second relation in (\ref{eq-prp4.3-prf3}) implies that the term
$$\limsup_{n \to \infty} \sum_{ i \in \J^{(0)}} \beta_{n,i} \big( \kappa_i - J_i(\bar \mu, \bar p) \big)$$ 
is finite. From (\ref{eq-prp4.3-prf1}), we have the inequality
\begin{align}
  \rho_c^* - J_0(\bar \mu, \bar p) & \leq \liminf_{n \to \infty} \, \sum_{i=1}^d \beta_{n,i} \big( \kappa_i - J_i(\bar \mu, \bar p) \big) \notag \\
        & \leq   \limsup_{n \to \infty} \sum_{ i \in \J^{(0)}} \beta_{n,i} \big( \kappa_i - J_i(\bar \mu, \bar p) \big)  + \, \liminf_{n \to \infty} \sum_{ i \in \J^{(1)}} \beta_{n,i} \big( \kappa_i - J_i(\bar \mu, \bar p) \big). \label{eq-prp4.3-prf4}
\end{align}
In (\ref{eq-prp4.3-prf4}), since the term on the left-hand side and the first term on the right-hand side are both finite, the second term on the right-hand side must satisfy
$$  \liminf_{n \to \infty} \sum_{ i \in \J^{(1)}} \beta_{n,i} \big( \kappa_i - J_i(\bar \mu, \bar p) \big) > - \infty. $$
Then, since $\beta_n \leq 0$, in view of the first relation in (\ref{eq-prp4.3-prf3}), the preceding inequality implies that $\{\beta_{n,i}\}_{n \geq 0}$ must be bounded for every $i \in \J^{(1)}$. Combining this with the result of part (i), we obtain that for every $i = 1, 2, \ldots, d$, the sequence $\{\beta_{n,i}\}_{n \geq 0}$ is bounded. Hence $\{\beta_n\}$ is bounded.
\end{proof}
%\smallskip \smallskip

\begin{proof}[Proof of Prop.~\ref{prp-4.4}]
The proof arguments are similar to those of \cite[Thm.~5.2(b)]{HGL03} for constrained MDPs and those of Prop.~\ref{prp-2} for unconstrained MDPs. 
By the feasibility of $\{(\rho_n, h_n, \beta_n)\}$ for (\Pstar), we have the inequality (\ref{eq-prp4.3-prf0}); that is, for each $n \geq 0$,
$$\quad \rho_n +  \sum_{i=1}^d \beta_{n,i} \kappa_i + h_n(x) \leq  c_0(x,a) + \sum_{i=1}^d \beta_{n,i} \big( \kappa_i  - c_i(x,a) \big) + \int_{\X} h_n(y) \, q(dy \mid x, a), \ \ \  \forall \, (x,a) \in \Gamma.$$
Let $n \to \infty$. Since $\rho_n + \sum_{i=1}^d \beta_{n,i} \kappa_i \uparrow \rho_c^* < \infty$ and $\beta_n \to \beta^* \leq 0$ by assumption, we obtain
$$ \rho_c^* + \limsup_{n \to \infty} h_n(x) \leq c_0(x,a) +  \sum_{i=1}^d \beta^*_{i} \big( \kappa_i  - c_i(x,a) \big) + \limsup_{n \to \infty} \int_{\X} h_n(y) \, q(dy \mid x, a), \ \ \ \forall \, (x,a) \in \Gamma.$$
For each $(x,a) \in \Gamma$, it follows from the assumption $\int_\X \sup_{n \geq 0} | h_n(y) | \, q(dy \,|\, x, a) < + \infty$ and Fatou's lemma that
\begin{equation} 
  \limsup_{n \to \infty} \int_{\X} h_n(y) \, q(dy \mid x, a) \leq \int_{\X} \limsup_{n \to \infty}  h_n(y) \, q(dy \mid x, a) < + \infty. \notag
\end{equation}  
Combining the preceding two relations gives us the desired inequality (\ref{eq-cmdp-dsol}):
\begin{equation} \label{eq-prp4.4-prf2}
  \rho^*_c + h^*(x) \leq c_0(x,a) + \sum_{i=1}^d \beta^*_{i} \big( \kappa_i  - c_i(x,a) \big) +  \int_\X h^*(y) \, q(dy \mid x, a), \qquad \forall \, (x,a) \in \Gamma,
\end{equation}
which also shows that $h^*$ is finite everywhere.

Next, corresponding to the stationary optimal pair $(\mu^*, p^*)$, let $\gamma^*(d(x,a)) = \mu^*(da \,|\, x) \, p^*(dx)$ and
integrate both sides of (\ref{eq-prp4.4-prf2}) w.r.t.\ the probability measure $\gamma^*$. As in the proof of Prop.~\ref{prp-2}, here the integrability is ensured by our assumption $\int |h^*| \, d p^* < \infty$ and the invariance property of $p^*$, which also imply that $- \infty < \int_\X h^*(x) \, d p^*  = \int_\Gamma \int_\X h^*(y) \, q(dy \,|\, x, a) \, \gamma^*\big(d(x,a)\big) < + \infty$. We thus obtain
$$ \rho_c^* \leq J_0(\mu^*, p^*) + \sum_{i=1}^d \beta^*_{i} \big( \kappa_i - J_i(\mu^*, p^*) \big).$$ 
But $J_0(\mu^*, p^*) = \rho_c^*$ and the second term in the right-hand side above is nonpositive, so equality must hold in the above inequality. This result can be equivalently expressed as
$$ \int_\X \int_\A \left\{  \rho^*_c + h^*(x) - c_0(x,a) - \sum_{i=1}^d \beta^*_{i} \big( \kappa_i  - c_i(x,a) \big) -  \int_\X h^*(y) \, q(dy \mid x, a) \right\} \mu^*(da \mid x) \, p^*(dx) = 0.$$
Similarly to the proof of Prop.~\ref{prp-2}, the preceding equality together with the inequality (\ref{eq-prp4.4-prf2}) implies that for $p^*\!$-a.a.\ $x \in \X$,
\begin{align*}
 \rho^*_c - \sum_{i=1}^d \beta^*_{i} \kappa_i +  h^*(x) & = \int_{a \in A(x)} \left\{ c_0(x,a) - \sum_{i=1}^d \beta^*_{i}  c_i(x,a) + \int_\X h^*(y) \, q(dy \mid x, a) \right\} \mu^*(da \mid x) \\
 & = \inf_{a \in A(x)} \left\{ c_0(x,a) - \sum_{i=1}^d \beta^*_{i}  c_i(x,a) + \int_\X h^*(y) \, q(dy \mid x, a) \right\}.
\end{align*} 
This gives the desired ACOE (\ref{eq-cmdp-ae-acoe}) and (\ref{eq-cmdp-ae-acoe2}).
\end{proof}
%\smallskip

\section*{Acknowledgments}
The author would like to thank Professor Eugene Feinberg and the anonymous reviewer for their comments that helped her improve the paper, and Dr.\ Martha Steenstrup for reading parts of the paper and giving her advice on improving the presentation. This research was supported by grants from DeepMind, Alberta Machine Intelligence Institute (AMII), and Alberta Innovates---Technology Futures (AITF).

\addcontentsline{toc}{section}{References} 
\bibliographystyle{apa} 
\let\oldbibliography\thebibliography
\renewcommand{\thebibliography}[1]{%
  \oldbibliography{#1}%
  \setlength{\itemsep}{0pt}%
}
{\fontsize{9}{11} \selectfont
\bibliography{minpair_lp_bib}}

\end{document}